\numberwithin{equation}{section}
\newtheorem{theorem}{Theorem}[section]
\newtheorem{atheo}{Theorem}
\newtheorem{lemma}[theorem]{Lemma}
\newtheorem{proposition}[theorem]{Proposition}
\newtheorem{corollary}[theorem]{Corollary}
\newtheorem{definition}[theorem]{Definition}
\newtheorem{remark}[theorem]{Remark}
\DeclareMathOperator{\Hess}{Hess}
\long\def\xcom#1{}
\newcommand{\cA}{{\ensuremath{\mathcal A}} }
\newcommand{\cB}{{\ensuremath{\mathcal B}} }
\newcommand{\cC}{{\ensuremath{\mathcal C}} }
\newcommand{\cD}{{\ensuremath{\mathcal D}} }
\newcommand{\cE}{{\ensuremath{\mathcal E}} }
\newcommand{\cH}{{\ensuremath{\mathcal H}} }
\newcommand{\cK}{{\ensuremath{\mathcal K}} }
\newcommand{\cL}{{\ensuremath{\mathcal L}} }
\newcommand{\cO}{{\ensuremath{\mathcal O}} }
\newcommand{\cS}{{\ensuremath{\mathcal S}} }
\newcommand{\cT}{{\ensuremath{\mathcal T}} }
\newcommand{\cU}{{\ensuremath{\mathcal U}} }
\newcommand{\cV}{{\ensuremath{\mathcal V}} }
\newcommand{\cW}{{\ensuremath{\mathcal W}} }
\newcommand{\gep}{\varepsilon}       
\renewcommand{\tilde}{\widetilde}          
\DeclareMathSymbol{\leqslant}{\mathalpha}{AMSa}{"36} 
\DeclareMathSymbol{\geqslant}{\mathalpha}{AMSa}{"3E} 
\DeclareMathSymbol{\eset}{\mathalpha}{AMSb}{"3F}     
\newcommand{\R}{\mathbb{R}}
\newcommand{\Z}{\mathbb{Z}}
\newcommand{\N}{\mathbb{N}}
\def\bs{\boldsymbol}
\newcommand\bP{\ensuremath{\bs{\mathrm{P}}}}
\newcommand\bE{\ensuremath{\bs{\mathrm{E}}}}
\renewcommand{\epsilon}{\varepsilon}
\renewcommand{\theta}{\vartheta}
\renewcommand{\phi}{\varphi}
\DeclareMathOperator\argmax{arg\, max}
\newenvironment{myitemize}{%
\begin{list}{$\bullet$}%
        {%
        \setlength{\itemsep}{0.4em}%
        \setlength{\topsep}{0.5em}%
        \setlength\leftmargin{2.45em}%
        \setlength\labelwidth{2.05em}%
        \setlength{\labelsep}{0.4em}%
        }%
        }%
{\end{list}}
\renewenvironment{itemize}{
\begin{myitemize}}%
{\end{myitemize}}
 \newcommand{\be}[1]{\begin{equation}\label{#1}}
 \newcommand{\ee}{\end{equation}}
 \newcommand{\bl}[1]{\begin{lemma}\label{#1}}
 \newcommand{\el}{\end{lemma}}
 \newcommand{\br}[1]{\begin{remark}\label{#1}}
 \newcommand{\er}{\end{remark}}
 \newcommand{\bt}[1]{\begin{theorem}\label{#1}}
 \newcommand{\et}{\end{theorem}}
 \newcommand{\bd}[1]{\begin{definition}\label{#1}}
 \newcommand{\ed}{\end{definition}}
 \newcommand{\bcl}[1]{\begin{claim}\label{#1}}
 \newcommand{\ecl}{\end{claim}}
 \newcommand{\bp}[1]{\begin{proposition}\label{#1}}
 \newcommand{\ep}{\end{proposition}}
 \newcommand{\bc}[1]{\begin{corollary}\label{#1}}
 \newcommand{\ec}{\end{corollary}}
 \newcommand{\bpr}{\begin{proof}}
 \newcommand{\epr}{\end{proof}}
 \newcommand{\bi}{\begin{itemize}}
 \newcommand{\ei}{\end{itemize}}
\newcommand{\Ll}{\mathfrak{L}}
\newcommand{\Llam}{\Ll_\Lambda}
\newcommand{\Llamn}{\Ll_{\Lambda_n}}
\newcommand{\ullamn}{\frac{1}{n}\Llamn}
\newcommand{\hbeta}{\mathfrak{h}_\beta}
\newcommand{\hnbeta}{\mathfrak{h}_{N,\beta}}
\def\etp#1{{\left ( {#1} \right )}}
\def\etc#1{{\left [ {#1} \right ]}}
\def\norme#1{{\left \Vert #1 \right \Vert}}
\newcommand\floor[1]{\lfloor#1\rfloor}
\author{Philippe Carmona}
\address{Laboratoire de Math\'ematiques Jean Leray UMR 6629,
Universit\'e de Nantes, 2 Rue de la Houssini\`ere,
BP 92208, F-44322 Nantes Cedex 03, France}
\email{philippe.carmona@univ-nantes.fr}
\author{Gia Bao Nguyen}
\address{Laboratoire de Math\'ematiques Jean Leray UMR 6629,
Universit\'e de Nantes, 2 Rue de la Houssini\`ere,
BP 92208, F-44322 Nantes Cedex 03, France}
\email{gia-bao.nguyen@univ-nantes.fr}
\author{Nicolas P\'etr\'elis}
\address{Laboratoire de Math\'ematiques Jean Leray UMR 6629,
Universit\'e de Nantes, 2 Rue de la Houssini\`ere,
BP 92208, F-44322 Nantes Cedex 03, France}
\email{nicolas.petrelis@univ-nantes.fr}
\keywords{Polymer collapse, phase transition, variational formula}
\subjclass[2010]{60K35, 82B26, 82B41}
\thanks{{\it Acknowledgments.} We are grateful to Remco van der Hofstad for fruitful discussions.}
\title[IPDSAW : phase transition and collapsed phase geometry]
{Interacting partially directed self avoiding walk. From phase
  transition to the geometry of the collapsed phase.}
\date{\today}
\newcommand{\valabs}[1]{\left|#1\right|}
\begin{document}

\begin{abstract}
In this paper, we investigate a model for a $1+1$ dimensional
self-interacting and partially directed self-avoiding walk, usually
referred to by the acronym IPDSAW. The interaction intensity and the free energy of the system are denoted by $\beta$ and $f$, respectively. The IPDSAW is known to undergo a collapse transition at $\beta_c$.   We provide the precise asymptotic of the free energy close to criticality, that is we show that $f(\beta_c-\gep)\sim \gamma \gep^{3/2}$ where $\gamma$ is computed explicitly and interpreted in terms of an associated continuous model.
We also establish some path properties of the 
random walk inside the collapsed phase $(\beta>\beta_c)$. We prove
that the geometric conformation adopted by the polymer is made of a
succession of long vertical stretches that attract each other to form
a unique macroscopic bead and we 
establish the
convergence of the region occupied by the path properly rescaled towards a deterministic Wulff shape.
\end{abstract}
\maketitle

\tableofcontents

\section{Introduction}

\subsection{Model and physical insight}\label{s11}

A solvent is said to be "poor" for a given homopolymer if the chemical affinity between the solvent and the monomers constituting the homopolymer is low.  When dipped in such a solvent, the homopolymer folds itself up to exclude the solvent and therefore adopts a collapsed conformation, that looks like a compact ball. If the quality of the solvent improves, the chemical affinity raises until it reaches a threshold above which the polymer extends itself in such a way that a positive fraction of its monomers are in contact with the solvent.

The interacting partially directed self-avoiding walk (IPDSAW) was introduced in \cite{ZL68} as a partially directed model of an homopolymer in a poor solvent. The spatial configurations of the polymer of length $L$ ($L$ monomers) are modeled by the trajectories of a \emph{self-avoiding} random walk on $\mathbb{Z}^2$ that only takes unitary steps \emph{upwards, downwards and to the right}. Thus, the set of allowed $L$-step paths is
\begin{align}\label{defWL}
\nonumber\mathcal{W}_L=\{w=(w_i)_{i=0}^L\in(\mathbb{N}_0\times\mathbb{Z})^{L+1}:\,&w_0=0,\, w_L-w_{L-1}=\rightarrow,\\
\nonumber&w_{i+1}-w_i\in\{\uparrow,\downarrow,\rightarrow\}\;\, \forall 0\leq i<L-1,\\
\nonumber&w_i\neq w_j\;\,\forall i<j\}.
\end{align}
Note that the choice of $w$ ending with an horizontal step is made for convenience only. We consider two different a priori laws on $\mathcal{W}_L$, uniform and non-uniform.\\
\\
(1) The uniform model: all $L$-step paths have the same probability, i.e.,
\begin{equation}\label{defPP}
\mathbf{P}_L(w)=\frac{1}{|\mathcal{W}_L|},\quad w\in\mathcal{W}_L.
\end{equation}

\noindent (2) The non-uniform model: the $L$-step paths have the following law
\begin{itemize}
  \item At the origin or after an horizontal step: the walker must step north, south or east with equal probability $1/3$.
  \item After a vertical step north (respectively south): the walker must step north (respectively south) or east with probability $1/2$.
\end{itemize}

Henceforth, we will focus on the uniform model since all our results can be adapted straightforwardly to the non-uniform model 
modulo a shift in the critical point $\beta_c$ and in the value of the
constant $a_\beta$ defined before the Shape Theorem.

\begin{figure}[ht]
	\includegraphics[width=.5\textwidth]{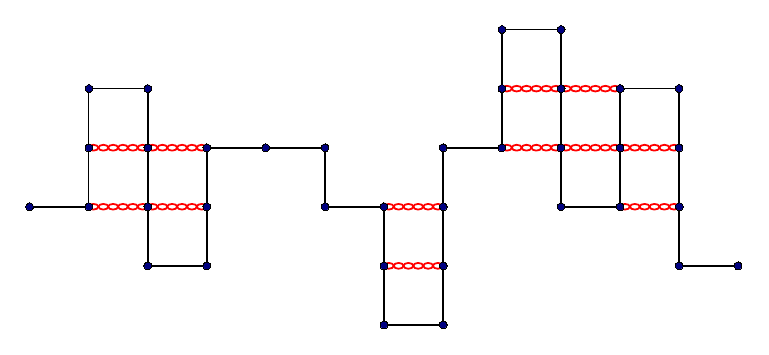}
	\caption{Example of a trajectory with 12 self-touchings in light grey.}
	\label{fig:self}
\end{figure}

The monomer-solvent interactions are not taken into account directly in the IPDSAW. We rather consider that, when dipped in a poor solvent, the monomers try to exclude the solvent and therefore attract one another. For this reason, any non-consecutive vertices of the walk though adjacent on the lattice are called \textit{self-touchings} (see Fig.~\ref{fig:self}) and the interactions between monomers are taken into account by assigning an energetic reward $\beta\geq0$ to the polymer for each self-touching (consequently, a lower chemical affinity corresponds to a larger $\beta$). Thus, we associate with every random walk trajectory $w=(w_i)_{i=0}^L\in\mathcal{W}_L$ the Hamiltonian 
\begin{equation}\label{eq:Hal}
H_{L}(w):=\sum_{\substack{i,j=0\\i<j-1}}^L\mathbf{1}_{\{\lVert w_i-w_j\rVert=1\}},
\end{equation}
which allows to define the law $P_{L,\beta}$ of the polymer in size $L$ as, 
\begin{equation}
P_{L,\beta}(w)= \frac{ e^{\beta H_{L,\beta}(w)}}{Z_{L,\beta}} 
\mathbf{P}_L(w),
\end{equation}
where $Z_{L,\beta}$ is the normalizing constant known as the partition function of the system. Henceforth,  we remove the term $1/|\cW_L|$ from the definition of  $\mathbf{P}_L$ (recall \eqref{defPP}) and from the computation of the partition function $Z_{L,\beta}$. Although $\mathbf{P}_L$ is not a probability law anymore, the latter simplification is harmless, because it does not change the polymer law $P_{L,\beta}$ and because it only induces a constant shift of the free energy $f(\beta)$ introduced in Section \ref{free} below.


\subsubsection{From random walk paths to vertical stretches}

It is easy to see that any path in $\cW_L$ can be decomposed into a collection of vertical stretches separated by one horizontal step. Thus, we set 
$\Omega_L:=\bigcup_{N=1}^L\mathcal{L}_{N,L}$, where $\mathcal{L}_{N,L}$ is the set of all possible configurations consisting of $N$ vertical stretches that have a total length $L$, that is
\begin{equation}\label{defLL}
\textstyle\mathcal{L}_{N,L}=\Bigl\{l\in\mathbb{Z}^N:\sum_{n=1}^N|l_n|+N=L\Bigr\}.
\end{equation}
We build the natural one to one correspondence between $\Omega_L$ and $\cW_L$ 
by associating with a given $l\in \Omega_L$ the path of $\cW_L$ that starts at $0$,
takes $|l_1|$ vertical steps north if $l_1>0$ and south if $l_1<0$, then takes one horizontal step, then takes $|l_2|$ vertical steps north if $l_2>0$ and south if $l_2<0$ then takes one horizontal step and so on...  (see Fig. \ref{fig:stretches}).
The Hamiltonian associated with a given path of $\cW_L$ can be rewritten in terms of its associated collection of vertical stretches $l\in \Omega_L$ as
\begin{equation}
\textstyle H_{L}(l_1,\ldots,l_N)=\sum_{n=1}^{N-1}(l_n\;\tilde{\wedge}\;l_{n+1})
\end{equation}
where
\begin{equation}
x\;\tilde{\wedge}\;y=\begin{dcases*}
	|x|\wedge|y| & if $xy<0$,\\
  0 & otherwise.
  \end{dcases*}
\end{equation}
Therefore, the partition function can be rewritten under the form

\begin{equation}\label{pff}
Z_{L,\beta}=\sum_{N=1}^{L}\sum_{l\in\mathcal{L}_{N,L}} \   e^{\beta\sum_{i=1}^{N-1}(l_i\;\tilde{\wedge}\;l_{i+1})}.
\end{equation}

\begin{figure}[ht]\center
	\includegraphics[width=.3\textwidth]{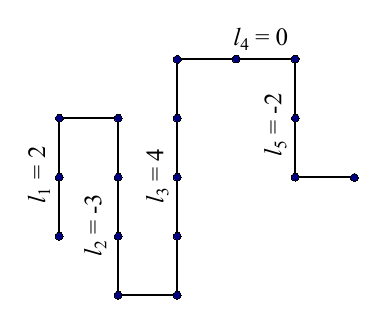}
	\caption{Example of a trajectory with $N=5$ vertical stretches
          and length $L=16$.}
	\label{fig:stretches}
\end{figure}

\subsection{Free energy and collapse transition}\label{free}
The sequence $\{\log Z_{L,\beta}\}_L$ is super-additive and the Hamiltonian in \eqref{eq:Hal} is obviously bounded from above by $\beta L$. As a consequence, we can define the free energy per step $f:(0,\infty)\to\mathbb{R}$ as 
\begin{equation}
f(\beta)=\lim_{L\to\infty}\frac{1}{L}\log Z_{L,\beta}=\sup_{L\in\mathbb{N}}\frac{1}{L}\log Z_{L,\beta}\leq\beta.
\end{equation}

The collapse transition corresponds to a loss of analyticity of $\beta \mapsto f(\beta)$ at some critical parameter $\beta_c\in (0,\infty)$ above which the density of self-touchings performed by the polymer equals $1$. In this collapsed phase, the expression of the free energy per step is rather simple, i.e., $\beta+\kappa$, where $\kappa$ is the entropic constant associated to those trajectories in $\cW_L$
whose self-touching density is equal to $1+o(1)$. To achieve such a saturation of its self-touching, the  polymer must choose its configuration among those satisfying two major 
geometric restrictions, i.e.,
\begin{itemize}\label{2cond}
\item the number of horizontal steps is $o(L)$
\item most pairs of consecutive vertical stretches are of opposite directions.
\end{itemize}
It turns out that an appropriate choice of a trajectory satisfying both restrictions above  is sufficient to exhibit the collapsed free energy.  To that aim, we pick $L\in \N\colon \sqrt{L}\in \N$ and consider the trajectory $l^{*}\in \cL_{\sqrt{L},L}$ defined as $l^{*}_i=(-1)^{i-1} (\sqrt{L}-1)$ for $i\in \{1,\dots,\sqrt{L}\}$. By computing the contribution of  $l^*$ to $Z_{L,\beta}$ 
one  immediately obtain that\footnote{In a previous paper~\cite{NGP13} the authors obtained
  the lower bound of $f(\beta)\geq \beta-\log(1+\sqrt{2})$. The
  difference comes from the omission of the normalizing factor $1/\valabs{\cW_L}$.}, 
for $\beta>0$,
\begin{equation}\label{le1*}
f(\beta)\geq \beta.
\end{equation}
At this stage, we can define the \textit{excess free energy} $\tilde{f}(\beta):=f(\beta)-\beta$, which is always non negative by \eqref{le1*}.  We define the critical parameter  
\begin{equation}
\beta_c:=\inf\{\beta\ge0:\tilde{f}(\beta)=0\},
\end{equation}
and the convexity of $\beta\mapsto \tilde{f}(\beta)$  allows us to partition $[0,\infty)$ into a collapsed phase denoted by $\mathcal{C}$ and an extended phase denoted by $\mathcal{E}$, i.e,
\begin{equation}\label{eq:colphase}
\mathcal{C}:=\{\beta:\tilde{f}(\beta)=0\}=\{\beta:\beta\geq\beta_c\}
\end{equation}
and
\begin{equation}\label{eq:extphase}
\mathcal{E}:=\{\beta:\tilde{f}(\beta)>0\}=\{\beta:\beta<\beta_c\}.
\end{equation}

\subsection{Main results}\label{maires}
The main results of this paper are Theorems A,B,C,D,E and F. Theorems A and B are dedicated to the investigation of the phase transition while the path properties of the polymer inside its collapsed phase are studied with Theorems C,D,E and F.

Before stating the Theorems we need to introduce $\mathbf{P}_{\beta}$ the law of an auxiliary symmetric random walk $V:=(V_n)_{n\in\mathbb{N}}$ with geometric increments, i.e.,
$V_0=0$, $V_n=\sum_{i=1}^n U_i$ for $n\in\mathbb{N}$ and $(U_i)_{i\in\mathbb{N}}$ is an i.i.d sequence under the law $\mathbf{P}_{\beta}$, with distribution 
\begin{equation}\label{lawP}
\mathbf{P}_{\beta}(U_1=k)=\tfrac{e^{-\frac{\beta}{2}|k|}}{c_{\beta}}\quad\forall k\in\mathbb{Z}\quad\text{with}\quad c_{\beta}:=\tfrac{1+e^{-\beta/2}}{1-e^{-\beta/2}}.
\end{equation}
Then,  for $\delta \geq 0$  we set

 \begin{equation}\label{eq:funch}
\hbeta(\delta):=\lim_{N\to\infty}\frac{1}{N}\log\mathbf{E}_\beta\bigl(e^{-\delta A_N(V)}\bigr),
\end{equation}
where $A_N(V):=\sum_{i=1}^N |V_i|$ gives the geometric area below the $V$ trajectory after $N$ steps. We will prove in Section \ref{sec:chbeta} below that the limit in \eqref{eq:funch} exists and that $\delta\mapsto \hbeta(\delta)$ is non-positive, non-increasing and continuous on $[0,\infty)$.
We finally define  $\Gamma(\beta)$ an energetic term of crucial importance as
\begin{equation}\label{sqq}	
	\Gamma(\beta)=\tfrac{c_\beta}{e^\beta},
\end{equation}
and we will see for instance in \eqref{eq:partfunc} below  that $\Gamma(\beta)$  penalizes the horizontal steps when it is smaller than $1$ and favors them when it is larger than $1$.

\subsubsection{\bf A sharper asymptotic of the free energy close to criticality}

With Theorem A,  we  give a new expression of the excess free energy. 
\begin{atheo}[Free energy equation]\label{Thm3}
The excess free energy $\tilde{f}(\beta)$ is the unique solution of the equation $\log(\Gamma(\beta))-\delta+\hbeta(\delta)=0$ if such a solution exists and $\tilde{f}(\beta)=0$ otherwise.
\end{atheo}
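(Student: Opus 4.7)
The plan is to derive a closed-form representation of $Z^\mathsf{m}_{L,\beta}$ in terms of the auxiliary walk $V$, and then extract the free-energy equation through a Laplace-transform argument. First I would rewrite the Hamiltonian via the pointwise identity
$$l_i\;\tilde{\wedge}\;l_{i+1}=\tfrac12\bigl(|l_i|+|l_{i+1}|-|l_i+l_{i+1}|\bigr),$$
which, combined with the constraint $\sum_{i=1}^N|l_i|=L-N$ valid on $\mathcal{L}_{N,L}$, gives
$$\beta H_{L,\beta}(l)=\beta(L-N)-\tfrac{\beta}{2}\Bigl(|l_1|+|l_N|+\sum_{i=1}^{N-1}|l_i+l_{i+1}|\Bigr).$$
Next I introduce the bijection $\Phi\colon l\mapsto\xi$ defined by $\xi_0=l_1$, $\xi_i=l_i+l_{i+1}$ for $1\le i\le N-1$ and $\xi_N=l_N$. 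A direct recursion shows $\Phi$ maps $\mathcal{L}_{N,L}$ onto $\bigl\{\xi\in\mathbb{Z}^{N+1}:\sum_{i=0}^N(-1)^i\xi_i=0\bigr\}$, with $|l_1|+|l_N|+\sum|l_i+l_{i+1}|=\sum_{i=0}^N|\xi_i|$. Setting $V_j:=\sum_{i=0}^{j-1}(-1)^i\xi_i$, the symmetry of $\mathbf{P}_\beta$ makes $V$ (under $\mathbf{P}_\beta^{\otimes(N+1)}$ on $\xi$) a random walk with i.i.d.\ $\mathbf{P}_\beta$ increments, while $l_j=(-1)^{j-1}V_j$, hence $|l_j|=|V_j|$, the image constraint reads $V_{N+1}=0$ and the length constraint becomes $A_N(V)=L-N$. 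Gathering the $\mathbf{P}^\mathsf{m}_L$ prefactor, the factor $e^{\beta(L-N)}$ and $c_\beta^{N+1}$ produces one $\Gamma^\mathsf{m}(\beta)$ per stretch, yielding
\begin{equation}\label{eq:planrep}
Z^\mathsf{m}_{L,\beta}=c_\beta\,e^{\phi^\mathsf{m}_\beta L}\sum_{N=1}^{L}\Gamma^\mathsf{m}(\beta)^N\,\mathbf{P}_\beta\bigl(V_{N+1}=0,\ A_N(V)=L-N\bigr).
\end{equation}

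I would then use the characterization $f^\mathsf{m}(\beta)=\inf\{\delta:\sum_L e^{-\delta L}Z^\mathsf{m}_{L,\beta}<\infty\}$. Setting $\eta:=\delta-\phi^\mathsf{m}_\beta$, Fubini applied to \eqref{eq:planrep} turns this Laplace transform into
$$\sum_{L}e^{-\delta L}Z^\mathsf{m}_{L,\beta}=c_\beta\sum_{N\ge 1}\bigl[\Gamma^\mathsf{m}(\beta)e^{-\eta}\bigr]^N\,\mathbf{E}_\beta\bigl[e^{-\eta A_N(V)}\mathbf{1}_{\{V_{N+1}=0\}}\bigr].$$
The key random-walk input will then be
$$\lim_{N\to\infty}\tfrac{1}{N}\log\mathbf{E}_\beta\bigl[e^{-\eta A_N(V)}\mathbf{1}_{\{V_{N+1}=0\}}\bigr]=h_\beta(\eta),$$
so that the above series converges exactly when $\log\Gamma^\mathsf{m}(\beta)-\eta+h_\beta(\eta)<0$. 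The upper bound of this rate is immediate because the indicator is $\le 1$; for the lower bound I would tilt $V$ by $e^{-\eta A_N(V)}$ and show, via a local central limit theorem (or equivalently a renewal construction at the level of excursions of $V$) for the area-tilted walk, that conditioning on $V_{N+1}=0$ costs only a polynomial factor in $N$.

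With this rate in hand,
$$\tilde f^\mathsf{m}(\beta)=\inf\bigl\{\eta\ge 0:\log\Gamma^\mathsf{m}(\beta)-\eta+h_\beta(\eta)\le 0\bigr\}.$$
Because $h_\beta$ is continuous, non-increasing and non-positive with $h_\beta(0)=0$, the map $\eta\mapsto\log\Gamma^\mathsf{m}(\beta)-\eta+h_\beta(\eta)$ is continuous and strictly decreasing from $\log\Gamma^\mathsf{m}(\beta)$ at $\eta=0$ to $-\infty$ at $\eta=+\infty$. Hence the above infimum is either the unique root of $\log\Gamma^\mathsf{m}(\beta)-\eta+h_\beta(\eta)=0$ (when $\log\Gamma^\mathsf{m}(\beta)\ge 0$, so that a non-negative root exists) or equals $0$ while no root exists (when $\log\Gamma^\mathsf{m}(\beta)<0$). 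This is exactly the dichotomy of Theorem~A. The hardest step will be the lower bound in the Laplace-rate identity above: propagating the rate $h_\beta(\eta)$ through the bridge constraint $V_{N+1}=0$ requires sharper local control of the area-tilted random walk than comes for free from the definition of $h_\beta$.
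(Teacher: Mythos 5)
Your overall line of reasoning matches the paper's: rewrite the partition function via the auxiliary walk $V$, characterize $\tilde f^\mathsf{m}$ through the radius of convergence of $\sum_L \tilde Z^\mathsf{m}_{L,\beta}z^L$, apply Fubini to collapse the double sum into $\sum_N (\Gamma^\mathsf{m}(\beta)e^{-\delta})^N \mathbf{E}_\beta[e^{-\delta A_N}\mathbf{1}_{\{V_{N+1}=0\}}]$, and then read off the threshold condition from $h_\beta$ together with its monotonicity and continuity at $0$. Your algebra in deriving the representation (the $\xi_i = l_i+l_{i+1}$ bijection, the factorization producing $\Gamma^\mathsf{m}(\beta)^N$) is a valid rewrite of what the paper does in Section~2.1; that part is essentially the paper's \eqref{eq:partfunc}.

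The one place where you propose a genuinely different (and heavier) tool is the step you yourself flag as hardest: identifying $\lim_N \tfrac1N\log\mathbf{E}_\beta[e^{-\eta A_N}\mathbf{1}_{\{V_{N+1}=0\}}]$ with $h_\beta(\eta)$. You plan to prove the lower bound by a local CLT (or excursion renewal) for the area-tilted walk, arguing that the bridge constraint only costs a polynomial factor. That would work, but the paper sidesteps it entirely. The paper \emph{defines} $h_\beta$ via the bridge version $h_{N,\beta}(\delta)=\tfrac1N\log\mathbf{E}_\beta(e^{-\delta A_N}\mathbf{1}_{\{V_N=0\}})$, observes that $N\mapsto Nh_{N,\beta}(\delta)$ is super-additive (Markov property at $V_N=0$), and so gets both existence of the limit and the sup-characterization $h_\beta(\delta)=\sup_N h_{N,\beta}(\delta)$ for free (Lemma~\ref{funch}). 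That sup-characterization is exactly what makes the dichotomy ``series diverges iff $\log\Gamma^\mathsf{m}(\beta)-\delta+h_\beta(\delta)>0$'' immediate in the proof of Theorem~\ref{Thm3}. The equivalence between this bridge rate and the unconditional rate $\lim_N\tfrac1N\log\mathbf{E}_\beta(e^{-\delta A_N})$ is proved separately in Section~2.2, and even there the argument is soft — a truncation to $|V_N|\le N^2$, a Markov/time-reversal doubling inequality $\mathbf{E}_\beta(e^{-\delta A_{2N}}\mathbf{1}_{\{V_{2N}=0\}})\ge[\sup_x\mathbf{E}_\beta(e^{-\delta A_N}\mathbf{1}_{\{V_N=x\}})]^2$, and a polynomial cardinality bound — no local CLT or renewal machinery. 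So your approach is sound and essentially the same in structure, but the ``sharper local control of the area-tilted random walk'' you anticipate needing is avoidable: starting from the with-bridge definition of $h_\beta$ and using super-additivity plus time reversal makes the key step elementary.
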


Note  that Theorem \ref{Thm3} and the obvious equality $\hbeta(0)=0$ are sufficient to check that the critical parameter $\beta_c$ is 
the unique solution of $\Gamma(\beta)=1$.  
One of the main interest of Theorem \ref{Thm3} is that it allows us to use the analytic properties of 
$\delta\mapsto \hbeta(\delta)$ at $0^+$ to investigate the regularity of $\beta\mapsto \tilde{f}(\beta)$ at $\beta_c$.

\begin{atheo}[Phase transition asymptotics]\label{Thm2}
The phase transition is second order with critical exponent $3/2$ and the first order assymptotic of the excess free energy at $(\beta_c)^-$ is given by
\begin{equation}
\lim_{\gep\to 0^+}\frac{\tilde{f}(\beta_c-\epsilon)}{\gep^{3/2}}=\Big(\frac{\varsigma_1}{\varsigma_2}\Big)^{3/2},
\end{equation}
where
\begin{equation}
\varsigma_1=1+\tfrac{e^{-\beta_c/2}}{1-e^{-\beta_c}},
\end{equation}
and where
\be{c1}
\varsigma_2=-\lim_{T\to \infty}\frac{1}{T}\log\mathbf{E}\bigr(e^{-\sigma_{\beta_c}\int_0^T|B(t)|dt}\bigr)=2^{-1/3}|a'_1|\sigma_{\beta_c}^{2/3},
\ee
with $\sigma_\beta^2=\mathbf{E}_\beta(U_1^2)$, with $a'_1$ the smallest zero (in absolute value) of  the first derivative of the Airy function and with $(B_s)_{s\in [0,\infty)}$ a standard Brownian motion.
\end{atheo}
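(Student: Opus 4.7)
The plan is to derive Theorem \ref{Thm2} from the implicit free-energy equation provided by Theorem \ref{Thm3}. For $\beta<\beta_c^\mathsf{m}$, that theorem asserts that $\delta=\tilde f^\mathsf{m}(\beta)$ is the unique positive root of
\[
\delta-h_\beta(\delta)=\log\Gamma^\mathsf{m}(\beta),
\]
so the task reduces to expanding both sides as $\beta\uparrow\beta_c^\mathsf{m}$ and inverting the resulting relation.

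The right-hand side is straightforward: since $\Gamma^\mathsf{u}(\beta)=c_\beta e^{-\beta}$ and $\Gamma^\mathsf{nu}(\beta)=\tfrac{2}{3}c_\beta e^{-\beta}$ are smooth in $\beta$, and since $\Gamma^\mathsf{m}(\beta_c^\mathsf{m})=1$, a direct computation starting from \eqref{lawP} yields
\[
\frac{d}{d\beta}\log\Gamma^\mathsf{m}(\beta)\Big|_{\beta=\beta_c^\mathsf{m}}=-1-\frac{e^{-\beta_c^\mathsf{m}/2}}{1-e^{-\beta_c^\mathsf{m}}}=-c_\mathsf{m},
\]
hence $\log\Gamma^\mathsf{m}(\beta_c^\mathsf{m}-\epsilon)=c_\mathsf{m}\,\epsilon+O(\epsilon^{2})$.

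The main step, and the main obstacle, is to establish that as $\delta\to 0^+$
\[
h_\beta(\delta)=-d_\beta\,\delta^{2/3}\bigl(1+o(1)\bigr),\qquad d_\beta:=2^{-1/3}|a_1'|\,\sigma_\beta^{2/3},
\]
uniformly for $\beta$ in a left-neighbourhood of $\beta_c^\mathsf{m}$. Two ingredients enter. First, for a standard Brownian motion $B$ the exact scaling $\alpha\int_0^T|B(t)|\,dt\stackrel{d}{=}\int_0^{\alpha^{2/3}T}|\hat B(s)|\,ds$ combined with Groeneboom's identification of the Laplace transform of the Brownian occupation area in terms of the Airy function yields
\[
\lim_{T\to\infty}\tfrac{1}{T}\log\mathbf{E}\bigl[e^{-\delta\sigma_\beta\int_0^T|B(t)|dt}\bigr]=-d_\beta\,\delta^{2/3},
\]
which is formula \eqref{c1} in disguise. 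Second, one transfers this to the random walk $V$ under $\mathbf{P}_\beta$ by Donsker's invariance principle: the rescaled area $A_N(V)/(\sigma_\beta N^{3/2})$ converges in law to $\int_0^1|B(t)|dt$, and upgrading this weak convergence to a convergence of exponential moments at the critical length scale $N\sim\delta^{-2/3}$ is the delicate point. This exchange of limits can be secured by a quantitative Gaussian coupling (KMT or Skorokhod embedding) together with the exponential moment control on $V$ that is immediate from the explicit law \eqref{lawP}, ensuring that the corrections to the Brownian Laplace transform are of order $1+o(1)$ uniformly in $\beta$ near $\beta_c^\mathsf{m}$.

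Once both asymptotics are in hand, the defining equation at $\beta=\beta_c^\mathsf{m}-\epsilon$ reads
\[
\delta+d_{\beta_c^\mathsf{m}-\epsilon}\,\delta^{2/3}\bigl(1+o(1)\bigr)=c_\mathsf{m}\,\epsilon\bigl(1+O(\epsilon)\bigr).
\]
Since $\beta\mapsto d_\beta$ is continuous through $\sigma_\beta$, and since $\delta=o(\delta^{2/3})$ as $\delta\to 0^+$, the linear term on the left is negligible at leading order and a one-line inversion produces $\delta=(c_\mathsf{m}/d_\mathsf{m})^{3/2}\epsilon^{3/2}(1+o(1))$, which, recalling $\delta=\tilde f^\mathsf{m}(\beta_c^\mathsf{m}-\epsilon)$, is exactly the claimed asymptotic and identifies the order of the transition as $3/2$.
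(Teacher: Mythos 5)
Your outline matches the paper's proof: reduce via the free-energy equation of Theorem~\ref{Thm3}, Taylor-expand $\log\Gamma^\mathsf{m}$ at $\beta_c^\mathsf{m}$ to get $c_\mathsf{m}\,\epsilon(1+o(1))$, and establish a uniform-in-$\beta$ asymptotic $h_\beta(\delta)\sim -d_\beta\,\delta^{2/3}$ by a strong-approximation coupling (the paper uses Shao's Sakhanenko-type bound, where you mention KMT) combined with the Kac/Takacs identification of the Brownian-area Laplace transform, then invert. The one step your sketch compresses is the block decomposition of $\{1,\dots,N\}$ into windows of length $T\delta^{-2/3}$ plus the monotonicity Lemma~\ref{ineEarea} (and its lower-bound counterpart with the $\eta\sqrt\Delta$ confinement), which is what lets the $N\to\infty$ limit be reduced to a single-window estimate before the coupling is applied; this is what you describe as ``upgrading weak convergence at the critical length scale,'' and it is precisely the Markovian sub/super-multiplicativity argument of Section~\ref{sec:asymhbeta}.
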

\begin{remark}
The Laplace transform $\mathbf{E}(e^{-s\int_0^1 |B_s| ds})$ for $s>0$ was first computed analytically by Kac in \cite{K46} and studied by Takacs \cite{TA93} (see for instance the survey by Janson \cite{J07}).
\end{remark}

\begin{remark}
The critical exponent $3/2$ is given by the leading term of the Taylor
expansion of $\hbeta$ at $0^+$, i.e., $\hbeta(\gamma)\sim -c\,
\gamma^{2/3}$ (with $c>0$). The method of proof we used consists in
cutting the trajectories into blocs of size $\gamma^{-2/3}$. This very
method was used in \cite{HDHK03}, in dimension $d=1$, to prove that 
discrete Domb-Joyce type models converge towards  continuous Edwards
type models in the weak coupling limit.
\end{remark}
\begin{remark}The asymptotic $\hbeta(\gamma)\sim -c\,
\gamma^{2/3}$ is closely related to the
investigation of the so called pre-wetting phenomenon (see \cite{HV04},
where the scaling exponent is obtained from a renormalization procedure
similar to ours). The pre-wetting phenomenon  is observed when a thermodynamically stable 
gas is in contact with a substrate (hard-wall) that has a strong preference for the liquid 
phase. In such a situation, a thin layer of liquid may appear that separates the substrate from the gas. When the temperature $T$ gets closer to the liquid/gas boiling temperature $T_b$, the layer of liquid becomes thicker.
The liquid-gas interface can therefore be modeled by a random walk trajectory constrained to remain positive and whose area is penalized via a Gibbs factor $\delta A_N(V)$ where $\delta$ vanishes as $T\to T_b$. Close to criticality ($\delta=0$), the correlation length of the system varies as 
$\delta^{-2/3}$ which explains the $2/3$ exponent of $\hbeta$ at $0^+$. 

\end{remark}

The determination   of the precise  asymptotics of the free energy close
to $\beta_c$
brings the IPDSAW into a thin class of statistical mechanical  models for which 
the behavior of the free energy close to criticality is well understood. This is the case, for instance
for the pinning/wetting model (see \cite[Chapter 2]{GIA11}). Perturbing such models by adding a
weak random component to their interactions is physically relevant (see
\cite{DHV92})
and gives rise to complex mathematical issues (see \cite{AS06}).
For the model of a polymer pinned by a linear interface,
the issue of the disorder relevance on the phase transition   was 
controversial until it was settled recently (see  \cite{DGLT09} or
\cite[Chapters 4 and 5]{GIA11}, for a survey).
For the IPDSAW, a natural way of introducing the disorder would be to
assign an energetic price $\beta+s\xi_{i,j}$ to the self-touching between
monomers $i$ and $j$. The mechanism governing the phase transition being
quite different from its counterpart in the pinning model, the
investigation of the disorder effect is relevant both mathematically and
physically.

\subsubsection{\bf Path properties inside the collapsed phase} 
The main result of this paper is concerned with the path behavior of the polymer inside its collapsed phase $(\beta>\beta_c)$. We divide each trajectory into a succession of beads. Each bead is made of vertical stretches of strictly positive length and arranged in such a way that two consecutive stretches have opposite directions (north and south) and are separated by one horizontal step (see Fig. \ref{fig:beads}).  A bead ends when the polymer gives the same direction to two consecutive vertical stretches or when a zero length 
stretch appears, which corresponds to two consecutive horizontal
steps. We will prove that the polymer folds itself up into a {\it
  unique macroscopic bead} and we will identify its horizontal
extension and its asymptotic deterministic shape.  To quantify these results we need the following notations.

\begin{figure}[ht]\center
	\includegraphics[width=.55\textwidth]{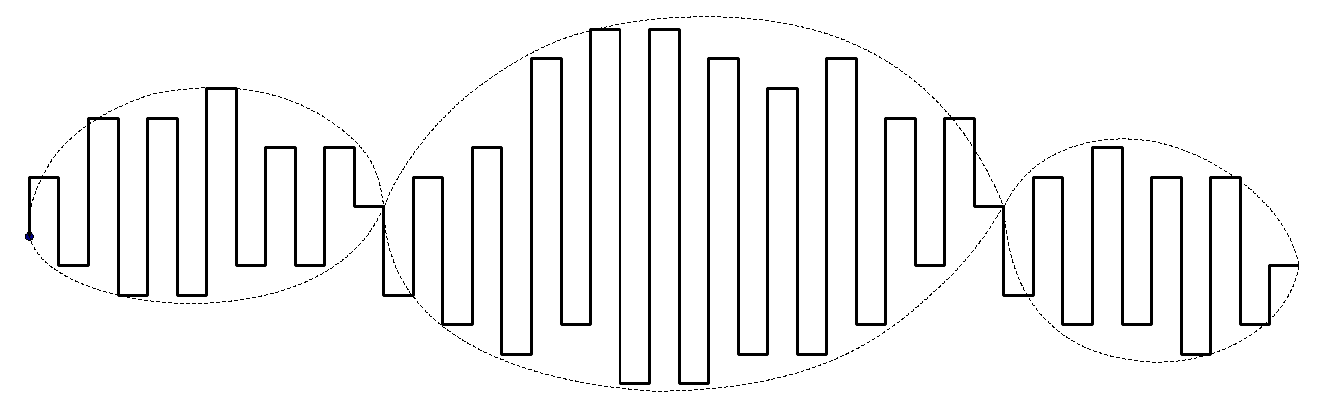}
	\caption{Example of a trajectory with $3$ beads.}
	\label{fig:beads}
\end{figure}

\subsubsection{Number of beads}
Let $l\in \Omega_L$  and denote by  $N_L(l)$ its horizontal extension, i.e., $N_L(l)$ is the integer $N$ such that $l\in \cL_{N,L}$.
Pick $l\in \cL_{N,L}$ and let $(u_j)_{j=1}^N$ be the sequence of
accumulated lengths of the polymer after each vertical stretch, adding
the lengths of the one step horizontal steps, that is   $u_j=|l_1|+\dots+ |l_j|+j$ for $j\in \{1,\dots,N\}$. For convenience only, set $l_{N+1}=0$. Set also $x_0=0$ and for $j\in \N$ such that $x_{j-1}<N$, set $x_j=\inf\{i\geq x_{j-1}+1\colon\, l_i\;\tilde{\wedge}\;l_{i+1}=0\}$ (see Fig. \ref{fig:transform}). Finally, let  $n_L(l)$ be the index of the last $x_j$ that is well defined, i.e., $x_{n_L(l)}=N$. Thus we can decompose any trajectory $l\in \Omega_L$ into a succession of $n_L(l)$ beads, each of them being associated with a subinterval of  $\{1,\dots,L\}$ written as 
\begin{equation}\label{ut}
I_j=\{u_{x_{j-1}}+1,\dots,u_{x_j}\}, \quad\text{for}\quad j\in \{1,\dots,n_L(l)\},
\end{equation}
and therefore, we can partition $\{1,\dots,L\}$ into $\cup_{j=1}^{n_L(l)} I_j$.
At this stage, we can define the largest bead of a trajectory $l\in \Omega_L$ as $I_{j_{\text{max}}}$ with
\begin{equation}\label{ut2}
j_{\text{max}}=\argmax\big\{|I_j|, j\in \{1,\dots,n_L(l)\}\big\}.
\end{equation}
With Theorem \ref{Thm4} below, we claim that, in the collapsed phase, there is only one macroscopic bead. 
\begin{atheo}[One bead Theorem] \label{Thm4}
For  $\beta>\beta_c$, there exists a $c>0$ such that 
\begin{equation}
\lim_{L\to \infty} P_{L,\beta}\big(|I_{j_{\text{max}}}|\geq L-c\, (\log L)^4\big)=1.
\end{equation}
\end{atheo}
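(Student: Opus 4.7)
The plan is to combine a bead-decomposition factorization of the partition function with sharp single-bead asymptotics, and to conclude by a union bound on secondary beads.

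First, introduce $Z^{\text{b},\sm}_{m,\beta}$, the contribution to the partition function of trajectories in $\Omega_m$ that form a single bead. Since two consecutive beads are separated by a purely local event (two same-sign stretches or two consecutive horizontal steps), the bead decomposition yields, up to harmless local boundary factors,
\begin{equation}\label{eq:plan_bead_decomp}
Z^\sm_{L,\beta}\ =\ \sum_{k\geq 1}\ \sum_{\substack{m_1,\ldots,m_k\geq 1\\ m_1+\cdots+m_k=L}}\ \prod_{i=1}^{k} Z^{\text{b},\sm}_{m_i,\beta}.
\end{equation}

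Second, establish a sharp asymptotic for $Z^{\text{b},\sm}_{m,\beta}$ in the collapsed phase. For $\beta>\beta_c^\sm$, Theorem~\ref{Thm3} yields $\Gamma^\sm(\beta)<1$, so horizontal steps are strictly penalized. A single bead with $N$ stretches and $r_i=|l_i|>0$ of alternating signs has Hamiltonian $\beta\sum_{i=1}^{N-1} r_i\wedge r_{i+1} = \beta(m-N)-\tfrac{\beta}{2}(r_1+r_N)-\tfrac{\beta}{2}\sum_{i=1}^{N-1}|r_{i+1}-r_i|$. This expresses $Z^{\text{b},\sm}_{m,\beta}$ as a sum over the auxiliary random walk $V$ of law $\mathbf{P}_\beta$ restricted to staying positive, weighted by the horizontal factor $\Gamma^\sm(\beta)^N$. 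Optimizing on $N$ and using positivity-conditioned local-limit estimates for $V$ (in the spirit of those underlying $h_\beta$ in Section~\ref{sec:chbeta}) produces an asymptotic of the form
\begin{equation}\label{eq:plan_bead_asymp}
Z^{\text{b},\sm}_{m,\beta}\ \asymp\ e^{m\phi^\sm_\beta}\, e^{-\sigma(\beta)\sqrt{m}}\, m^{-\rho},
\end{equation}
for some $\sigma(\beta)>0$ and $\rho\geq 0$, with matching upper and lower bounds. The surface factor $e^{-\sigma(\beta)\sqrt m}$ encodes the cost of the $\Theta(\sqrt m)$ horizontal steps required by any single bead of total length $m$.

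Third, set $s=L-|I_{j_{\text{max}}}|$ and combine \eqref{eq:plan_bead_decomp} with \eqref{eq:plan_bead_asymp}. Any trajectory with $|I_{j_{\text{max}}}|\le L-s$ contains secondary beads of total length $s'\ge s$; enumerating over the $O(L)$ possible positions and sizes of the largest secondary bead, and subsuming configurations with several secondary beads into analogous but smaller terms, yields
\begin{equation}
P^\sm_{L,\beta}\bigl(|I_{j_{\text{max}}}|\le L-s\bigr)\ \lesssim\ L\sum_{s'\ge s} e^{-\sigma(\beta)\sqrt{s'}}\, (s')^{-\rho}\ +\ (\text{several-secondary-bead terms, similar}).
\end{equation}
Choosing $s=c(\log L)^4$ with $c$ large enough (depending on $\sigma(\beta)$ and $\rho$) makes this tend to $0$: the margin of $(\log L)^4$ over the naive $(\log L)^2$ comfortably absorbs the polynomial prefactor $(s')^{-\rho}$ and the combinatorics of several secondary beads.

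The main obstacle is the sharp asymptotic \eqref{eq:plan_bead_asymp}. The lower bound follows from a near-optimal (Wulff-like) single-bead profile and a local limit computation for $V$ around its optimal stretch distribution. The upper bound requires a concentration argument forcing any single-bead trajectory to lie close to this optimum, typically via a tilted-walk large-deviation estimate restricted to positive $V$-trajectories; the surface tension $\sigma(\beta)$ identified here is the same object that governs the Wulff-shape statement of Theorem~E, and proving the two theorems in tandem is natural.
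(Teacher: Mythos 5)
Your plan is in the right spirit -- a bead decomposition, sharp one-bead asymptotics, then a union bound -- and your Step~2 matches the paper's Proposition~\ref{prop1}, whose proof is indeed where most of the technical work lives. But there is a genuine gap at the union-bound stage: the case of \emph{many small secondary beads} is not covered by ``analogous but smaller terms.''

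Concretely, the union bound you write enumerates the largest secondary bead and sums $e^{-\sigma(\beta)\sqrt{s'}}$ over its size $s'\ge s$. This handles the event ``there are at least two beads of size $\ge s(\log L)^2$'' (the paper does the same thing, via Lemma~\ref{lem:decoZ} together with the convexity inequality $\sqrt{L_1}+\sqrt{L_2}-\sqrt{L_1+L_2}\ge\tfrac12\sqrt{\min\{L_1,L_2\}}$, which is what makes splitting a macroscopic bead into two cost $e^{-c\sqrt{\min}}$). What it does not handle is a configuration whose complement of the main bead consists of, say, $\Theta((\log L)^4)$ beads of length $O(1)$ forming a long prefix. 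There is no single ``largest secondary bead'' to penalize with $e^{-\sigma\sqrt{s'}}$, and summing the one-bead costs $\sum_j e^{-\sigma\sqrt{m_j}}$ over the pieces $m_j$ is not obviously $\lesssim e^{-\sigma\sqrt{\sum m_j}}$ in the direction you need. The paper closes this hole with a separate ingredient that your plan omits: Lemma~\ref{lem2}, an exponential bound $P^\sm_{L,\beta}(N_L(l)\ge a_1\sqrt L)\le a_2 e^{-aL}$ on the horizontal extension. Since each bead contributes at least one horizontal step, a prefix of length $t\ge v(\log L)^4$ built from beads $\le s(\log L)^2$ forces $N_t\ge t/(s(\log L)^2)\ge a_1\sqrt t$, an exponentially rare event. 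Without this (or an equivalent direct estimate on the partition function restricted to ``all beads small''), Step~3 of your plan does not conclude.

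Two smaller points. First, the exact factorization \eqref{eq:plan_bead_decomp} over beads fails because the sign of the first stretch of a bead is constrained by the preceding bead unless that bead has length one; the paper's Lemma~\ref{lem:decoZ} gives the correct statement, namely a factorization up to a factor in $[\tfrac12,1]$ per cut, which is what you mean by ``harmless local boundary factors'' -- but it does need to be proved, not assumed. Second, the paper's Proposition~\ref{prop1} does \emph{not} establish matching polynomial prefactors: the lower bound has exponent $\kappa>1/2$ and the upper bound has exponent $1/2$, because the positivity-conditioned local limit estimate (Proposition~\ref{prop:comppos}) is only one-sided. This mismatch is harmless for Theorem~\ref{Thm4} provided the surface term $e^{-c\sqrt{\min}}$ is made to beat a polynomial margin $L^{\kappa+4}$ by taking $s=c'(\log L)^2$ large, but your assertion that both bounds share a single exponent $\rho$ overstates what is available.
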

\begin{remark}
Dividing trajectories into beads does not give rise to an underlying renewal process as for instance,
for the homogeneous pinning model when the trajectory is divided into excursions away from the origin (see for instance \cite[Chapter 2]{cf:Gia}).  The fact that, after a bead of length $1$ the first stretch of the following bead can be either positive or negative whereas its orientation is constrained when the former bead is strictly larger than $1$ creates a dependency between consecutive beads that prevents us from rewriting the partition function with the help of an associated renewal process. 
However, if we omit the dependency between consecutive beads then, thanks
to Proposition \ref{prop1}, the "bead process" 
$(u_{x_j})_{j=0}^{n_L(l)}$ under $P_{L,\beta}$ can be related to a sub-exponential defective renewal process 
$\tau=(\tau_i)_{i\geq 0}$ conditioned on  $L\in \tau$. This latter
process is characterized by an inter-arrival law $K:\overline{\N}\to [0,1]$ that
satisfies $K(\infty)>0$ and $K(n)= k(n) e^{-c\sqrt{n}}$ with $k:\N\to
\N$ a slowly varying function. Once conditioned by $\{L\in \tau\}$, it can be proven  (see \cite[Appendix A.5]{cf:Gia} for the heavy tailed case or more recently \cite{NT14} where the sub-exponential case is explicitly treated) that 
the number of renewals is $O(1)$  and that again there is only one
macroscopic renewal (see e.g. \cite{Asmussen03} for a general background
on renewal theory). 

\end{remark}

\subsubsection{Shape Theorem}
First, recall the one-to-one correspondence between $\Omega_L$ and $\cW_L$ described in Section \ref{s11} and denote by $w_l$  
the path in $\cW_L$ associated with a given family of vertical stretches $l\in   \Omega_L$.  Then,  identify each  $l\in \Omega_L$ with a connected compact subset of $\R^2$ denoted by $S_L(l)$ that extends the sites of $\Z^2$
occupied by $w_l$ to squares of length $1$, i.e.,  
\be{defS}
S_L(l)=\Big\{ \cup_{i=0}^{L} w_l(i)+[-\tfrac12,\tfrac12]^2\Big\}, \quad l\in \Omega_L.
\ee 
With Theorem \ref{Theo-shape} below we prove that, once rescaled horizontally and vertically by $\sqrt{L}$ the subset 
$S_L(l)$ converges in probability and for the Hausdorff distance towards $\cS_\beta$ a deterministic subset
of $\R^2$.  Before defining $\cS_\beta$ we need to settle some notation. 

First, we denote by $\Ll(h),h\in (-\tfrac{\beta}{2},\tfrac{\beta}{2})$ the logarithmic moment generating function of the random variable $U_1$, i.e, 
\begin{equation}\label{defL}
\mathfrak{L}(h):=\log\mathbf{E}_\beta[e^{h U_1}],
\end{equation}
and  we introduce   $\Llam$ 
\begin{equation}\label{LLam}
{\textstyle \Llam({\bf h}):=\int_0^1 \Ll(x  h_0+ h_1)dx,}
\end{equation}
which is defined on
\begin{equation}\label{eq:Llambda}
{\textstyle\mathcal{D}:=\Bigl\{{\bf h}=( h_0, h_1)\in\mathbb{R}^2\colon h_1\in\bigl(-\tfrac{\beta}{2},\tfrac{\beta}{2}\bigr),\  h_0+ h_1\in\bigl(-\tfrac{\beta}{2},\tfrac{\beta}{2}\bigr)\Bigr\}.}
\end{equation}
Then, we let
 $\tilde{{\bf h}}(q,0):=(\tilde{h}_0(q,0),\tilde{h}_1(q,0))$ be the unique solution of the equation
\begin{equation}\label{eq:deftil}
\nabla \Llam({\bf h})=(q,0)\,.
\end{equation}

Since for $\beta> \beta_c$ the function
\begin{equation}\label{defg}
\tilde{G}(a):=a\log\Gamma(\beta)-\tfrac{1}{a}\,\tilde h_0\bigl(\tfrac{1}{a^2},0\bigr)+a\Llam\bigl(\tilde {\bf h}\bigl(\tfrac{1}{a^2},0\bigr)\bigr),
\end{equation}
defined on $(0,\infty)$ is $C^\infty$, strictly concave and negative
(see Section \ref{pp5}),
we let $a_\beta >0$ be its unique maximizer.

We let  $\gamma^*_{\beta}$ be the Wulff shape minimizing the rate function of  Mogulskii large deviation principle  (see \cite[Theorem 5.1.2]{DZ} ) applied to the random walk of law $\bP_\beta$, on the set containing the cadlag functions $\gamma:[0,1]\to \R$ satisfying $\gamma(1)=0$ and 
$\int_0^1 \gamma(t) dt=1/a_\beta^2$ and endowed with the supremum norm 
$|| \cdot ||_\infty$.  The following explicit formula holds (see
Section \ref{pr:Thm7}):
\be{defgamma}
\gamma^*_{\beta}(s)=\int_0^s \Ll'\big[ (\tfrac12-x)\,  \tilde h_{0}\big(\tfrac{1}{a_\beta^2},0\big)\big] dx, \quad s\in [0,1]\,.
\ee
Eventually we define the limiting shape
\be{defSS}
\cS_\beta=\big\{(x,y)\in \R^2\colon\, x\in[0,a_\beta], y\in \big[-\tfrac{1}{2}\, a_\beta \, \gamma^*_\beta(x/a_\beta) ,\tfrac{1}{2}\, a_\beta\,  \gamma^*_\beta(x/a_\beta)\big]\big\}
\ee
and we denote by $d_H$ the Hausdorff distance between subsets of $\R^2$.

\begin{atheo}[Shape Theorem]\label{Theo-shape}
For $\beta>\beta_c$, we have convergence in $P_{L,\beta}$ probability for the
Hausdorff distance towards a deterministic shape 
\begin{equation}
\lim_{L\to \infty}
P_{L,\beta}\Big(d_H\Big(\tfrac{S_L(l)}{\sqrt{L}},\cS_\beta\Big)>\epsilon\Big)=0
\quad(\forall \epsilon >0).
\end{equation} 
\end{atheo}

This shape Theorem is equivalent to the combination of Theorem \ref{Prop5} and Theorem \ref{Convenv}
below. We display in Appendix \ref{equiv:thDEF} a proof of this equivalence.

\begin{atheo}[Horizontal extension]\label{Prop5}
For $\beta>\beta_c$, for all $\epsilon>0$  
\begin{equation}
\lim_{L\to \infty} P_{L,\beta}\biggl(\Bigl|\frac{N_L(l)}{\sqrt{L}}-a_\beta\Bigr|>\epsilon\biggr)=0.
\end{equation} 
\end{atheo}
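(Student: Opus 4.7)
The plan is to reduce to a single macroscopic bead via Theorem \ref{Thm4}, rewrite the single-bead partition function in terms of the auxiliary random walk $V$, and identify the typical horizontal extension by a saddle-point argument over $N$.

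By Theorem \ref{Thm4}, with $P^\sm_{L,\beta}$-probability tending to $1$, the trajectory consists of one bead of mass at least $L - c(\log L)^4$ together with residual beads of total length $O((\log L)^4)$. The residual beads contribute $o(\sqrt L)$ horizontal steps, so it suffices to analyse the horizontal extension of the macroscopic bead. Within that bead the vertical stretches alternate in sign, so writing $l_i = (-1)^{i-1} y_i$ with $y_i \geq 1$ and $\sum_{i=1}^N y_i = L - N$, the identity $a \wedge b = \tfrac{1}{2}(a + b - |a-b|)$ rewrites the Hamiltonian as
\begin{equation*}
H = \beta(L-N) - \tfrac{\beta}{2}(y_1 + y_N) - \tfrac{\beta}{2}\sum_{i=1}^{N-1}|y_{i+1}-y_i|.
\end{equation*}
Recognising $e^{-\beta|k|/2}/c_\beta = \mathbf{P}_\beta(v_1 = k)$ from \eqref{lawP} and collecting the entropic prefactor $\mathbf{P}^\sm(l)$ together with $e^{\phi^\sm_\beta L}$ yields
\begin{equation*}
e^{-\phi^\sm_\beta L}\, Z^{\mathrm{bead}}_{N,L,\beta} = 2\, c_\beta^{-1}\, \Gamma^\sm(\beta)^N\, \Psi_{N,L}(\beta),
\end{equation*}
where
\begin{equation*}
\Psi_{N,L}(\beta) := \sum_{y_1 \geq 1} e^{-\beta y_1/2}\, \mathbf{E}_\beta^{y_1}\!\bigl[e^{-\beta y_N/2}\, \mathbf{1}_{\{y_i \geq 1\ \forall i\}}\, \mathbf{1}_{\{\sum_i y_i = L - N\}}\bigr],
\end{equation*}
and $(y_i)$ is the random walk with increments of law $\mathbf{P}_\beta$ started at $y_1$.

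Set $N = a\sqrt L$. The typical stretch length is $(L-N)/N \sim \sqrt L/a$, whereas the walk $V$ fluctuates on scale $\sqrt N = L^{1/4}$, which is much smaller. Three consequences follow: the positivity constraint $y_i \geq 1$ is non-binding with error exponentially small in $\sqrt L$; the area constraint $\sum_i y_i = L-N$ is a local-CLT condition pinning $y_1$ to an $O(L^{1/4})$-window around $(L-N)/N$; and the boundary weight $e^{-\beta(y_1+y_N)/2}$ contributes $\exp(-\beta\sqrt L/a\,(1+o(1)))$. A Laplace / local-CLT computation combining these ingredients gives
\begin{equation*}
\log\!\bigl(e^{-\phi^\sm_\beta L}\, Z^{\mathrm{bead}}_{N,L,\beta}\bigr) = \sqrt L\, G_\sm(a) + o(\sqrt L)
\end{equation*}
for an explicit strictly concave function $G_\sm : (0,\infty) \to (-\infty,0)$ combining the entropic term $a\,\log \Gamma^\sm(\beta)$, the boundary exponent $-\beta/a$, and a quadratic Gaussian-tilt contribution from the competition between $e^{-\beta y_1/2}$ and the area-constraint Gaussian (which involves $\sigma_\beta^2$). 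Strict concavity of $G_\sm$ yields a unique maximiser $a_\sm(\beta)>0$, and the convergence of $N_L/\sqrt L$ to $a_\sm(\beta)$ in $P^\sm_{L,\beta}$-probability then follows by comparing, for any $\epsilon > 0$, the partition function restricted to $\{|N/\sqrt L - a_\sm(\beta)| > \epsilon\}$ with the full partition function; the restricted quantity is smaller by a factor exponentially small in $\sqrt L$.

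The main obstacle is controlling $\log \Psi_{N,L}(\beta)$ to precision $o(\sqrt L)$ \emph{uniformly} in $N$ over an $O(\sqrt L)$-window around $a_\sm(\beta)\sqrt L$. This demands a uniform local CLT for the area functional $\sum_i V_i$ of the geometric walk, uniform control of the positivity constraint (which contributes only polynomial corrections once $y_1$ exceeds the fluctuation scale $\sqrt N$), and a careful handling of the Gaussian tilt induced by the boundary weight $e^{-\beta(y_1+y_N)/2}$, which competes with the area-constraint Gaussian at the same scale. Once these uniform estimates are in hand, the saddle-point and concentration steps are routine.
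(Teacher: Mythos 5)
Your overall plan matches the paper's: reduce to a single macroscopic bead via Theorem~\ref{Thm4}, express the one-bead partition function through the auxiliary walk $V$ and the positivity/area constraints (which is exactly formula~\eqref{eq:partfunco}), identify a rate function $G_\sm(a)$ for $N = a\sqrt L$, and conclude by a saddle-point/concentration argument. So far so good. But your analysis of $\Psi_{N,L}$ (i.e.\ of $\mathbf P_\beta(\cV^+_{N+1,L-N})$) rests on a wrong picture of the dominant strategy, and this produces an incorrect rate function.

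You assume that the optimal way to achieve area $L-N$ is to ``pin $y_1$ to an $O(L^{1/4})$-window around $(L-N)/N$'' and then let the walk fluctuate freely on scale $\sqrt N$ — a plateau at height $\sqrt L/a$, paid for by the boundary weight $e^{-\beta(y_1+y_N)/2}\sim e^{-\beta\sqrt L/a}$. This plateau strategy is strictly suboptimal. Because $L''>0$, the Legendre transform $L^*$ is strictly convex (even though asymptotically linear for the geometric increments), and the minimizer of $\int_0^1 L^*(\gamma') \,dt$ subject to $\gamma(0)=\gamma(1)=0$ and $\int_0^1\gamma=1/a^2$ is the smooth Wulff arc $\gamma^*_{\beta,\mm}$ of~\eqref{defgamma}, not a plateau with jumps. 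In particular $J(\gamma^*_{\beta,\mm})=\tilde h_0(1/a^2,0)/a^2 - L_\Lambda(\tilde H(1/a^2,0))$, which is strictly less than the plateau cost $\beta/a^2$; the difference is the (divergent as $a\to 0$, but finite here) term $L_\Lambda(\tilde H)$ that you omit. Equivalently, under the correct exponential tilting $H^q_N$ of Section~\ref{sec:lcltp}, the conditioned walk starts at height $O(1)$ and rises to $\sim\sqrt L/a$ only in the interior, so the boundary weight contributes $O(1)$, not $e^{-\beta\sqrt L/a}$. Your $G_\sm(a)$ — entropy plus boundary exponent $-\beta/a$ plus a Gaussian-tilt correction — is therefore not the true rate $\tilde G_\sm(a)$ of~\eqref{defg}, and the maximizer you would extract is not the $a_\sm(\beta)$ that the polymer actually concentrates at. A related symptom: you claim the positivity constraint is non-binding because $y_1\gg\sqrt N$, which is only true for your (suboptimal) plateau strategy. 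Under the correct tilting $y_1$ is $O(1)$ and positivity is genuinely delicate; this is why the paper needs Proposition~\ref{prop:comppos}, which proves the positivity constraint costs only a polynomial factor and is a nontrivial estimate in its own right. To repair your argument you need the Cramér tilting plus the Dobrushin--Hryniv local CLT (Propositions~\ref{lem:bHtilde}, \ref{convunif}, \ref{lem:impor}) and Proposition~\ref{prop:comppos}, exactly as in the paper's Sections~\ref{pr:prop1} and~\ref{pp5}.
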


\begin{remark}
Determining the horizontal extension is challenging in the extended regime $(\beta<\beta_c)$ 
and in the critical regime ($\beta=\beta_c)$ as well. In the extended regime, we can already derive from the variational formula of the free energy in \cite[Theorem 1.2]{NGP13} that there exists $c_2>c_1>0$ so that $\lim_{L\to \infty} P_{L,\beta}( N_L(l)/L\in [c_1,c_2])=1$. The extension is therefore of order $L$ and we expect that a law of large numbers also holds so that 
$N_L(l)/L$ converges in $P_{L,\beta}$ probability towards some constant $e_\beta\in (0,1)$. 
The critical regime is more delicate. In view of the random walk representation and since $\Gamma(\beta_c)=1$,  the law of 
$N_L(l)$  when $l$ is sampled from $P_{L,\beta}$ is exactly that of  
the stopping time $\tau_L:=\inf\{n\geq 1\colon n+A_n(V)\geq L\}$ of a random walk $V$ of law $\bP_\beta$ 
conditioned on $\{V_{\tau_L}=0, A_{\tau_L}=L-\tau_L\}$.  We expect that a Donsker type invariance principle will hold there so that 
typically $A_{\tau_L}\sim \tau_L^{3/2}$ and thus we expect  $N_L(l)/L^{2/3}$ to be  tight under $P_{L,\beta}$.
\end{remark}

The next Theorem gives the scaling limit of the upper and lower envelopes of the path in the collapsed phase.
 Pick $l \in \cL_{N,L}$ and let $\cE_l^+=(\cE^+_{l,i})_{i=0}^{N+1}$  be  the path that links the 
top  of each stretch consecutively (see Figure \ref{fig:envel}), while  $\cE_l^-=(\cE^-_{l,i})_{i=0}^{N+1}$ is the counterpart of 
$\cE_l^+$ that links the 
bottom   of each stretch consecutively. Thus, $\cE^+_{l,0}=\cE^-_{l,0}=0$,
\begin{align}\label{trek}
\cE^+_{l,i}&=\max\{l_1+\dots+l_{i-1}, l_1+\dots+l_{i}\},\quad i\in \{1,\dots,N\},\\
\cE^-_{l,i}&=\min\{l_1+\dots+l_{i-1}, l_1+\dots+l_{i}\},\quad i\in \{1,\dots,N\},
\end{align}
and $\cE^+_{l,N+1}=\cE^-_{l,N+1}=l_1+\dots+l_{N}$.
Then, let $\tilde\cE_l^+$ and $\tilde\cE_l^-$ be  the time-space rescaled  cadlag  processes associated with 
$\cE_l^+$ and $\cE_l^-$ and defined as
\begin{equation}\label{defti}
\tilde\cE_l^+(t)= \frac{1}{N+1}\,  \cE^+_{l,{\lfloor t\,(N+1)\rfloor}}\quad \text{and}\quad \tilde\cE_l^-(t)= \frac{1}{N+1}\,  \cE^-_{l,{\lfloor t\,(N+1)\rfloor}},\quad t\in [0,1].
\end{equation}

\ 

\begin{figure}[ht]
	\includegraphics[width=.5\textwidth]{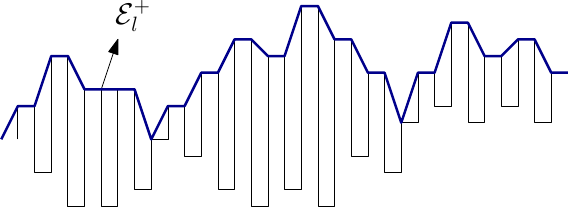}
	\caption{Example of the upper  envelope of a trajectory}
	\label{fig:envel}
\end{figure}

\begin{atheo}[Wulff shape]\label{Convenv} 
For  $\beta>\beta_c$ and  $\gep>0$,
\begin{align}\label{conven}
\nonumber \lim_{L\to \infty} P_{L,\beta}\Big(  \big\|\tilde\cE^+_{l}-\frac{\gamma^*_{\beta}}{2}\big\|_{\infty} >\gep \Big)&=0,\\
\lim_{L\to \infty} P_{L,\beta}\Big(  \big\|\tilde\cE^-_{l}+\frac{\gamma^*_{\beta}}{2}\big\|_{\infty} >\gep \Big)&=0.
\end{align}
\end{atheo}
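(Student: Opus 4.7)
The approach combines the two previous theorems --- which reduce the polymer geometry to a single macroscopic bead of horizontal extension $N^\ast\approx a_\mm(\beta)\sqrt L$ carrying all but $O((\log L)^4)$ of the vertical mass --- with a Mogulskii-type large deviation principle applied inside that bead. By Theorem~\ref{Thm4}, up to an event of vanishing probability the trajectory consists of the unique bead $I_{j_{\max}}$; by Theorem~\ref{Prop5} its horizontal length concentrates around $a_\mm(\beta)\sqrt L$. It therefore suffices to establish \eqref{conven} conditionally on typical values of these two observables.

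Inside a bead the stretches $l_n$ strictly alternate in sign, so up to a global sign one may write $l_n=(-1)^{n-1}X_n$ with $X_n\ge 1$ and with the convention $X_0=X_{N+1}=0$. The elementary identity $a\wedge b=\tfrac12(a+b-|a-b|)$ factorizes the Boltzmann weight as
\begin{equation*}
\exp\Bigl(\beta\sum_{n=1}^{N-1}l_n\;\tilde{\wedge}\;l_{n+1}\Bigr)=\exp\Bigl(\beta\sum_{n=1}^{N}X_n\Bigr)\prod_{n=0}^{N}e^{-\tfrac{\beta}{2}|X_{n+1}-X_n|}.
\end{equation*}
Up to a factor $c_\beta^{N+1}$, the second product is the density at $(X_1,\ldots,X_N)$ of the random walk $V$ of law $\bP_\beta$ with $V_0=V_{N+1}=0$, while the first factor is constant on $\{\sum_n X_n=L-N\}$. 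Consequently, conditionally on $N$ and on the total vertical mass, $(X_1,\ldots,X_N)$ has the law of a positive bridge of $V$ of length $N+1$ conditioned on an area of order $L\sim N^2/a_\mm(\beta)^2$.

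Mogulskii's theorem (\cite{DZ}, Theorem~5.1.2) applied to $\widehat V_N(t):=V_{\lfloor (N+1)t\rfloor}/(N+1)$ gives, at speed $N+1$, the rate function $\mathcal I(\gamma)=\int_0^1\Lambda^\ast(\dot\gamma(t))\,dt$, with $\Lambda^\ast$ the Legendre transform of $\Lambda(\lambda):=\log\bE_\beta(e^{\lambda v_1})$. Under the constraints $\gamma(0)=\gamma(1)=0$, $\gamma\ge 0$ and $\int_0^1\gamma\,dt=1/a_\mm(\beta)^2$, strict convexity of $\Lambda^\ast$ produces a unique minimizer, which by definition is $\gamma^\ast_{\beta,\mm}$; contraction then yields the uniform concentration of $\widehat V_N$ on $\gamma^\ast_{\beta,\mm}$ under the conditional law. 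Translating this onto the envelopes is elementary: pairing consecutive terms in the alternating partial sums $\sum_{i=1}^{k}(-1)^{i-1}X_i$ shows that whenever $X_{\lfloor (N+1)t\rfloor}/(N+1)\to\phi(t)$ uniformly for a smooth $\phi\geq 0$, both $\tilde\cE^+_l(t)\to\tfrac12\phi(t)$ and $\tilde\cE^-_l(t)\to-\tfrac12\phi(t)$ uniformly, which gives \eqref{conven}.

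The main obstacle is the conditioning step. Mogulskii's principle delivers exponential estimates only for the unconditioned walk, whereas one must condition simultaneously on positivity of the bridge and on an area of order $N^2\sim L$, two events each of probability exponentially small in $N\sim\sqrt L$. Upgrading the unconditional upper bound to the conditional law therefore requires a matching exponential lower bound for the joint conditioning event, i.e.\ a local central limit theorem around the Wulff profile $\gamma^\ast_{\beta,\mm}$. This is exactly the role of the refined partition function asymptotics constructed earlier in the paper around the tilted law $\bP_\beta$ with area penalty $h_\beta(\delta)$; once such an LCLT is in hand, the variational problem above closes the proof of Theorem~\ref{Convenv}.
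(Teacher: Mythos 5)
There is a genuine gap in the final translation step. You claim that whenever $X_{\lfloor (N+1)t\rfloor}/(N+1)\to\phi(t)$ uniformly for a smooth $\phi\geq 0$, the rescaled envelopes converge uniformly to $\pm\tfrac12\phi$. This implication is false. Write the middle process $M_{l,i}:=l_1+\dots+l_{i-1}+l_i/2=\sum_{j<i}(-1)^{j-1}X_j+(-1)^{i-1}X_i/2$, so that $\cE^{\pm}_{l,i}=M_{l,i}\pm X_i/2$. Uniform control on $X_j/(N+1)$ says nothing about the alternating sums in $M_{l,i}$: take for instance $X_j=N\phi(j/N)+(-1)^{j}\sqrt{N}$. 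Then $\sup_j|X_j/N-\phi(j/N)|=1/\sqrt N\to 0$, yet $\sum_{j=1}^{i}(-1)^{j-1}X_j$ contains a term $-i\sqrt N$ of order $N^{3/2}$, so $M_{l,i}/N$ diverges. Mogulskii delivers concentration of the profile at scale $N$, but the alternating sums that build $M_l$ respond to the $o(N)$ fluctuations around that profile, which the LDP does not see.

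The paper closes this gap by separating the envelope into two processes that require independent control. It writes $\cE^{\pm}_l=M_l\pm|V_l|/2$ and proves Lemma~\ref{ConvProf} ($\||\tilde V_l|-\gamma^*_{\beta,\mm}\|_\infty\to 0$ in probability, via Mogulskii --- essentially your main step) together with Lemma~\ref{tret} ($\|\tilde M_l\|_\infty\to 0$ in probability), and the latter is a genuinely separate argument. It first uses the one-bead theorem, through Lemma~\ref{refine31}, to show that the algebraic area $|\sum_i V_{l,i}|$ of the associated walk is within $O((\log L)^4)$ of its geometric area; it then exploits the martingale structure of the centered middle process $\widehat M_{N,i}$ under the tilted law $\mathbf{P}_{N,H_N^q}$ and applies Kolmogorov's maximal inequality and explicit moment-generating-function bounds (see \eqref{comput}--\eqref{computbis}) to obtain an exponential estimate $P_{N,H_N^q}(\max_{i\le N}|M_{N,i}|\geq\gep N)\leq e^{-\theta N}$. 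None of this is recoverable from ``pairing consecutive terms'': that heuristic is valid for a deterministic smooth profile, but it cannot survive the random sub-leading fluctuations of the bridge, and it is precisely those fluctuations that must be shown to wash out. Your reduction to one bead, the bridge/area representation, and the Mogulskii step are all consistent with the paper; it is the last translation that needs Lemma~\ref{tret}.
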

\bigskip

Note that  $\tilde\cE^+_{l}-\tilde\cE^-_{l}$ (respectively, $(\tilde\cE^+_{l}+\tilde\cE^-_{l})/2$) is the rescaled version of the process that associates with each index $i\in \{1,\dots, N_L(l)\}$ the length $|l_i|$ of the $i$-th stretch (respectively, the height of the middle of the $i$-th stretch $l_1+\dots+l_{i-1}+\frac{l_i}{2}$).   In view of Theorem \ref{Convenv}, the Wulff shape  $\gamma^*_{\beta}$ happens to be the limit, as $L\to \infty$, of $\tilde\cE^+_{l}-\tilde\cE^-_{l}$.
Such Wulff shape  was identified, for instance in \cite{DH96}, as the limit of a random walk trajectory conditioned by fixing a large algebraic area between the path and the $x$-axis.  However, the latter convergence is not sufficient to prove \eqref{conven}. We must indeed show that 
$(\tilde\cE^+_{l}+\tilde\cE^-_{l})/2$ converges to $0$ in probability.
\begin{remark}
The Wulff shape construction, initially displayed in \cite{W1901} appears in many models of statistical mechanics to describe the limiting shape of properly rescaled interfaces separating pure phases. 
Their construction is achieved  by  minimizing  the integral of the surface tension 
along the continuous contours that satisfy some particular geometric constraint.
A famous example arises from 2D Ising model in the phase transition regime. When considering a large square box of size $N$ with $-$ boundary condition and $T<T_c$, and by conditioning the total magnetization to 
be shifted  from its mean ($-m^* N^2$) by a factor $a_N\gg N^{4/3}$, it was proven in \cite{DKS92} at low temperature and then in \cite{I94}, \cite{I95} and \cite{IS98} up to $T_c$  that  this magnetization shift is due to a unique $+$ island whose  boundary, once rescaled by $1/\sqrt{a_N}$, converges towards a Wulff shape.   
\end{remark}


\subsection{Relationship to earlier work}
The IPDSAW and its continuous versions have attracted a lot of attention from \emph{physicists} until very recently (see for instance \cite{BDL09} or \cite{HST13}). 
The main method that has been employed  to investigate the IPDSAW involves combinatorial techniques (see \cite{BGW92}, \cite{OPB93} or more recently  \cite{OP07}). 
To be more specific,  this method consists in providing an analytic expression of the generating function $G(z)=\sum_{L=1}^\infty Z_{L,\beta} z^L$ whose radius of convergence $R$ satisfies $f=-\log R$.  For a detailed version of the computations, we refer to \cite[p.~371--375]{CHP12}.

The computation of the generating function $G$ allows us to determine the exact value of $\beta_c$
and to predict the behavior of the free energy close to criticality. However,  the analytic expression of $G$ is very complicated and only gives an indirect access to the free energy. Furthermore, this combinatorial method does not allow to study an observable 
which does not grow like $L$, for instance, inside the collapsed phase, the horizontal extension is of order $\sqrt{L}$ and this can not be proven by such method.

\bigskip

\emph{A new approach} has been developed in \cite{NGP13} to work with the partition function directly. 
With the help of an algebraic manipulation of the Hamiltonian, that
will be described in Section \ref{sec:rep}, it is indeed possible to rewrite the partition function in \eqref{pff} under the form 
\begin{equation}\label{eq:partfunc}
Z_{L,\beta}=c_\beta\,e^{\beta L}\sum_{N=1}^{L}\left(\Gamma(\beta)\right)^N\mathbf{P}_{\beta}(\mathcal{V}_{N+1,L-N}),
\end{equation}
where we recall \eqref{lawP} and \eqref{sqq} and where $\mathcal{V}_{n,k}$ is the set of those $n$-step trajectories of the random walk $V$ whose geometric area $A_n=\textstyle\sum_{i=1}^n|V_i|$ equals $k$, i.e.,
\begin{equation}\label{defcV}
\mathcal{V}_{n,k}:=\big\{(V_i)_{i=0}^n:\, A_n=k,\,V_{n}=0\big\}.
\end{equation}
Thus, the excess free energy $\tilde{f}(\beta)$ is the exponential growth rate of the summation in  \eqref{eq:partfunc}.   
In this new expression of the partition function, the term indexed by $N\in \{1,\dots,L\}$ in the summation corresponds to the contribution to the partition function of those trajectories  $l\in \cL_{N,L}$ (making $N$ horizontal steps).   
%

This new approach was used in \cite[Theorem 1.2]{NGP13}, to derive a  variational expression of the excess free energy, which allowed us to prove that the collapsed transition is second order with critical exponent $3/2$.  
\begin{theorem}[\cite{NGP13}, Theorem 1.4]\label{Thm3pt}
The phase transition is of order $3/2$. That is, there exist two constants $c_1,c_2>0$ such that for $\epsilon$ small enough
\begin{equation}
c_1\,\epsilon^{3/2}\leq\tilde{f}(\beta_c-\epsilon)\leq c_2\,\epsilon^{3/2}.
\end{equation}
\end{theorem}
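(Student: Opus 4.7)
\emph{Plan.} The strategy is to combine Theorem~\ref{Thm3} with a sharp two-sided asymptotic for $h_\beta(\delta)$ as $\delta\downarrow 0$ and then invert the resulting implicit equation. For $\beta<\beta_c^\mathsf{m}$, Theorem~\ref{Thm3} says that $\delta:=\tilde{f}^\mathsf{m}(\beta)>0$ is the unique positive solution of
\[
\delta-h_\beta(\delta) \;=\; \log\Gamma^\mathsf{m}(\beta).
\]
From \eqref{lawP}--\eqref{sqq} the map $\beta\mapsto\Gamma^\mathsf{m}(\beta)$ is $C^\infty$, strictly decreasing, with $\Gamma^\mathsf{m}(\beta_c^\mathsf{m})=1$, so a first-order Taylor expansion yields $\log\Gamma^\mathsf{m}(\beta_c^\mathsf{m}-\epsilon)=\alpha_\mathsf{m}\epsilon+O(\epsilon^2)$ with $\alpha_\mathsf{m}>0$, and the question reduces to inverting $\delta-h_\beta(\delta)=\alpha_\mathsf{m}\epsilon\,(1+o(1))$.

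The analytic core is to establish constants $0<c_1\leq c_2<\infty$ and $\delta_0>0$, uniform for $\beta$ in a neighborhood of $\beta_c^\mathsf{m}$, such that
\[
-c_2\,\delta^{2/3} \;\leq\; h_\beta(\delta) \;\leq\; -c_1\,\delta^{2/3}\qquad (\delta\in(0,\delta_0]).
\]
The exponent $2/3$ is forced by the diffusive balance noted in the remarks after Theorem~\ref{Thm2}: under $\mathbf{P}_\beta$ the walk $V$ has step variance $\sigma_\beta^2$, so $A_N\asymp\sigma_\beta N^{3/2}$ and $e^{-\delta A_N}$ is non-degenerate precisely on the scale $T\asymp\delta^{-2/3}$. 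For the lower bound I would restrict $\mathbf{E}_\beta(e^{-\delta A_N})$ to the event that the walk is confined to $[-C\sqrt{T},C\sqrt{T}]$ on each of the $N/T$ consecutive windows of length $T=\lfloor\delta^{-2/3}\rfloor$ and returns to $0$ at the window endpoints; on this event $A_N\leq C'N\delta^{-1/3}$, and a local CLT bounds its probability from below by a subexponential (in $N$) factor, yielding $h_\beta(\delta)\geq -c_2\,\delta^{2/3}$. For the upper bound I would chop the path into blocks of length $T$, apply the strong Markov property to factorise the expectation over block endpoints, and bound each block by comparison with the Brownian functional $\mathbf{E}(\exp(-\sigma_\beta\int_0^1|B(t)|\,dt))$ via Donsker's invariance principle, producing a per-block factor at most $\exp(-c_1 T\delta^{2/3})(1+o(1))$.

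Once these bounds are in hand, since $\delta\ll\delta^{2/3}$ as $\delta\downarrow 0$, the quantity $\delta-h_\beta(\delta)$ is squeezed between $c_1\delta^{2/3}$ and $2c_2\delta^{2/3}$, and equating with $\alpha_\mathsf{m}\epsilon(1+o(1))$ produces the claimed $\Theta(\epsilon^{3/2})$ bracketing with explicit $c_1',c_2'$. The hard part will be the upper bound on $h_\beta$: because $V$ is Markovian and $A_N$ couples consecutive blocks through their endpoints, the block decomposition is not directly multiplicative; one needs super-Gaussian tightness of $\max_{i\leq T}|V_i|/\sqrt{T}$ under the tilted law $e^{-\delta A_T}\mathbf{P}_\beta/\mathbf{E}_\beta(e^{-\delta A_T})$, \emph{uniform} in $\beta$ near $\beta_c^\mathsf{m}$ and in the endpoints of the block. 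This is subtle because both $T$ and the tilt depend on $\delta$; a convenient workaround is to insert an auxiliary pinning $\{V_T=0\}$ at each block endpoint and compensate via a local-CLT polynomial factor, restoring an i.i.d.\ block structure modulo a $(\log\tfrac{1}{\delta})^{O(1)}$ correction absorbed into the constants $c_1,c_2$.
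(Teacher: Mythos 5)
Your high-level strategy matches the paper's route to the stronger Theorem~\ref{Thm2} (which subsumes the quoted bound): invoke Theorem~\ref{Thm3} to reduce the problem to the implicit equation $\delta-h_\beta(\delta)=\log\Gamma^\mathsf{m}(\beta)$, Taylor-expand the right-hand side, prove two-sided $\delta^{2/3}$ estimates for $h_\beta$ via a block decomposition at scale $T\asymp\delta^{-2/3}$, and invert. That skeleton is correct. However, both of your proposed block arguments carry a logarithmic leak that cannot, contrary to your last sentence, be ``absorbed into the constants.''

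Concretely: for the lower bound you pin $V$ to $0$ at each window endpoint and invoke a local CLT. The per-block probability of returning to $0$ at time $T$ (while confined) is $\Theta(1/\sqrt{T})$, so the product over $\asymp N/T$ blocks produces the exponential rate $\tfrac{1}{T}\bigl(\tfrac12\log T+O(1)\bigr)$; with $T\asymp\delta^{-2/3}$ this is $\Theta\bigl(\delta^{2/3}\log\tfrac1\delta\bigr)$, not $\Theta(\delta^{2/3})$. A genuine $(\log\tfrac1\delta)$ factor in the bound $-h_\beta(\delta)\lesssim\delta^{2/3}\log\tfrac1\delta$ inverts to $\tilde f^\mathsf{m}(\beta_c^\mathsf{m}-\epsilon)\gtrsim(\epsilon/\log\tfrac1\epsilon)^{3/2}$, which is strictly weaker than $c_1\epsilon^{3/2}$. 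The paper sidesteps this by \emph{not} pinning: in \eqref{eq:lowbouEarea}--\eqref{eq:bpt} the block endpoint is required to land in an interval of width $\eta\sqrt{\Delta}$ (probability $\Theta(1)$), and a shift argument (paths $\Pi_1^x,\Pi_2,\Pi'_2$) controls the dependence on the starting point at the cost of a harmless factor $e^{-\eta T}$, with $\eta$ sent to $0$ last. Your upper-bound workaround (``insert $\{V_T=0\}$ and compensate via a local-CLT polynomial factor'') has the same flaw, and it is also unclear how an \emph{inserted} pinning constraint can produce an \emph{upper} bound on $\mathbf{E}_\beta(e^{-\delta A_N})$ without first arguing that non-pinned blocks are negligible; the uniform-in-endpoint tightness you invoke is exactly what needs to be proved. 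The paper's upper bound avoids the issue entirely via the monotonicity Lemma~\ref{ineEarea}, which shows $|x|\mapsto\mathbf{E}_{\beta,x}(e^{-\delta A_n})$ is non-increasing, so the supremum over block starting points is attained at $x=0$ and the factorization in \eqref{eq:uppbouEarea}--\eqref{eq:del} is genuinely multiplicative over i.i.d.\ blocks, with no pinning and no polynomial correction. You should replace your pinning strategy by the interval-endpoint argument on the lower bound side and supply (or prove) the monotonicity-in-$|x|$ fact on the upper bound side; without one of these, the proposal does not yield the stated power law.
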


%

%

With the present paper, we take the analysis of the phase transition two steps further (see Theorem \ref{Thm2}). In the first step, we establish the precise asymptotic: $\tilde f(\beta_c-\gep)\sim \gamma \gep^{3/2}$ as $\gep \searrow 0$  with $\gamma$ an explicit constant. In the second step, we give an expression of $\gamma$ in terms of the free energy of an auxiliary continuous model, that is $\mathsf{F}_c=\lim_{T\to \infty}\frac{1}{T}\log\mathbf{E}[\exp(-\int_0^T |B(t)|dt)]$. Moreover, the Laplace transform of $\int_0^T |B(t)|dt$ was computed by Kac in \cite{K46} and this allows us to express $\mathsf{F}_c$ with the smallest zero (in modulus) of the derivative of the Airy function.

The question of the geometric conformation adopted by the polymer inside the collapsed phase 
has been raised and discussed by physicists in several papers, as  for instance \cite{POBG93}. It was believed that the monomers arrange themselves in a succession of long vertical stretches of opposite directions that constitute large beads. In this paper, we prove with Theorem \ref{Thm4}, that the polymer makes \emph{only one macroscopic bead} and that the number of monomers (located at the beginning and at the end of the polymer) which do not belong to this bead grows at most like $(\log L)^4$.  We also make rigorous the conjecture concerning the horizontal extension of the polymer, since we identify the limit in probability 
of $N_L/\sqrt{L}$, which turns out to be  the constant extracted from
an optimization procedure.  We also establish the convergence of
properly rescaled lower and upper envelopes to a deterministic Wulff
shape. In particular, the typical vertical displacement of the middle
point, the $L/2$-th monomer in a chain of length $L$, is of order $\sqrt{L}$.

There are numerical evidences that  the vertical displacement of the
endpoint grows as $L^{1/4}$ (see \cite{POBG93},  table II page
2394). This turns out to be a consequence of the typical behavior of the fluctuations of
the envelopes around the Wulff shape, and this is not the topic of the
present paper.

\bigskip
\emph{Finally}, let us stress the fact that the convergence, in the
collapsed phase, to a deterministic Wulff shape (see Theorem E) comes
from a fairly complex procedure that needs to establish three properties:
\begin{enumerate}[(i)]
\item The horizontal extension $N_L$ is of order $\sqrt{L}$.
\item There is only one macroscopic bead
\item When conditioned to be abnormally large, the geometric area of
  the associated $V$ random walk ($\sum_i \valabs{V_i}$) is
  close to the modulus of its algebraic counterpart ($\valabs{\sum V_i}$).
\end{enumerate}
There is no clear order in which to establish these properties and the
proofs are intricate. For example, we need weak versions of (i) and
(iii) to prove (ii) and then get a stronger version of (i).

\section{Preparation : the main tools.}
In this section, we introduce the three main tools that are used in this paper. In Section \ref{sec:rep} we show how the partition function can be rewritten in terms of the random walk $V$ of law $\mathbf{P}_{\beta}$ (recall \ref{lawP}) and how studying this random walk under an appropriate conditioning can be used to derive some path properties under the polymer measure.
In Section \ref{sec:chbeta}, we define the function $\delta \mapsto \hbeta(\delta)$ that appears in the expression of the excess free energy in Theorem \ref{Thm3} and we study its regularity. 
In Section \ref{sec:lcltp}, we consider the probability of some large deviations events under $\mathbf{P}_{\beta}$, 
and following Dobrushin and Hryniv~\cite{DH96}, we introduce an appropriate tilting under which the probability of such events 
decays only polynomially fast. 

\subsection{Probabilistic representation of the  partition function}\label{sec:rep}
In the first part of  this section we prove formula  \eqref{eq:partfunc} and we show how the polymer measure can be expressed as the image measure by an appropriate transformation of the geometric random walk $V$ introduced in \eqref{lawP}.  In the second part of the section, we focus on those trajectories that 
make only one bead and we show that, in terms of the auxiliary random walk $V$, these beads become excursions away from the origin. 

\begin{figure}[ht]\center
	\includegraphics[width=.65\textwidth]{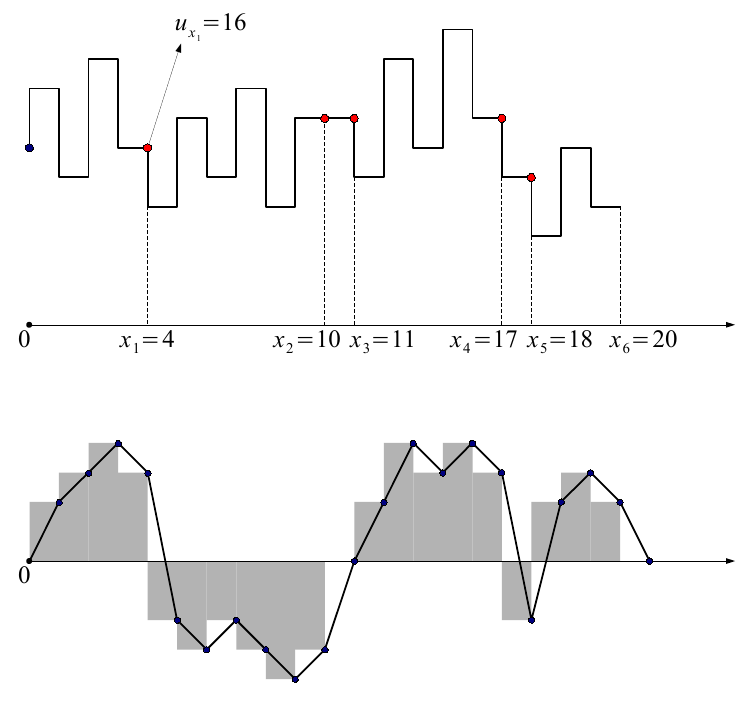}
	\caption{An example of a trajectory $l=(l_i)_{i=1}^{20}$ with 6 beads is drawn on the  upper picture.  The auxiliary random walk $V$ associated with $l$, i.e., $(V_i)_{i=0}^{21}=(T_{20})^{-1}(l)$  is drawn on the lower picture.}
	\label{fig:transform}
\end{figure}

\subsubsection{Auxiliary random walk}
We display here the details of the proof of formula  \eqref{eq:partfunc}.  Recall (\ref{defLL}--\ref{pff}) and note that the $\tilde{\wedge}$ operator can be written as
\begin{equation}
x\;\tilde{\wedge}\;y=\left(|x|+|y|-|x+y|\right)/2,\quad\forall x,y\in\mathbb{Z}.
\end{equation}
Hence, for $\beta>0$ and $L\in\mathbb{N}$, the partition function in \eqref{pff} becomes
\begin{align}\label{ls}
\nonumber Z_{L,\beta}
&=\sum_{N=1}^{L} \sum_{\substack{l\in\mathcal{L}_{N,L}\\l_0=l_{N+1}=0}}\exp{\Bigl(\beta\sum_{n=1}^N{|l_n|}-\tfrac{\beta}{2}\sum_{n=0}^N{|l_n+l_{n+1}|}\Bigr)}\\
&=c_\beta\, e^{\beta L} \sum_{N=1}^{L}\left(\tfrac{c_\beta}{e^\beta}\right)^N\sum_{\substack{l\in\mathcal{L}_{N,L}
\\l_0=l_{N+1}=0}}\prod_{n=0}^{N}\frac{\exp{\Bigl(-\tfrac{\beta}{2}|l_n+l_{n+1}|\Bigr)}}{c_\beta},
\end{align}
where $c_\beta$ was defined in \eqref{lawP}.
At this stage, we pick $N\in \{1,\dots,L\}$ and we introduce the  one-to-one correspondence
$T_N:\cV_{N+1,L-N}\mapsto \cL_{N,L}$ defined as $T_N(V)_i=(-1)^{i-1} V_i$ for all $i\in \{1,\dots N\}$. We pick $l\in \cL_{N,L}$, we consider  $V=(T_N)^{-1}(l)$ (see Fig. \ref{fig:transform}) and we note that the increments $(U_i)_{i=1}^{N+1}$ of $V$ necessarily satisfy $U_i:=(-1)^{i-1}(l_{i-1}+l_i)$. Thus, the partition function in \eqref{ls} becomes 
\begin{equation}\label{tgh}
Z_{L,\beta}=c_\beta  e^{\beta L} \sum_{N=1}^{L} \left(\tfrac{c_\beta}{e^\beta}\right)^N \sum_{V\in \cV_{N+1,L-N}} \mathbf{P}_{\beta}(V),
\end{equation}
which immediately implies \eqref{eq:partfunc}. 
A useful consequence of formula \eqref{tgh} is that, once conditioned on taking a given number of horizontal steps $N$, the polymer measure is exactly the image measure by the $T_N$-transformation of the geometric random walk $V$ conditioned to return to the origin after N+1 steps and 
to make a geometric area $L-N$, i.e.,  
\begin{equation}\label{transf2}
P_{L,\beta}\bigl(l\in\cdot \mid N_L(l)=N\bigr)=\mathbf{P}_\beta\bigl(T_N(V)\in\cdot \mid V_{N+1}=0, A_N=L-N\bigr).
\end{equation}

\subsubsection{From beads to excursions}
We  define $\Omega_L^\mathsf{o}$ as the subset of $\Omega_L$ containing those trajectories $l\in\Omega_L$ that have only one bead, i.e. $n_L(l)=1$. Thus, we can rewrite $\Omega^\mathsf{o}_L:=\bigcup_{N=1}^L\mathcal{L}^\mathsf{o}_{N,L}$, where $\mathcal{L}^\mathsf{o}_{N,L}$ is the subset of $\mathcal{L}_{N,L}$ defined as
\begin{equation}\label{deftu}
\textstyle\mathcal{L}^\mathsf{o}_{N,L}=\bigl\{l\in\mathcal{L}_{N,L}\colon\,l_i\;\tilde{\wedge}\;l_{i+1}\neq 0\ \forall i\in\{1,\dots,N-1\}\bigr\},
\end{equation}
and we denote by $Z^\mathsf{o}_{L,\beta}$ the contribution to the partition function of those trajectories in $\Omega^\mathsf{o}_L$, i.e.,
\begin{equation}\label{eq:dist1}
Z^\mathsf{o}_{L,\beta}=\sum_{l\in\Omega_L^\mathsf{o}}e^{\beta H_{L}(l)}.
\end{equation}
We let also $\mathcal{V}_{n,k}^{+}$ be the subset containing those trajectories that return to the origin after $n$ steps, satisfy $A_n=k$ and are strictly positive on $\{1,\dots,n\}$, i.e.,
\begin{equation}\label{defVV+}
\mathcal{V}_{n,k}^+:=\{V\colon\,V_n=0,\,A_n=k,\,V_i>0\ \forall i\in\{1,\dots,n-1\}\}.
\end{equation}
By mimicking \eqref{ls} and by noticing that by the $T_N$-transformation, the subset  $\mathcal{L}^\mathsf{o}_{N,L}$ becomes $\mathcal{V}^+_{N+1,L-N}$ we obtain  
\begin{equation}\label{eq:dist11}
Z^\mathsf{o}_{L,\beta}=2\, c_\beta\, e^{\beta L} \sum_{N=1}^{L}\left(\Gamma(\beta)\right)^N\mathbf{P}_{\beta}(\mathcal{V}^+_{N+1,L-N}).
\end{equation}

\subsection{Construction and regularity of \texorpdfstring{$\hbeta$}{h}}\label{sec:chbeta}
We define the function $\hbeta$ in a slightly different way from \eqref{eq:funch}, but we will see at the end of this section that the two definitions are equivalent. For $N\in\mathbb{N},\delta\geq0$, define
\begin{equation}\label{eq:funch2}
\hnbeta(\delta):=\frac{1}{N}\log\mathbf{E}_\beta\bigl(e^{-\delta A_N}\mathbf{1}_{\{V_{N}=0\}}\bigr)\quad\text{and let}\quad \hbeta(\delta)=\lim_{N\to\infty}\hnbeta(\delta).
\end{equation} 

\begin{lemma}\label{funch}
(i) $\hbeta(\delta)$ exists and is finite, non-positive for all $\beta>0,\delta\geq0$.\\
(ii) $\delta\mapsto \hbeta(\delta)$ is continuous, convex and non-increasing on $[0,\infty)$.
\end{lemma}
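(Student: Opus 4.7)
The plan is to base everything on the super-multiplicativity of
\begin{equation*}
Z_N(\delta) := \mathbf{E}_\beta\bigl(e^{-\delta A_N}\mathbf{1}_{\{V_N=0\}}\bigr),
\end{equation*}
so that $h_{N,\beta}(\delta) = \frac{1}{N}\log Z_N(\delta)$. Pick $M,N\in\mathbb{N}$, insert the additional indicator $\mathbf{1}_{\{V_M=0\}}$ inside the expectation defining $Z_{M+N}(\delta)$, and invoke the Markov property at time $M$: conditionally on $V_M=0$, the increments $(v_{M+1},\dots,v_{M+N})$ are independent of $\sigma(v_1,\dots,v_M)$ and have the same law as $(v_1,\dots,v_N)$, while the area splits as $A_{M+N}=A_M+\sum_{i=M+1}^{M+N}|V_i|$ with the second summand an independent copy of $A_N$. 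I would then obtain $Z_{M+N}(\delta)\geq Z_M(\delta)\,Z_N(\delta)$, so that $N\mapsto \log Z_N(\delta)$ is super-additive and Fekete's lemma gives $h_\beta(\delta)=\lim_N h_{N,\beta}(\delta)=\sup_N h_{N,\beta}(\delta)\in[-\infty,0]$.

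The two bounds in (i) are then immediate. Since $e^{-\delta A_N}\leq 1$, one gets $Z_N(\delta)\leq \mathbf{P}_\beta(V_N=0)\leq 1$ and hence $h_\beta(\delta)\leq 0$. For the lower bound, restricting the expectation to the single trajectory with $v_1=\dots=v_N=0$ (which has probability $c_\beta^{-N}$ and makes $A_N=0$) yields $Z_N(\delta)\geq c_\beta^{-N}$, so $h_\beta(\delta)\geq -\log c_\beta>-\infty$. For (ii), the monotonicity of $\delta\mapsto e^{-\delta A_N}$ immediately passes to $h_{N,\beta}$ and to the limit. Convexity of $\delta \mapsto \log Z_N(\delta)$ comes from H\"older's inequality (it is the logarithmic moment generating function of $A_N$ against the finite nonnegative measure $\mathbf{1}_{\{V_N=0\}}\,d\mathbf{P}_\beta$); a pointwise limit of convex functions being convex, $h_\beta$ is convex.

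For continuity, finite convex functions are automatically continuous on the interior $(0,\infty)$, so only the boundary point $\delta=0$ requires work. Writing $h_\beta=\sup_N h_{N,\beta}$, each $h_{N,\beta}$ is continuous in $\delta$, so $h_\beta$ is lower semicontinuous, which gives $\liminf_{\delta\to 0^+}h_\beta(\delta)\geq h_\beta(0)$; combined with the monotonicity bound $h_\beta(\delta)\leq h_\beta(0)$, this yields $\lim_{\delta\to 0^+}h_\beta(\delta)=h_\beta(0)$. The only step where care is required is the super-multiplicativity argument itself, namely that conditioning on $V_M=0$ decouples both the area and the return event into independent analogues — once this factorization is written carefully, everything else is a routine application of Fekete's lemma and elementary convex analysis.
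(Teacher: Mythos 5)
Your proof is correct and mirrors the paper's: super-additivity by inserting $\mathbf{1}_{\{V_M=0\}}$ and applying the Markov property, H\"older's inequality for convexity, monotonicity from $A_N\geq 0$, and continuity at $0$ from monotonicity plus lower semicontinuity of the supremum $h_\beta=\sup_N h_{N,\beta}$ (the paper phrases this as an exchange of suprema, which is the same observation). Your explicit lower bound $Z_N(\delta)\geq c_\beta^{-N}$ is a minor improvement, making the finiteness claim fully self-contained where the paper leaves it implicit.
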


\begin{proof}
(i) For $N,M\in\mathbb{N}$, we restrict the partition of size $N+M$ to those trajectories that return to the origin at time $N$ and use the Markov property to obtain
\begin{equation}
\mathbf{E}_\beta\bigl(e^{-\delta A_{N+M}}\mathbf{1}_{\{V_{N+M}=0\}}\bigr)\geq\mathbf{E}_\beta\bigl(e^{-\delta A_N}\mathbf{1}_{\{V_{N}=0\}}\bigr)\mathbf{E}_\beta\bigl(e^{-\delta A_M}\mathbf{1}_{\{V_{M}=0\}}\bigr).
\end{equation}
Thus, $\{\log\mathbf{E}_\beta\bigl(e^{-\delta A_N}\mathbf{1}_{\{V_{N}=0\}}\bigr)\}_{N\in\mathbb{N}}$ is a super-additive sequence that is bounded above by $0$ and therefore 
the limit in \eqref{eq:funch2} exists, is finite and satisfies
\begin{equation}\label{eq:hbetasup}
\hbeta(\delta)=\sup_{N\in\mathbb{N}}\frac{1}{N}\log\mathbf{E}_\beta\bigl(e^{-\delta A_N}\mathbf{1}_{\{V_{N}=0\}}\bigr)\leq 0.
\end{equation}
(ii) The fact that $A_N\geq 0$ for all $N\in\mathbb{N}$ immediately entails that $\delta\mapsto  \hbeta(\delta)$ is non-increasing on $[0,\infty)$. By H\"{o}lder's inequality, the function $\delta\mapsto \hnbeta(\delta)$ is convex for all $N\in\mathbb{N}$ and hence so is $\delta\mapsto \hbeta(\delta)$. Convexity and finiteness imply continuity on $(0,\infty)$. In order to prove the continuity at $0$, we first note that $\lim_{\delta\to0}\hbeta(\delta)=\sup_{\delta\geq0}\hbeta(\delta)$. Then, with the help of formula \ref{eq:hbetasup} and via an exchange of suprema we obtain
\begin{align}\label{suprema} 
\lim_{\delta\to0}\hbeta(\delta)&=\sup_{\delta\geq0}\sup_{N\in\mathbb{N}}\frac{1}{N}\log\mathbf{E}_\beta\bigl(e^{-\delta A_N}\mathbf{1}_{\{V_{N}=0\}}\bigr)\\
\nonumber &=\sup_{N\in\mathbb{N}}\frac{1}{N}\log\mathbf{P}_\beta(V_{N}=0)=0.
\end{align}

\end{proof}
It remains to show that the two definitions of $\hbeta$ in \eqref{eq:funch} and \eqref{eq:funch2} coincide. To that aim it suffices to show that
\begin{equation}\label{eq:ppr}
\limsup_{N\to\infty}\frac{1}{N}\log\mathbf{E}_\beta\bigl(e^{-\delta A_N}\bigr)\leq\lim_{N\to\infty}\frac{1}{N}\log\mathbf{E}_\beta\bigl(e^{-\delta A_N}\mathbf{1}_{\{V_N=0\}}\bigr).
\end{equation}
We set $\mathcal{I}_{N^2}:=[-N^2,N^2]\cap\mathbb{Z}$\, and we decompose $\mathbf{E}_\beta\bigl(e^{-\delta A_N}\bigr)$ into the two partition functions $C_{N,\beta}$ and $B_{N,\beta}$ defined as
\begin{equation}\label{eq:defAB}
C_{N,\beta}=\mathbf{E}_\beta\bigl(e^{-\delta A_N}\mathbf{1}_{\{V_N\in\mathcal{I}_{N^2}\}}\bigr)\quad\text{and}\quad B_{N,\beta}=\mathbf{E}_\beta\bigl(e^{-\delta A_N}\mathbf{1}_{\{V_N\notin\mathcal{I}_{N^2}\}}\bigr).
\end{equation}
Since $A_N\geq 0$ and  since $\mathbf{E}_\beta\etc{e^{\frac{\beta |U_1|}{4}}}<\infty$,  Markov's inequality gives
\begin{equation}
B_{N,\beta}\leq\mathbf{E}_\beta\etc{\mathbf{1}_{\{V_N\notin\mathcal{I}_{N^2}\}}}\leq \mathbf{P}_\beta\Bigl(\sum_{i=1}^N|U_i|\geq N^2\Bigr)\leq\frac{\mathbf{E}_\beta\etc{e^{\frac{\beta |U_1|}{4}}}^N}{e^{(\beta/4)N^2}},
\end{equation}
which immediately implies that $\limsup_{N\to\infty}\frac{1}{N}\log B_{N,\beta}=-\infty$. Consequently
\begin{equation}\label{eq:btps}
\limsup_{N\to\infty}\frac{1}{N}\log\mathbf{E}_\beta\bigl(e^{-\delta A_N}\bigr)=\limsup_{N\to\infty}\frac{1}{N}\log C_{N,\beta},
\end{equation}
and since the cardinality of $\mathcal{I}_{N^2}$ grows polynomially, the proof of \eqref{eq:ppr} will be complete once we show that 
\begin{equation}\label{eq:hbetauppb2}
\limsup_{N\to\infty}\frac{1}{N}\log \sup_{x\in\mathcal{I}_{N^2}}\mathbf{E}_\beta\bigl(e^{-\delta A_N}\mathbf{1}_{\{V_N=x\}}\bigr)\leq\lim_{N\to\infty}\frac{1}{N}\log\mathbf{E}_\beta\bigl(e^{-\delta A_N}\mathbf{1}_{\{V_N=0\}}\bigr).
\end{equation}
For $x\in \Z$, we denote by $\mathbf{P}_{\beta,x}$ the law of  $x+V$ where $V$ is the random walk of law $\mathbf{P}_\beta$.  We consider the partition function of size $2N$ and use Markov property at time $N$ to obtain
\begin{equation}\label{eq:partE2N}
\mathbf{E}_\beta\bigl(e^{-\delta A_{2N}}\mathbf{1}_{\{V_{2N}=0\}}\bigr)\geq \mathbf{E}_\beta\bigl(e^{-\delta A_{N}}\mathbf{1}_{\{V_{N}=x\}}\bigr)\mathbf{E}_{\beta,x}\bigl(e^{-\delta A_{N}}\mathbf{1}_{\{V_{N}=0\}}\bigr),\quad x\in\mathbb{Z}.
\end{equation}
By using the time reversal property of the random walk $V$, we can assert that $(V_N-V_{N-n},\,0\leq n\leq N)\overset{d}{=}(V_n-V_0,\,0\leq n\leq N)$ and consequently, for all $x\in \Z$, it comes that
\begin{align}\label{inegi}
\nonumber\mathbf{E}_{\beta,x}\bigl(e^{-\delta\sum_{n=1}^N|V_n|}\mathbf{1}_{\{V_{N}=0\}}\bigr)
&=\mathbf{E}_\beta\bigl(e^{-\delta\sum_{n=1}^N|V_n+x|}\mathbf{1}_{\{V_N=-x\}}\bigr)\\
\nonumber&=\mathbf{E}_\beta\bigl(e^{-\delta\sum_{n=1}^N|V_N-V_{N-n}+x|}\mathbf{1}_{\{V_{N}=-x\}}\bigr)\\
&=\mathbf{E}_\beta\bigl(e^{-\delta\sum_{n=1}^{N-1}|V_n|}\mathbf{1}_{\{V_{N}=-x\}}\bigr).
\end{align}
Thanks to the symmetry of $V$ and since $\sum_{n=1}^{N-1}|V_n|\leq A_N$,
the inequalities \eqref{eq:partE2N} and \eqref{inegi} allow us to write 
\begin{equation}\label{inegii}
\mathbf{E}_\beta\bigl(e^{-\delta A_{2N}}\mathbf{1}_{\{V_{2N}=0\}}\bigr)\geq\Big[\sup_{x\in\mathcal{I}_{N^2}}\mathbf{E}_\beta\bigl(e^{-\delta A_N}\mathbf{1}_{\{V_N=x\}}\bigr)\Big]^2.
\end{equation}
It remains to apply $\frac{1}{2N}\log$ in both sides of \eqref{inegii} and to let $N\to\infty$ to obtain \eqref{eq:hbetauppb2}, which completes the proof.

\subsection{Large deviation estimates}\label{sec:lcltp}
In this section, we introduce the techniques that will be required to estimate the probability of some large deviation events associated with trajectories making a large arithmetic area. Such estimates will be needed in Section \ref{geo} to approximate the probability that, under the polymer measure, the trajectories make only one bead. 

Following  Dobrushin and Hryniv in \cite{DH96}, for $n\in\mathbb{N}$, we define
\begin{equation}\label{eq:Yn}
Y_n:=\tfrac{1}{n}(V_0+V_1+\dots+V_{n-1}),
\end{equation}
and for a given $q\in (0,\infty)\cap \tfrac{\N}{n}$, we focus on both probabilities $\mathbf{P}_\beta(Y_n=n q,\,V_n=0)$ and $\mathbf{P}_\beta(Y_n=n q,\,V_n=0,\,V_i>0\,\forall i\in\{1,\ldots,n-1\})$. Our aim is to identify the exponential rate at which such probabilities are decreasing and their asymptotic polynomial correction. To that aim,  
we will use an \textit{exponential tilting} of the probability measure $\mathbf{P}_\beta$ (through the Cramer transform) in combination with a local limit theorem. Under the tilted probability measure the event $\{Y_n=n q,\,V_n=0\}$ is not of large deviation type anymore since its probability decays at  polynomial speed instead of exponential speed, as will be seen in Section \ref{sec:lclt}.

For the ease of notations, we set $\Lambda_n:=(Y_n,V_n)$ and we denote its logarithmic moment generating function by $\Ll_{\Lambda_n}({\bf h})$ for ${\bf h}:=(h_0,h_1)\in\mathbb{R}^2$, i.e.,
\begin{equation}\label{eq:LlambdaN}
{\textstyle \Ll_{\Lambda_n}({\bf h}):=\log\mathbf{E}_\beta\bigl[e^{h_0Y_n+h_1V_n}\bigr]=\sum_{i=1}^n \Ll\Bigl(\bigl(1-\tfrac{i}{n}\bigr)h_0+h_1\Bigr).}
\end{equation}
Clearly, $\Ll_{\Lambda_n}({\bf h})$ is finite for all ${\bf h} \in \cD_n$ with
\begin{equation}
{\textstyle\mathcal{D}_n:=\Bigl\{(h_0,h_1)\in\mathbb{R}^2\colon h_1\in\bigl(-\tfrac{\beta}{2},\tfrac{\beta}{2}\bigr),\ (1-\tfrac{1}{n})h_0+h_1\in\bigl(-\tfrac{\beta}{2},\tfrac{\beta}{2}\bigr)\Bigr\}.}
\end{equation}

With the help of \eqref{eq:LlambdaN} and for ${\bf
  h}=(h_{0},h_{1})\in\mathcal{D}_n$, we define the ${\bf h}$-tilted distribution by
\begin{equation}\label{definH}
\frac{\text{d}\mathbf{P}_{n,{\bf h}}}{\text{d}\mathbf{P}_{\beta}}(V)=e^{h_{0}Y_n+h_{1}V_n-\Ll_{\Lambda_n}(H)}.
\end{equation}
For a given $n\in \N$ and $q\in \frac{\N}{n}$, the exponential tilt is given by ${\bf h}_n^q:=(h_{n,0}^q,h_{n,1}^q)$ which, by Lemma \ref{exist} in Section \ref{sec:mgf}, is the unique solution of  
\begin{equation}\label{eq:tildeEC}
\mathbf{E}_{n,{\bf h}}(\tfrac{\Lambda_n}{n})=\nabla\bigl[\tfrac{1}{n}\Ll_{\Lambda_n}\bigr](\mathbf{h})=(q,0),
\end{equation}
and therefore, we have the equality
\begin{equation}\label{revt}
\mathbf{P}_\beta\bigl(\Lambda_n=(nq,0)\bigr)=\mathbf{P}_{n,{\bf h}_n^q}\bigl(\Lambda_n=(nq,0)\bigr)
e^{ n \big( -h_{n,0}^q\,q +\tfrac{1}{n}\Ll_{\Lambda_n}({\bf h}_n^q)\big)}.
\end{equation}
From \eqref{revt} it is easy to deduce that the exponential decay rate of $\mathbf{P}_\beta\bigl(\Lambda_n=(nq,0)\bigr)$ is given by the quantity $-h_{n,0}^q\,q +\frac1n \Ll_{\Lambda_n}({\bf h}_n^q) $ and that the polynomial correction is associated with $\mathbf{P}_{n,{\bf h}_n^q}\bigl(\Lambda_n=(nq,0)\bigr)$. To be more specific, we first state a Proposition which gives a local central limit theorem for the tilted law $\mathbf{P}_{n,{\bf h}_n^q}$. 
\begin{proposition}\label{lem:bHtilde}
For $[q_1,q_2]\subset(0,\infty)$, there exist $C>0,n_0>0$ such that
for all\footnote{to be thorough, we should restrict ourselves to $q$
  such that $n^2 q \in \mathbb{N}$. To ease notations, we shall omit
  this restriction in the sequel} $q\in[q_1,q_2]$ and $n\geq n_0$ we have
\begin{equation}
\tfrac{1}{Cn^2}\leq\mathbf{P}_{n,{\bf h}_n^q}(Y_n=nq,\,V_n=0)\leq\tfrac{C}{n^2}.
\end{equation}
\end{proposition}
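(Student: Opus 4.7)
The plan is to recognize this as a two-dimensional lattice local central limit theorem for $(Y_n,V_n)$ evaluated at its mean. Under $\mathbf{P}_{n,H_n^q}$ the increments $v_1,\dots,v_n$ remain independent, with $v_j$ distributed as $\mathbf{P}_\beta$ exponentially tilted by $\alpha_{j,n}:=(1-j/n)h_{n,0}^q+h_{n,1}^q$. Writing $V_n=\sum_{j=1}^n v_j$ and $Y_n=\sum_{j=1}^{n-1}(1-j/n)v_j$, the vector $(Y_n,V_n)$ becomes a sum of $n$ independent but non-identically distributed increments whose mean equals $(nq,0)$ by the very definition \eqref{eq:tildeEC} of the tilt. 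The $1/n^2$ order is then the product of the two lattice spacings $1/n$ (for $Y_n\in\frac{1}{n}\mathbb{Z}$) and $1$ (for $V_n\in\mathbb{Z}$) by the two-dimensional Gaussian density at the mean, whose covariance matrix is of order $n$.

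\textbf{Step 1 (control of the tilt).} I would first show that for $q\in[q_1,q_2]\subset(0,\infty)$ the solution $H_n^q$ of \eqref{eq:tildeEC} stays in a compact subset of the interior of $\mathcal{D}$, uniformly in $n\geq n_0$. Strict convexity of $H\mapsto \tfrac{1}{n} L_{\Lambda_n}(H)$, together with the fact that $\tfrac{1}{n} L_{\Lambda_n}$ is a Riemann sum for $L_\Lambda$ that converges with its first two derivatives uniformly on compacts of $\mathcal{D}$, identifies the limit $H^q:=\lim_n H_n^q$ as the unique interior solution of $\nabla L_\Lambda(H)=(q,0)$ and provides uniform control on $H_n^q$. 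In particular the tilts $\alpha_{j,n}$ all lie in a fixed sub-interval $[-\beta/2+\eta,\beta/2-\eta]$ of $(-\beta/2,\beta/2)$ for some $\eta>0$ depending only on $[q_1,q_2]$, which yields uniform bounds on all exponential moments of the tilted increments.

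\textbf{Step 2 (covariance).} By independence,
\begin{equation*}
\Sigma_n := \mathrm{Cov}_{n,H_n^q}(Y_n,V_n) = \sum_{j=1}^n L''(\alpha_{j,n})\, M(j/n),\qquad M(x):=\begin{pmatrix}(1-x)^2 & 1-x \\ 1-x & 1\end{pmatrix},
\end{equation*}
so, by Step 1, $\Sigma_n/n$ converges uniformly in $q\in[q_1,q_2]$ to $\Sigma^q:=\int_0^1 L''(xh_0^q+h_1^q)\,M(x)\,dx$. Cauchy--Schwarz applied with the strictly positive weight $L''$ to the non-constant function $1-x$ gives $\det\Sigma^q>0$ strictly; consequently $\det\Sigma_n\asymp n^2$ with uniform constants over $[q_1,q_2]$.

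\textbf{Step 3 (local CLT) and the main obstacle.} The core step is the two-dimensional lattice LCLT at the mean. Fourier inversion on the torus $(-n\pi,n\pi]\times(-\pi,\pi]$ gives
\begin{equation*}
\mathbf{P}_{n,H_n^q}(Y_n=nq,V_n=0) = \frac{1}{(2\pi)^2 n}\int_{-n\pi}^{n\pi}\!\int_{-\pi}^{\pi} e^{-i\xi_1 nq}\,\widehat\phi_n(\xi_1,\xi_2)\,d\xi_2\,d\xi_1,
\end{equation*}
with $\widehat\phi_n(\xi_1,\xi_2)=\prod_{j=1}^n\mathbf{E}_{n,H_n^q}\bigl[e^{i\xi_1(1-j/n)v_j+i\xi_2 v_j}\bigr]$. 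On the scale $\xi=u/\sqrt n$ a second-order Taylor expansion combined with Step 2 extracts the Gaussian main term $\frac{1}{2\pi n^2\sqrt{\det\Sigma^q}}$, which delivers both sides of the claimed inequality since $\det\Sigma^q$ is uniformly bounded above and below. The main obstacle is to discard the tail of the integral uniformly in $q$: the $v_j$ are not i.i.d., so a single non-lattice estimate does not suffice. Thanks to Step 1 the tilt parameters $\alpha_{j,n}$ live in a compact subset of $(-\beta/2,\beta/2)$ and, since $\mathbf{P}_\beta$ charges every integer, there exists $c>0$ with $|\mathbf{E}_{\alpha_{j,n}}[e^{i\xi v_j}]|\leq 1-c\sin^2(\xi/2)$ for all $j,n$ and $\xi\in(-\pi,\pi]$. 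Combining this uniform non-lattice estimate with the fact that the weights $1-j/n$ sweep densely through $[0,1]$ (so that for any $(\xi_1,\xi_2)$ outside a neighbourhood of the origin a positive fraction of the factors in $\widehat\phi_n$ has modulus uniformly below $1$) yields exponential decay of the tail integral and closes the argument with constants uniform over $[q_1,q_2]$.
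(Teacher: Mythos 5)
Your proposal is correct and follows essentially the same route as the paper: exponential tilting to bring the event to the center, uniform control of the tilt $H_n^q$ on $[q_1,q_2]$, a two-dimensional lattice local CLT proved via Fourier inversion with a Gaussian main term (covariance $\asymp n$ from the Hessian of $L_\Lambda$), and a tail bound exploiting that the weights $1-j/n$ force a positive fraction of the characteristic-function factors away from modulus $1$ outside a neighbourhood of the origin. This is exactly the Dobrushin--Hryniv strategy that the paper implements through Lemmas \ref{diffeo}--\ref{comp}, Proposition \ref{convunif}, and Proposition \ref{prop:lclt} (with its four-region splitting $J_1^{(q)},\dots,J_4^{(q)}$ of the Fourier integral).
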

\noindent 

The following  Proposition shows that the exponential decay rate induced by
the change of probability in \eqref{definH} can be controlled
uniformly in $n$.
\begin{proposition}[Decay rate of large area probability]\label{convunif}
For $[q_1,q_2]\subset(0,+\infty)$, there exist $c_1,c_2>0$ and $n_0\in\N$ such that 
\begin{equation}\label{eq:convuniff}
\bigl|\bigl[\ullamn({\bf h}_n^q)-h_{n,0}^q\,q\bigr]-\bigl[\Llam(\tilde {\bf h}(q,0))-\tilde h_0(q,0)\,q\bigr]\bigr|\leq \tfrac{c_1}{n},\quad\text{for}\ n\geq n_0,\  q\in [q_1,q_2].
\end{equation}
and 
\begin{equation}\label{eq:convuniff1}
\big|\big|{\bf h}_n^q-\tilde {\bf h}(q,0)\big|\big|\leq \tfrac{c_2}{n},\quad\text{for}\ n\geq n_0,\  q\in [q_1,q_2].
\end{equation}
\end{proposition}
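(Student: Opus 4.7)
The strategy is to exploit that $\frac{1}{n}L_{\Lambda_n}$ is precisely a left Riemann sum approximation of $L_\Lambda$. Indeed, reindexing by $k=n-i$,
\[
\tfrac{1}{n}L_{\Lambda_n}(h_0,h_1)=\tfrac{1}{n}\sum_{k=0}^{n-1} L\bigl(\tfrac{k}{n}h_0+h_1\bigr),\qquad L_\Lambda(h_0,h_1)=\int_0^1 L(xh_0+h_1)\,dx,
\]
and the same identity holds termwise for every derivative. Since $L$ is real-analytic on $(-\beta/2,\beta/2)$ with all derivatives locally bounded, the standard $C^1$ error estimate for left Riemann sums yields, for every compact $K\subset \mathcal{D}$ and every integer $k\geq 0$, a constant $C=C(K,k)$ such that
\[
\bigl\|\nabla^{k}\bigl[\tfrac{1}{n}L_{\Lambda_n}\bigr]-\nabla^{k}L_{\Lambda}\bigr\|_{C^{0}(K)}\leq C/n.
\]
This is the quantitative input driving the whole argument.

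I would first prove \eqref{eq:convuniff1}. By Lemma \ref{diffeo}, $q\mapsto \tilde H(q,0)$ is smooth, so its image $K_0:=\{\tilde H(q,0):q\in[q_1,q_2]\}$ is a compact subset of $\mathcal{D}$. Pick a compact neighborhood $K$ of $K_0$ contained in $\mathcal{D}$. On $K$ the Hessian $\nabla^2 L_\Lambda$ is uniformly positive definite: for $(u,v)\neq (0,0)$,
\[
(u,v)\,\nabla^2 L_\Lambda(H)\,(u,v)^{T}=\int_{0}^{1}(xu+v)^{2}L''(xh_0+h_1)\,dx>0,
\]
because $L''$ is the (strictly positive) variance of $v_1$ under the corresponding tilt. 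Combined with the $O(1/n)$ bounds on $\nabla[\frac{1}{n}L_{\Lambda_n}]-\nabla L_\Lambda$ and $\nabla^{2}[\frac{1}{n}L_{\Lambda_n}]-\nabla^{2}L_\Lambda$ on $K$, a quantitative inverse function theorem implies that for $n$ large enough and every $q\in[q_{1},q_{2}]$ the equation $\nabla[\frac{1}{n}L_{\Lambda_n}](H)=(q,0)$ admits a unique solution in $K$, lying at distance $O(1/n)$ from $\tilde H(q,0)$. Since $\mathcal{D}\subset \mathcal{D}_n$, this solution belongs to $\mathcal{D}_n$, and the uniqueness stated in Lemma \ref{exist} forces it to be $H_{n}^{q}$, which yields \eqref{eq:convuniff1}.

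Given \eqref{eq:convuniff1}, the estimate \eqref{eq:convuniff} follows by a triangle-inequality argument. Write
\begin{align*}
&\bigl[\tfrac{1}{n}L_{\Lambda_n}(H_n^q)-h_{n,0}^q\,q\bigr]-\bigl[L_\Lambda(\tilde H(q,0))-\tilde h_0(q,0)\,q\bigr]\\
&\quad =\bigl[\tfrac{1}{n}L_{\Lambda_n}(H_n^q)-L_\Lambda(H_n^q)\bigr]+\bigl[L_\Lambda(H_n^q)-L_\Lambda(\tilde H(q,0))\bigr]-q\bigl[h_{n,0}^q-\tilde h_0(q,0)\bigr].
\end{align*}
The first bracket is $O(1/n)$ by the Riemann sum estimate applied at $H=H_n^q\in K$; the second is $O(\|H_n^q-\tilde H(q,0)\|)=O(1/n)$ by the mean-value inequality and boundedness of $\nabla L_\Lambda$ on $K$; the third is bounded by $q_2\cdot O(1/n)$ by \eqref{eq:convuniff1}. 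Summing produces the constant $c_1$.

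The main obstacle, which I would address carefully, is the bootstrap used to locate $H_{n}^{q}$ inside the compact $K$ uniformly in $q\in[q_{1},q_{2}]$. A priori $H_{n}^{q}$ is only known to belong to the larger set $\mathcal{D}_n$, so one cannot apply the compact-set Riemann bounds directly to $H_{n}^{q}$. The quantitative inverse function theorem produces both the existence of a solution inside $K$ and the $O(1/n)$ distance bound; the identification of this solution with $H_{n}^{q}$ then rests on the global uniqueness of critical points guaranteed by Lemma \ref{exist} (itself a consequence of the strict convexity of $L_{\Lambda_n}$). Once this matching is made, everything else reduces to the Riemann sum error estimate together with a first-order Taylor expansion.
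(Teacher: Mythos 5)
Your proposal is correct, and for \eqref{eq:convuniff} your triangle-inequality decomposition is exactly the paper's $U+V+W$ split. Where you diverge is in establishing \eqref{eq:convuniff1}, i.e.\ in localizing $H_n^q$. The paper first proves Lemma~\ref{comp}: $H_n^q$ lies in a fixed compact $K_\eta\subset\mathcal{D}$ for all $q\in[q_1,q_2]$ and $n\geq n_0$. This is obtained from the last assertion of Lemma~\ref{diffeo} (that $\|\nabla L_\Lambda\|$ blows up outside $K_\eta$), transferred to $\nabla[\tfrac{1}{n}L_{\Lambda_n}]$ via Corollary~\ref{graconv}, combined with a continuity/monotonicity argument along the curve $h_1\mapsto (h_{0,n}(h_1),h_1)$. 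Once $H_n^q\in K_\eta$ is known, the paper writes $\nabla L_\Lambda(H_n^q)=(q,0)-\epsilon_{n,q}$ with $\|\epsilon_{n,q}\|\leq C_2/n$ and concludes $\|H_n^q-\tilde H(q,0)\|\leq C/n$ purely from the Lipschitz property of the inverse map $\tilde H$ on a slightly enlarged compact $K_{n_1}$ — no inverse function theorem at that stage, since $\tilde H$ is already known to be a global $\mathcal{C}^1$ diffeomorphism (Lemma~\ref{diffeo}). You instead run a Newton–Kantorovich-type quantitative inverse function theorem at $\tilde H(q,0)$ to manufacture a nearby root of $\nabla[\tfrac{1}{n}L_{\Lambda_n}]-(q,0)$ inside the compact $K$, and then invoke the global uniqueness of Lemma~\ref{exist} (strict convexity of $\tfrac{1}{n}L_{\Lambda_n}$) to identify this root with $H_n^q$; your observation that $\mathcal{D}\subset\mathcal{D}_n$ (a convex-combination check) is the correct way to close the loop. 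Your route folds the analogue of Lemma~\ref{comp} and the $O(1/n)$ bound into a single step, at the cost of a quantitative IFT and of uniform control on $\nabla^2[\tfrac{1}{n}L_{\Lambda_n}]$, both of which are available from the Riemann-sum estimate (Lemma~\ref{convu}) and the uniform positive definiteness of $\nabla^2 L_\Lambda$ on $K$; the paper's route is longer but more elementary, resting only on Lipschitz bounds.
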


%

Propositions \ref{lem:bHtilde} and  \ref{convunif}   will be proven in Sections 
\ref{sec:lclt} and  \ref{sec:mgf}, respectively. With the help of \eqref{revt} and by applying Proposition \ref{lem:bHtilde} and Proposition \ref{convunif} we can finally give some sharp upper and lower bounds of $\mathbf{P}_{\beta}(Y_n=nq,\,V_n=0)$.
\begin{proposition}\label{lem:impor}
For $[q_1,q_2]\subset(0,\infty)$, there exist  $C_1>C_2>0$ and $n_0\in \N$ such that for all $q\in[q_1,q_2]$ and $n\geq n_0$ we have
\begin{equation}
\tfrac{C_2}{n^2}\, e^{n \big[-\tilde h_{0}(q,0)\,q +\Ll_{\Lambda}(\tilde {\bf h}(q,0))\big]}\leq\mathbf{P}_{\beta}(Y_n=n q,\,V_n=0)\leq\tfrac{C_1}{n^2}\,e^{n\big[-\tilde h_{0}(q,0)\,q +\Ll_{\Lambda}(\tilde {\bf h}(q,0))\big]}.
\end{equation}
\end{proposition}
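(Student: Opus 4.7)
The plan is to combine the three tools already set up: the exact identity \eqref{revt} coming from the exponential tilting \eqref{definH}, the local central limit estimate of Proposition \ref{lem:bHtilde}, and the comparison between discrete and continuous tilts provided by Proposition \ref{convunif}.

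First, I would start from the change-of-measure identity \eqref{revt}, which reads
\begin{equation*}
\mathbf{P}_\beta\bigl(\Lambda_n=(nq,0)\bigr)=\mathbf{P}_{n,H_n^q}\bigl(\Lambda_n=(nq,0)\bigr)\,\exp\bigl(n\bigl[\tfrac{1}{n}L_{\Lambda_n}(H_n^q)-h_{n,0}^q\,q\bigr]\bigr),
\end{equation*}
valid for any $q\in \N/n$. This identity already separates the probability into a polynomial prefactor (under the tilted law) times the exponential decay rate induced by the tilt.

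Next, I would control the prefactor by invoking Proposition \ref{lem:bHtilde}, which, uniformly in $q\in[q_1,q_2]$ and for $n\geq n_0$, yields
\begin{equation*}
\tfrac{1}{C n^2}\leq \mathbf{P}_{n,H_n^q}\bigl(Y_n=nq,\,V_n=0\bigr)\leq \tfrac{C}{n^2}.
\end{equation*}
This gives the $n^{-2}$ behaviour of the polynomial correction and takes care of the whole prefactor up to multiplicative constants.

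Finally, for the exponential term, I would apply \eqref{eq:convuniff} of Proposition \ref{convunif}: on the interval $[q_1,q_2]$ the discrete rate $\tfrac{1}{n}L_{\Lambda_n}(H_n^q)-h_{n,0}^q\,q$ differs from the continuous rate $L_\Lambda(\tilde H(q,0))-\tilde h_0(q,0)\,q$ by at most $c_1/n$, so that
\begin{equation*}
\exp\bigl(n\bigl[\tfrac{1}{n}L_{\Lambda_n}(H_n^q)-h_{n,0}^q\,q\bigr]\bigr)=e^{O(1)}\,\exp\bigl(n\bigl[L_{\Lambda}(\tilde H(q,0))-\tilde h_0(q,0)\,q\bigr]\bigr),
\end{equation*}
uniformly in $q\in[q_1,q_2]$. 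Absorbing the bounded multiplicative factor $e^{O(1)}$ together with the constant from Proposition \ref{lem:bHtilde} into new constants $C_1>C_2>0$ yields precisely the two-sided bound claimed in the statement. No delicate estimate is needed at this final stage; the real technical content lies upstream in Propositions \ref{lem:bHtilde} and \ref{convunif}, which handle, respectively, the local limit theorem for the tilted walk conditioned on hitting a point with a prescribed cumulated area, and the uniform $O(1/n)$ approximation of the discrete Cramér transform by its continuous analogue.
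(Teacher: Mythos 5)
Your proof is correct and follows precisely the same route that the paper (implicitly) takes: the paper simply says that Proposition \ref{lem:impor} follows from combining \eqref{revt}, Proposition \ref{lem:bHtilde}, and Proposition \ref{convunif}, and your sketch is exactly the spelled-out version of that combination. No comparison needed; the two are the same argument.
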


In addition, we shall need in this paper a precise lower bound on the probability that, under 
$\mathbf{P}_{\beta}$, the random walk $V$ makes only one excursion away from the origin,
conditionally on having a large prescribed area. To our knowledge, such an estimate is not available in the existing literature.
Recall the definition of $Y_n$ in \eqref{eq:Yn}.
\begin{proposition}[Unique excursion for large area]\label{prop:comppos}
For $[q_1,q_2]\subset (0,\infty)$, there exist  $C>0,\mu>0$ and  $n_0\in \N$ such that for all $q\in[q_1,q_2]$ and every $n\geq n_0$
\begin{equation}\label{propolb}
\mathbf{P}_\beta\bigl(V_i>0,0<i<n \mid Y_n=nq,\,V_n=0\bigr)\geq\tfrac{C}{n^\mu}.
\end{equation}
\end{proposition}
Although we can show that for the tilted law ${\bf P}_{n, {\bf h}_n^q}$  (thanks to the  positive, respectively negative drifts of the increments close to $0$, resp. close to $n$) there exists a $C(q_1,q_2)>0$ so that for $q\in [q_1,q_2]$ and $n$ large enough
$${\bf P}_{n, {\bf h}_n^q}  \bigl(V_i>0,0<i<n \mid V_n=0\bigr)>C(q_1,q_2),$$
and although we think that a similar result holds true for the  l.h.s. in \eqref{propolb}, 
we are unable to handle the conditioning by $Y_n=nq$ satisfactorily.

\section{The order of the phase transition}\label{sec:heur}
In Section \ref{pfTh3} below, we prove Theorem \ref{Thm3} that expresses the excess free energy 
as the solution of an equation involving the function $\hbeta$ introduced in Section \ref{sec:chbeta}. In Section \ref{pfTh4}, we first state Lemma \ref{lem:hbeta} which provides the 
behavior of $\hbeta(\tilde f(\beta))$ close to $\beta_c$ and then we combine  this Lemma with Theorem \ref{Thm3} to complete the proof of Theorem \ref{Thm2}.
Finally, in Section \ref{sec:asymhbeta} we give a proof of Lemma \ref{lem:hbeta}.

\subsection{Proof of Theorem \ref{Thm3} (Free energy equation)}\label{pfTh3}
By the representation formula~\eqref{eq:partfunc} and the definition of $\tilde{f}$, we have $\tilde{f}(\beta)=\lim_{L\to\infty}\frac{1}{L}\log\tilde{Z}_{L,\beta}$, where
\begin{equation}\label{wrZ}
{\textstyle \tilde{Z}_{L,\beta}:=\sum_{N=1}^{L}\left(\Gamma(\beta)\right)^N\mathbf{P}_{\beta}(\mathcal{V}_{N+1,L-N}).}
\end{equation}
As a consequence, the excess free energy satisfies $\tilde{f}(\beta)=-\log R$ where $R$ is the radius of convergence of the generating function $G(z)=\sum_{L=1}^\infty\tilde{Z}_{L,\beta}\;z^L$, that is
\begin{equation}\label{eq:fforsup}
{\textstyle \tilde{f}(\beta)=\sup\big\{\delta\geq0\colon\sum_{L=1}^\infty\tilde{Z}_{L,\beta}\;e^{-\delta L}=+\infty\big\},}
\end{equation}
if the set is non-empty and $\tilde{f}(\beta)=0$ otherwise. We recall \eqref{defcV} and we use \eqref{wrZ} to rewrite the sum in \eqref{eq:fforsup} as
\begin{align}\label{rwtZZ}
\nonumber\sum_{L=1}^\infty\tilde{Z}_{L,\beta}\;e^{-\delta L}&=\sum_{L=1}^\infty\sum_{N=1}^L\bigl(\Gamma(\beta)e^{-\delta}\bigr)^N\sum_{\substack{V_0=V_{N+1}=0\\ A_N=L-N}}\mathbf{P}_\beta(V)\, e^{-\delta(L-N)}\\
\nonumber&=\sum_{L=1}^\infty\sum_{N=1}^L\bigl(\Gamma(\beta)e^{-\delta}\bigr)^N\mathbf{E}_\beta\Bigl(e^{-\delta A_N}\mathbf{1}_{\{A_N=L-N,\,V_{N+1}=0\}}\Bigr)\\
&=\sum_{N=1}^\infty\bigl(\Gamma(\beta)e^{-\delta}\bigr)^N\mathbf{E}_\beta\Bigl(e^{-\delta A_N}\mathbf{1}_{\{V_{N+1}=0\}}\Bigr).
\end{align}
Since $A_N=A_{N+1}$ on the set $\{V_{N+1}=0\}$ and by using the definition of $\mathfrak{h}_{N,\beta}(\delta)$ in \eqref{eq:funch2}, the equality \eqref{rwtZZ} becomes
\begin{equation}
\sum_{L=1}^\infty\tilde{Z}_{L,\beta}\;e^{-\delta L}=\sum_{N=1}^\infty\exp{\Big(N\, \big[\log\Gamma(\beta)-\delta+\tfrac{N+1}{N}\,\mathfrak{h}_{N+1,\beta}(\delta)\big]\Big)},
\end{equation}
which  together with \eqref{eq:fforsup} gives
${\textstyle \tilde{f}(\beta)=\sup\big\{\delta\geq 0\colon\log\Gamma(\beta)-\delta+\hbeta(\delta)>0\big\}.}$
Since $\hbeta(\delta)\leq 0$, it follows that $\tilde{f}(\beta)=0$ if $\Gamma(\beta)\leq 1$. When $\Gamma(\beta)>1$, Lemma \ref{funch} gives that $\delta\mapsto -\delta+\hbeta(\delta)$ is continuous, decreasing, non-positive on $[0,\infty)$, equals $0$ at $\delta=0$ and tends to $-\infty$ when $\delta\to\infty$. Therefore, $\tilde{f}(\beta)>0$ and is the unique solution of the equation $\log\Gamma(\beta)-\delta+\hbeta(\delta)=0$. In addition, by recalling the definition of the collapsed phase \eqref{eq:colphase} and the extended phase \eqref{eq:extphase}, we can observe that
\begin{equation}
\mathcal{C}=\{\beta\colon\Gamma(\beta)\leq 1\}\quad\text{and}\quad\mathcal{E}=\{\beta\colon\Gamma(\beta)>1\}.
\end{equation}
We note that $\beta\mapsto\Gamma(\beta)$ is decreasing on $[0,\infty)$ (recall \eqref{lawP} and \eqref{sqq}) and therefore, the collapse transition occurs at $\beta_c$, the unique positive solution of the equation $\Gamma(\beta)=1$.

\subsection{Proof of Theorem \ref{Thm2} (Phase transition asymptotics)}\label{pfTh4}
We display here the proof of Theorem \ref{Thm2} subject to Lemma \ref{lem:hbeta} below,
that will be proven in Section \ref{sec:asymhbeta} afterward.


\begin{lemma}\label{lem:hbeta}
\begin{equation}
\lim_{\beta\to\beta_c}\,\frac{\mathfrak{h}_{\beta}\bigl(\tilde{f}(\beta)\bigr)}{\tilde{f}(\beta)^{2/3}}=-\varsigma_2.
\end{equation}
where we recall that $\varsigma_2$ was defined in \eqref{c1}.
\end{lemma}

Our aim is to study the asymptotic behavior of the equation in Theorem \ref{Thm3} near the critical point. We recall \eqref{sqq} and we perform a first order Taylor expansion of $\Gamma(\beta)$ near $\beta_c$ which gives 
$\log \Gamma(\beta_c-\gep)=\varsigma_1\epsilon (1+o(1))$ as $\epsilon\searrow 0$.
Next, we consider the function $\hbeta$ near $\beta_c$ and it follows from Lemma \ref{lem:hbeta} that when $\epsilon\searrow 0$
\begin{equation}\label{eq:delt}
h_{\beta_c-\epsilon}(\tilde{f}(\beta_c-\epsilon))= -\varsigma_2\tilde{f}(\beta_c-\epsilon)^{2/3} (1+o(1)).
\end{equation}
Therefore, by plugging \eqref{eq:delt} and the expansion of $\log \Gamma(\beta_c-\gep)$ in the equation in Theorem \ref{Thm3} that is verified by the excess free energy, we obtain that
\begin{equation}
\varsigma_1\epsilon (1+o(1))-\tilde{f}(\beta_c-\epsilon)-\varsigma_2\tilde{f}(\beta_c-\epsilon)^{2/3}(1+o(1))=0,
\end{equation} 
which allows to conclude that
\begin{equation}
\tilde{f}(\beta_c-\epsilon)\sim \Bigl(\frac{\varsigma_1}{\varsigma_2}\Bigr)^{3/2}\,\epsilon^{3/2}\quad\text{as}\ \ \epsilon\searrow 0,
\end{equation}
and the proof is complete.


\subsection{Asymptotics of \texorpdfstring{$\hbeta$}{h}}\label{sec:asymhbeta}

\subsubsection{Heuristics} 
Let us give the heuristic explanation of why $\hbeta(\delta)\sim -c\,\delta^{2/3}$ for some constant $c>0$. The main idea is to decompose the trajectory of the random walk $V$ into independent blocks of length $T\delta^{-2/3}$ for $T\in\mathbb{N}$ and $\delta$ small enough: we have approximately $N/(T\delta^{-2/3})$ such blocks. Hence, as $\delta\searrow 0$, we can estimate
\begin{equation}\label{eq:hhbeta}
\lim_{N\to\infty}\frac{1}{N}\log\mathbf{E}_\beta(e^{-\delta A_N})\sim\lim_{T\to\infty}\frac{\delta^{2/3}}{T}\log\mathbf{E}_\beta(e^{-\delta A_{T\delta^{-2/3}}}).
\end{equation}
It is well known that for such random walks (assume that $\mathbf{E}_\beta(U_1^2)=1$) (see \cite[p.~405]{RD05})
\begin{equation}
k^{-3/2}\sum_{i=1}^{Tk}|V_i|\xrightarrow{\mathcal{L}}\int_0^T|B(t)|dt\quad\text{as}\ k\to\infty,
\end{equation}
where $B$ is a standard Brownian motion. Now, let $k=\delta^{-2/3}$ and since $|e^{-\delta A_{T\delta^{-2/3}}}|\leq1$, we conclude that
\begin{equation}\label{eq:Donsker}
\mathbf{E}_\beta(e^{-\delta A_{T\delta^{-2/3}}})\to\mathbf{E}(e^{-\int_0^T|B(t)|dt})\quad\text{as}\ \delta\to 0.
\end{equation}
This convergence and \eqref{eq:hhbeta} would immediately imply $\hbeta(\delta)\sim -c\,\delta^{2/3}$ where $c$ can be estimated via the distribution of the \textit{Brownian area}, that is
\begin{equation}
c=-\lim_{T\to\infty}\frac{1}{T}\log\mathbf{E}(e^{-\int_0^T|B(t)|dt})>0.
\end{equation}

\begin{proof}[Proof of Lemma \ref{lem:hbeta}]
\quad

\subsubsection{Upper bound}\label{sec:upp}
Pick $T\in\mathbb{N}$, $\delta>0$ such that $\delta^{-2/3}\in\mathbb{N}$ and let $\Delta:=\delta^{-2/3}$. We take $N$ that satisfies $N/(T\Delta)\in\mathbb{N}$ and partition $\{1,\ldots,N\}$ into $k=N/(T\Delta)$ intervals of length $T\Delta$. By the Markov property of $V$, we decompose $\mathbf{E}_\beta\bigl(e^{-\delta A_N}\bigr)$ with respect to the position occupied by the random walk $V$ at times $T\Delta,2T\Delta,\ldots,(k-1)T\Delta$,
\begin{equation}\label{eq:uppbouEarea}
\mathbf{E}_\beta\bigl(e^{-\delta A_N}\bigr)=\sum_{\substack{x_0=0,x_i\in\mathbb{Z}\\i=1,\ldots,k}}\;\prod_{i=0}^{k-1}\mathbf{E}_{\beta,x_i}\Bigl(e^{-\delta A_{T\Delta}}\mathbf{1}_{\{V_{T\Delta}=x_{i+1}\}}\Bigr)\leq\Bigl[\;\sup_{x\in\mathbb{Z}}\mathbf{E}_{\beta,x}\bigl(e^{-\delta A_{T\Delta}}\bigr)\Bigr]^k.
\end{equation}
With the help of Lemma \ref{ineEarea} below, we can replace the supremum in the right hand side of  \eqref{eq:uppbouEarea} by the term indexed by $x=0$ only. The proof of Lemma  \ref{ineEarea} is postponed to Appendix \ref{appA}.
\begin{lemma}\label{ineEarea}
For all $\delta>0,n\in\mathbb{N}$ and $x,x'\in\mathbb{Z}$ such that $|x'|\geq|x|$, the following inequality holds true
\begin{equation}
\mathbf{E}_{\beta,x'}\bigl(e^{-\delta A_n}\bigr)\leq\mathbf{E}_{\beta,x}\bigl(e^{-\delta A_n}\bigr).
\end{equation}
\end{lemma}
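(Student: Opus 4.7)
The plan is first to reduce to the case $0 \leq x \leq x'$: under $\mathbf{P}_{\beta,-x}$ the process $V$ has the same law as $-V$ under $\mathbf{P}_{\beta,x}$ (the increments of $V$ being symmetric), so $|V_i|$ has the same distribution in both cases and hence $\mathbf{E}_{\beta,-x}(e^{-\delta A_n}) = \mathbf{E}_{\beta,x}(e^{-\delta A_n})$. It therefore suffices to prove the stronger statement that, for every $n\in\mathbb{N}$, the function $f_n(y) := \mathbf{E}_{\beta,y}(e^{-\delta A_n})$ is even in $y$ and non-increasing in $|y|$. I will proceed by induction on $n$; the base case $n=0$ is trivial since $f_0\equiv 1$.

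For the inductive step, the Markov property at time one yields
\begin{equation*}
f_{n+1}(y) \;=\; \mathbf{E}_{\beta,y}\bigl(e^{-\delta|V_1|} f_n(V_1)\bigr) \;=\; \sum_{k\in\mathbb{Z}} p_k\, g_n(y+k),
\end{equation*}
where $p_k := \mathbf{P}_\beta(v_1=k) = e^{-\beta|k|/2}/c_\beta$ and $g_n(z) := e^{-\delta|z|} f_n(z)$. By the inductive hypothesis and the symmetry of the geometric law, both $(g_n(z))_{z\in\mathbb{Z}}$ and $(p_k)_{k\in\mathbb{Z}}$ are even and non-increasing in absolute value. What remains, and where the main work lies, is to show that the discrete convolution $y \mapsto \sum_k p_k g_n(y+k)$ inherits these two properties.

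To close the inductive step I would use a layer-cake representation: writing $g_n(z) = \int_0^\infty \mathbf{1}_{\{g_n(z)>t\}}\,dt$ and $p_k = \int_0^\infty \mathbf{1}_{\{p_k>s\}}\,ds$, Fubini gives
\begin{equation*}
f_{n+1}(y) \;=\; \int_0^\infty\!\!\int_0^\infty |B_s \cap (A_t - y)|\, ds\, dt,
\end{equation*}
where $A_t := \{z\in\mathbb{Z} : g_n(z)>t\}$ and $B_s := \{k\in\mathbb{Z} : p_k>s\}$. Because $g_n$ and $(p_k)$ are even and monotone in absolute value, $A_t$ and $B_s$ are finite symmetric intervals of $\mathbb{Z}$ centered at $0$. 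A direct combinatorial check then shows that $y \mapsto |I \cap (J+y)|$ is even and non-increasing in $|y|$ whenever $I,J \subset \mathbb{Z}$ are symmetric intervals centered at $0$: the intersection of two such intervals is maximized when their centers coincide, and shrinks monotonically as one is shifted away. Integrating in $s$ and $t$ preserves both evenness and monotonicity, which closes the induction and yields the lemma. The main obstacle is precisely this rearrangement/convolution step; once it is established, the rest is bookkeeping via Markov and induction.
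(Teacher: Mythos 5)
Your proof is correct, and it takes a genuinely different route from the paper's. The paper reduces to consecutive nonnegative integers $x'=x+1$ and inducts: after one Markov step it performs an Abel (summation-by-parts) rewriting of the one-step sum in terms of the tail quantities $\bar{R}_x(y)=\mathbf{P}_{\beta,x}(|V_1|\geq y)$, then establishes the key monotonicity fact that $x\mapsto\bar{R}_x(y)$ is non-decreasing on $\mathbb{N}_0$ (a stochastic-domination statement about $|V_1|$ under $\mathbf{P}_{\beta,x}$), and combines this with the sign information supplied by the inductive hypothesis. You instead prove directly, by induction, the stronger-looking but equivalent statement that $f_n(y)=\mathbf{E}_{\beta,y}\bigl(e^{-\delta A_n}\bigr)$ is even and non-increasing in $|y|$; the inductive step reduces to the assertion that the discrete convolution $y\mapsto\sum_k p_k\,g_n(y+k)$ of two symmetric, unimodal, nonnegative sequences is again symmetric and unimodal, which you establish via the layer-cake decomposition and the elementary fact that $y\mapsto|I\cap(J+y)|$ is even and non-increasing in $|y|$ when $I,J\subset\mathbb{Z}$ are intervals centered at the origin. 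This is a discrete instance of the classical rearrangement fact (Wintner/Riesz) that the convolution of two symmetric unimodal densities is symmetric unimodal. Both routes are of comparable length; the paper's Abel-summation argument is more ad hoc but self-contained, while yours makes the rearrangement mechanism explicit and treats all $|y|\leq|y'|$ in one stroke rather than passing through consecutive integers. All the details you sketch check out: $g_n(z)=e^{-\delta|z|}f_n(z)$ inherits evenness and monotonicity in $|z|$ from $f_n$; the superlevel sets $A_t,B_s$ are finite symmetric intervals for $s,t>0$; Tonelli applies since everything is nonnegative; and the interval-overlap function is indeed even and non-increasing in the shift.
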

\noindent Therefore \eqref{eq:uppbouEarea} becomes
\begin{equation}\label{eq:del}
\mathbf{E}_\beta\bigl(e^{-\delta A_N}\bigr)\leq\Bigl[\mathbf{E}_{\beta}\bigl(e^{-\delta A_{T\Delta}}\bigr)\Bigr]^{N/(T\Delta)}.
\end{equation}
Recall that $\Delta:=\delta^{-2/3}$, apply $\frac{1}{N}\log$ to both sides of  \eqref{eq:del} and let $N\to \infty$ to obtain, for $\beta>0$ and $\delta>0$, that 
\begin{equation}\label{eq:hbetaupp}
\frac{\hbeta(\delta)}{\delta^{2/3}}\leq\frac{1}{T}\log\mathbf{E}_{\beta}\bigl(e^{-\delta A_{T\Delta}}\bigr).
\end{equation}
In what follows we need a uniform version (in $\beta$) of the convergence of $\mathbf{E}_{\beta}\bigl(e^{-\delta A_{T\Delta}}\bigr)$ towards $\mathbf{E}(e^{-\int_0^T|B(t)|dt})$ as $\delta\to 0$. For this reason, we introduce the strong approximation theorem (Sakhanenko \cite{Sa80}) to approximate the partial sums of independent random variables $U$ in the right hand side in \eqref{eq:hbetaupp} by independent normal random variables.

\begin{theorem}[Q. M. Shao \cite{Sh95}, Theorem B]\label{thm:shao}
Denote by $\sigma_\beta^2$ the variance of the random variable $U_1$ under $\mathbf{P}_\beta$. We can redefine $\{U_i,i\geq 1\}$ (denoted by $U^\beta$) on a richer probability space together with a sequence of independent standard normal random variables $\{X_i,i\geq 1\}$ such that for every $p>2$, $x>0$,
\begin{equation}
\mathbf{P}\biggl(\max_{i\leq n}\,\biggl|\sum_{j=1}^i U^\beta_j-\sigma_\beta\sum_{j=1}^i X_j\biggr|\geq x\biggr)\leq (Ap)^px^{-p}\sum_{i=1}^n\mathbf{E}|U^\beta_i|^p,
\end{equation}
where $A$ is an absolute positive constant.
\end{theorem}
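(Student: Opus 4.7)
The plan is to construct, for each $\beta$, a joint probability space carrying both $(v^\beta_j)_{j\geq 1}$ and an independent standard normal sequence $(y_j)_{j\geq 1}$ via a Sakhanenko-type quantile coupling, then use a dyadic maximal argument combined with a Rosenthal moment inequality of sharp order $(Cp)^p$ to obtain the claimed tail bound.

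First I would build the coupling inductively. Let $F^\beta$ denote the distribution function of $v_1^\beta$ and $\Phi$ the standard normal distribution function. At each step $j$, given the past generated by $v^\beta_1,\ldots,v^\beta_{j-1}$ and $y_1,\ldots,y_{j-1}$, I would draw $y_j$ as a standard normal independent of the past and set $v^\beta_j:=(F^\beta)^{-1}(\Phi(y_j))$. This is the classical monotone quantile coupling; it preserves the marginal law of $v^\beta_j$ and preserves independence across $j$. Sakhanenko's key lemma then yields a pointwise control of the form
\begin{equation}
|v^\beta_j-\sigma_\beta y_j|\leq C\,\min\bigl(|v^\beta_j|,\sigma_\beta|y_j|\bigr)\,\Psi_\beta\bigl(|v^\beta_j|\vee \sigma_\beta|y_j|\bigr),
\end{equation}
where $\Psi_\beta$ depends on the tails of $F^\beta$ and, crucially, the constant $C$ is absolute. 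Taking $p$-th moments, one obtains a bound of the form $\mathbf{E}|v^\beta_j-\sigma_\beta y_j|^p\leq C^p\,\mathbf{E}|v^\beta_j|^p$.

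Next, to control the maximal deviation $M_n:=\max_{i\leq n}\bigl|\sum_{j\leq i}(v^\beta_j-\sigma_\beta y_j)\bigr|$, I would use a dyadic decomposition: partition $\{1,\ldots,n\}$ into nested blocks of lengths $2^k$ for $k=0,\ldots,\lceil\log_2 n\rceil$ and express $M_n$ as a telescoping maximum over these dyadic scales. A L\'evy--Doob-type maximal inequality applied to the martingale of coupled partial sums, combined with Rosenthal's inequality at each scale applied to the centered independent differences $v^\beta_j-\sigma_\beta y_j$, gives
\begin{equation}
\mathbf{E}\bigl[M_n^p\bigr]\leq (Ap)^p\sum_{j=1}^n \mathbf{E}|v^\beta_j-\sigma_\beta y_j|^p \leq (A'p)^p\sum_{j=1}^n \mathbf{E}|v^\beta_j|^p,
\end{equation}
and Markov's inequality applied to $M_n^p$ then yields the stated tail bound.

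The main obstacle, in my view, is the Sakhanenko pointwise coupling estimate itself. Producing the $(Ap)^p$ dependence with an \emph{absolute} constant $A$ requires a considerably finer analysis than the naive quantile coupling suggests: near the tails of $F^\beta$ the inverse distribution function can be wildly non-smooth, and one must exploit centering and independence through a careful Taylor-type expansion of $(F^\beta)^{-1}\circ\Phi$ around its linearization $x\mapsto\sigma_\beta x$. Combining this with the sharp Rosenthal constant of order $p^p$ (itself a delicate combinatorial fact due to Nagaev and Pinelis) is the technical heart of Shao's Theorem~B; the dyadic maximal step is comparatively routine once these two ingredients are in place.
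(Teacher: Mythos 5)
This statement is not proved in the paper at all: it is quoted verbatim as Theorem~B of Shao \cite{Sh95} (itself a refinement of Sakhanenko's strong approximation) and used as a black box. So the only question is whether your sketch would actually prove it, and it would not: there is a genuine gap at the step where you pass from the increment coupling to the maximal bound. Your construction couples the increments one at a time, setting $v^\beta_j=(F^\beta)^{-1}(\Phi(y_j))$, so the differences $d_j:=v^\beta_j-\sigma_\beta y_j$ are independent, centered, and have \emph{strictly positive} variance (they cannot vanish, since $v^\beta_j$ is integer-valued and $y_j$ is Gaussian). Rosenthal's inequality for such sums necessarily carries the quadratic-variation term:
\begin{equation}
\mathbf{E}\Bigl|\sum_{j\leq n}d_j\Bigr|^p\;\leq\;(Cp)^p\Bigl(\sum_{j\leq n}\mathbf{E}|d_j|^p+\Bigl(\sum_{j\leq n}\mathbf{E}d_j^2\Bigr)^{p/2}\Bigr),
\end{equation}
and the second term is of order $n^{p/2}$, not $n$. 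Indeed, by the central limit theorem $\mathbf{E}|\sum_{j\leq n}d_j|^p\asymp n^{p/2}$ for any nondegenerate i.i.d.\ centered $d_j$, so the inequality $\mathbf{E}[M_n^p]\leq(Ap)^p\sum_j\mathbf{E}|d_j|^p$ that you assert is simply false for large $n$ and $p>2$. The whole point of Shao's bound is that the right-hand side grows only linearly in $n$, which no term-by-term coupling can achieve.

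The missing idea is that the Sakhanenko--KMT construction does not couple increments; it couples \emph{partial sums over dyadic blocks} by a conditional quantile transform (first match $S_n$ to a normal, then conditionally match $S_{\lfloor n/2\rfloor}$ given $S_n$, and so on down the dyadic tree). The coupling errors at the different scales then telescope and add up to a quantity controlled by $\sum_i\mathbf{E}|v_i|^p$ with no $n^{p/2}$ contribution; your dyadic decomposition enters only in the maximal-inequality step, which is too late. If your goal is merely to use the theorem, as the paper does, the correct move is to cite \cite{Sh95}; if you want to reprove it, the construction of the joint law itself has to be dyadic and conditional.
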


We  let also, for $n\in \N$,  $Y_n=\sum_{i=1}^n X_i$, $A_n(Y)=\sum_{i=1}^n|Y_i|$ and redefine $V_n^\beta=\sum_{i=1}^n U_i^\beta$, $A_n(V^\beta)=\sum_{i=1}^n|V_i^\beta|$. We pick $T>0$, $p>2$, $\theta>0$ and $K$ a compact subset of $(0,\infty)$.  We use Theorem \ref{thm:shao} and the fact that (recall \eqref{lawP}) $\mathbf{E}\bigl[|U_1^\beta|^p\bigr]$ is bounded from above uniformly in $\beta\in K$,  to assert  that there exists a constant $c_{p,K}>0$ such that for all $\Delta>0$ and $\beta\in K$
\begin{equation}
\mathbf{P}\Bigl(\,\max_{i\leq T\Delta}\,\bigl|V_i^\beta-\sigma_\beta Y_i\bigr|\geq \Delta^\theta\Bigr)\leq c_{p,K}\, T \,\Delta^{1-\theta p}.
\end{equation}
Note that on the event $\{\max_{i\leq T\Delta}\,\bigl|V_i^\beta-\sigma_\beta Y_i\bigr|< \Delta^\theta\}$, we obviously have $|A_{T\Delta}(V^\beta)-\sigma_\beta A_{T\Delta}(Y)|\leq T\Delta^{\theta+1}$. Therefore, since $x\mapsto \exp(-x)$ is 1-Lipschitz on $[0,\infty)$ and since  $\Delta=\delta^{-2/3}$, we can write that for $\beta\in K$ and $\delta>0$
\begin{align}\label{eq:approxvy}
\nonumber\bigl|\mathbf{E}\bigl(e^{-\delta A_{T\Delta}(V^\beta)}-e^{-\delta\sigma_\beta A_{T\Delta}(Y)}\bigr)\bigr|&\leq \mathbf{P}\Bigl(\,\max_{i\leq T\Delta}\,\bigl|V_i^\beta-\sigma_\beta Y_i\bigr|\geq \Delta^\theta\Bigr)+\delta T\Delta^{\theta+1}\\
&\leq c_{p,K} T \delta^{\frac23(\theta p-1)}+T\delta^{\frac13(1-2\theta)}.
\end{align}
We chose $p=3$ and $\theta\in(1/3,1/2)$ and plug it in the right hand side of \eqref{eq:hbetaupp} to obtain that for $\beta\in K$ and $\delta>0$,
\begin{equation}\label{eq:hbetaupp2}
\frac{\hbeta(\delta)}{\delta^{2/3}}\leq\frac{1}{T}\log\Bigl[\mathbf{E}\bigl(e^{-\delta\sigma_\beta A_{T\Delta}(Y)}\bigr)+c_{3,K} T\delta^{\frac{2(3\theta-1)}{3}}+T\delta^{\frac{1-2\theta}{3}}\Bigr].
\end{equation}
\begin{lemma}\label{lem:BrowN}
Let K be a compact subset of $(0,+\infty)$. For $T>0$ and  $\epsilon>0$ there exists a $\delta_0>0$ such that for  $\delta\leq\delta_0$ (with $\Delta=\delta^{-2/3}$),
\begin{equation}\label{tbbpr}
\sup_{\beta\in K}\,\bigl|\mathbf{E}\bigl(e^{-\delta\sigma_\beta A_{T\Delta}(Y)}\bigr)-\mathbf{E}\bigl(e^{-\sigma_\beta\int_0^T|B(t)|dt}\bigr)\bigr|<\epsilon,
\end{equation}
where $B$ is a standard Brownian motion.
\end{lemma}

\begin{proof}[Proof of Lemma \ref{lem:BrowN}]
We can consider $\{B(t),t\geq0\}$ and $\{y_i,i\geq1\}$ on the same probability space by letting $y_i=B(i)-B(i-1)$ and thus $Y_i:=y_1+\dots+y_i=B(i)$ for $i\in\mathbb{N}$. We recall that $A_{T\Delta}(Y)=\sum_{i=1}^{T\Delta}|B(i)|$ and therefore, by Brownian scaling we note that  
$$\Delta^{-3/2}A_{T\Delta}(Y)\overset{d}{=}\Delta^{-1}\sum_{i=1}^{T\Delta}|B(i/\Delta)|.$$
Consequently, by recalling that $\delta=\Delta^{-3/2}$ we can replace 
$\mathbf{E}\bigl(e^{-\delta\sigma_\beta A_{T\Delta}(Y)}\bigr)$ in the left hand side of  \eqref{tbbpr} 
by $\mathbf{E}\bigl(e^{-\sigma_\beta \Delta^{-1}\sum_{i=1}^{T\Delta}|B(i/\Delta)|}\bigr)$.  Since the exponential function is 1-Lipschitz on $(-\infty,0]$, we have
\begin{align}
\sup_{\beta\in K}\,\bigl|\mathbf{E}\bigl(e^{-\sigma_\beta \Delta^{-1}\sum_{i=1}^{T\Delta}|B(i/\Delta)|}\bigr)&-\mathbf{E}\bigl(e^{-\sigma_\beta\int_0^T|B(t)|dt}\bigr)\bigr|\\
\nonumber &{\textstyle\leq \max\{\sigma_\beta, \beta\in K\}\ \mathbf{E}\Bigl[\bigl|\Delta^{-1}\sum_{i=1}^{T\Delta}|B(i/\Delta)|-\textstyle\int_0^T|B(t)|dt\bigr|\Bigr]}.
\end{align}
Since $\max\{\sigma_\beta, \beta\in K\}<\infty$, since by Riemann sum approximation we know that
\begin{equation}
\Delta^{-1}\sum_{i=1}^{T\Delta}|B(i/\Delta)|\xrightarrow[\Delta\to\infty]{a.s.}\int_0^T|B(t)|dt,
\end{equation}
and since we have uniform integrability (because   
$\sup_{\Delta>0}\mathbf{E}(|\Delta^{-1}\sum_{i=1}^{T\Delta}|B(i/\Delta)|^2)<\infty$) we can conclude that
\begin{equation}\label{eq:EDonsker}
\lim_{\Delta\to\infty}   \mathbf{E}\Bigl[\bigl|\Delta^{-1}\sum_{i=1}^{T\Delta}|B(i/\Delta)|-\textstyle\int_0^T|B(t)|dt\bigr|\Bigr]=0.
\end{equation}
This completes the proof.
\end{proof}

We resume the proof of the upper bound. Since  $\theta\in(1/3,1/2)$, the right hand side of \eqref{eq:approxvy} vanishes as $\delta\to 0$ uniformly in $\beta\in K$. Thus, we can  replace $\delta$ by $\tilde{f}(\beta_c)$ in \eqref{eq:hbetaupp2} and use Lemma \ref{lem:BrowN} and the fact that $\lim_{\gep\to 0^+}\tilde{f}(\beta_c-\gep)=0$ to conclude that, for all $T>0$,
\begin{equation}
\limsup_{\gep\to 0^+}\frac{\hbeta(\tilde{f}(\beta_c-\gep))}{\tilde{f}(\beta_c-\gep)^{2/3}}\leq\frac{1}{T}\log\mathbf{E}\bigl(e^{-\sigma_{\beta_c}\int_0^T|B(t)|dt}\bigr).
\end{equation}
It remains to let $T$ tend to infinity and to recall \eqref{c1} to obtain
\begin{equation}
\limsup_{\gep\to 0^+}\frac{\hbeta(\tilde{f}(\beta_c-\gep))}{\tilde{f}(\beta_c-\gep)^{2/3}}\leq-\varsigma_2.
\end{equation}

\subsubsection{Lower bound}\label{sec:low}
Recall that $T\in\mathbb{N},\delta>0$ and $\Delta=\delta^{-2/3}\in\mathbb{N}$. We also take $N\in\mathbb{N}$ such that $N/(T\Delta)\in\mathbb{N}$. Pick $\eta>0$ and use the decomposition in \eqref{eq:uppbouEarea} to obtain 
\begin{align}\label{eq:lowbouEarea}
\mathbf{E}_\beta\bigl(e^{-\delta A_N}\bigr)&\geq\sum_{\substack{x_0=0,x_i\in[-\eta\sqrt{\Delta},\eta\sqrt{\Delta}]\\i=1,\ldots,k}}\;\prod_{i=0}^{k-1}\mathbf{E}_{\beta,x_i}\Bigl(e^{-\delta A_{T\Delta}}\mathbf{1}_{\{V_{T\Delta}=x_{i+1}\}}\Bigr)\\
&\geq\Bigl[\;\inf_{x\in[-\eta\sqrt{\Delta},\eta\sqrt{\Delta}]}\mathbf{E}_{\beta,x}\Bigl(e^{-\delta A_{T\Delta}}\mathbf{1}_{\{V_{T\Delta}\in[-\eta\sqrt{\Delta},\eta\sqrt{\Delta}]\}}\Bigr)\Bigr]^{N/(T\Delta)}.
\end{align}
For any integer $x\in[-\eta\sqrt{\Delta},\eta\sqrt{\Delta}]$, we consider the two sets of paths
\begin{equation}
\Pi_1^x=\bigl\{(V_i)_{i=0}^{T\Delta}\colon V_0=x,\,V_{T\Delta}\in[-\eta\sqrt{\Delta},\eta\sqrt{\Delta}]\bigr\},
\end{equation}
and
\begin{equation}
\Pi_2=\bigl\{(V_i)_{i=0}^{T\Delta}\colon V_0=0,\,V_{T\Delta}\in[-\eta\sqrt{\Delta},0]\bigr\}.
\end{equation}
Clearly, if $V=(V_i)_{i=0}^{T\Delta}\in\Pi_2$, then the trajectory $V+x$ starts at $x\in[0,\eta\sqrt{\Delta}]$ and is an element of $\Pi_1^x$. Similarly, for $x\in[-\eta\sqrt{\Delta},0]$, $\Pi'_2+x\subseteq\Pi_1^x$ where
\begin{equation}
\Pi'_2=\bigl\{(V_i)_{i=0}^{T\Delta}\colon V_0=0,\,V_{T\Delta}\in[0,\eta\sqrt{\Delta}]\bigr\}.
\end{equation}
Since $\mathbf{P}_\beta(V\in\Pi_2)=\mathbf{P}_\beta(V\in\Pi'_2)$, we conclude that
\begin{equation}\label{eq:1stlowb}
\mathbf{P}_{\beta,x}(V\in\Pi_1^x)\geq\mathbf{P}_\beta(V\in\Pi'_2)\quad\text{for all}\;x\in[-\eta\sqrt{\Delta},\eta\sqrt{\Delta}].
\end{equation}
Moreover, for any $V^\star\in\Pi_1^x$,
\begin{equation}\label{eq:2ndlowb}
\delta\sum_{i=1}^{T\Delta}|V^\star_i|=\delta\sum_{i=1}^{T\Delta}|x+V_i|\leq\delta\sum_{i=1}^{T\Delta}|V_i|+\delta T\Delta|x|\leq\delta\sum_{i=1}^{T\Delta}|V_i|+\eta T,
\end{equation}
where the trajectory $V$ satisfies $V_0=0$. Combining \eqref{eq:1stlowb} and \eqref{eq:2ndlowb}, we then have, for $x\in[-\eta\sqrt{\Delta},\eta\sqrt{\Delta}]$,
\begin{equation}
\mathbf{E}_{\beta,x}\Bigl(e^{-\delta A_{T\Delta}}\mathbf{1}_{\{V_{T\Delta}\in[-\eta\sqrt{\Delta},\eta\sqrt{\Delta}]}\Bigr)\geq e^{-\eta T}\mathbf{E}_{\beta}\Bigl(e^{-\delta A_{T\Delta}}\mathbf{1}_{\{V_{T\Delta}\in[0,\eta\sqrt{\Delta}]}\Bigr).
\end{equation}
By plugging the lower bound above into \eqref{eq:lowbouEarea} and by using the symmetry of $V$ we immediately get
\begin{equation}\label{eq:bpt}
\mathbf{E}_\beta\bigl(e^{-\delta A_N}\bigr)\geq\Bigl[e^{-\eta T}\mathbf{E}_{\beta}\Bigl(e^{-\delta A_{T\Delta}}\mathbf{1}_{\{V_{T\Delta}\in\,  [0,\eta \sqrt{\Delta}]\}}\Bigr)\Bigr]^{N/T\Delta},
\end{equation}
which, by applying $\frac{1}{N} \log$ to both sides in \eqref{eq:bpt} and by letting $N\to\infty$, gives, for all $\beta>0$,
\begin{equation}
\frac{\hbeta(\delta)}{\delta^{2/3}}\geq\frac{1}{T}\log\mathbf{E}_{\beta}\Bigl(e^{-\delta A_{T\Delta}}\mathbf{1}_{\{V_{T\Delta}\in [0,\eta \sqrt{\Delta}]\}}\Bigr)-\eta,\quad \delta,\eta>0.
\end{equation}
At this stage, we proceed as in the upper bound (from \eqref{eq:hbetaupp}) to obtain, for all $T\in\mathbb{N},\eta>0$,
\begin{equation}
\liminf_{\beta\to\beta_c}\frac{\hbeta(\tilde{f}(\beta))}{\tilde{f}(\beta)^{2/3}}\geq\frac{1}{T}\log\mathbf{E}\bigl(e^{-\sigma_{\beta_c}\int_0^T|B(t)|dt}\mathbf{1}_{\{B(T)\in[0,\eta]\}}\bigr)-\eta.
\end{equation}
It remains to show that  for all $\eta>0$ we have
\begin{equation}\label{eq:etaBrow}
\lim_{T\to\infty}\frac{1}{T}\log\mathbf{E}\bigl(e^{-\sigma_{\beta_c}\int_0^T|B(t)|dt}\mathbf{1}_{\{B(T)\in[0,\eta]\}}\bigr)=\lim_{T\to\infty}\frac{1}{T}\log\mathbf{E}\bigl(e^{-\sigma_{\beta_c}\int_0^T|B(t)|dt}\bigr),
\end{equation}
but the latter convergence can be obtained by adapting the proof of \eqref{eq:ppr} to the continuous setting and for conciseness we will not give the details of the proof here. 
Then, by recalling \eqref{c1}, we achieve the bound
\begin{equation}
\liminf_{\beta\to\beta_c}\frac{\hbeta(\tilde{f}(\beta))}{\tilde{f}(\beta)^{2/3}}\geq-\varsigma_2-\eta,
\end{equation}
for all $\eta>0$. It remains to let $\eta\to 0$ to complete the proof.

\end{proof}

\section{Geometry of the collapsed phase}\label{geo}

In Section \ref{Th44} below, a proof of Theorem \ref{Thm4} is displayed subject to Lemma \ref{lem2}, which ensures that the horizontal extension of the polymer inside the collapsed phase is of order $\sqrt{L}$, and to Proposition \ref{prop1}, which provides a sharp estimate of the partition function restricted to those trajectories making only one bead. Proposition  \ref{prop1} is proven in Section \ref{pr:prop1} subject  to Lemma \ref{lem1}, which is the counterpart of Lemma \ref{lem2} for the one bead trajectory and to Proposition \ref{prop:comppos}, which gives a lower bound on the probability that the random walk $V$ makes an $n$-step excursion away from the origin  conditioned on the large deviation event $\{Y_n=q n, V_n=0\}$. Lemmas \ref{lem2} and \ref{lem1} are proven in Section \ref{pr:lem2} 
whereas the proof of Proposition \ref{prop:comppos} is postponed to  Section \ref{sec:wulff} because it requires more preparation. Section \ref{pp5} is dedicated to the proof of Theorem \ref{Prop5} and Section \ref{pr:Thm7} 
to the proof of Theorem  \ref{Convenv}.

\subsection{Proof of Theorem \ref{Thm4} (One bead Theorem)}\label{Th44}

The proof of Theorem \ref{Thm4} will be displayed subject to Lemma \ref{lem2} and Proposition \ref{prop1} that are stated below.

\begin{lemma}\label{lem2}
For  $\beta>\beta_c$, there exist $a,a_1,a_2>0$ such that
\begin{equation}\label{rst}
P_{L,\beta}(N_L(l)\geq a_1\sqrt{L})\leq a_2\,e^{-a \sqrt L},\quad L\in \N.
\end{equation}
\end{lemma}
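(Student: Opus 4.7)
Since $\beta \mapsto \Gamma^\mathsf{m}(\beta)$ is strictly decreasing and equals $1$ at $\beta_c^\mathsf{m}$ (see the discussion following Theorem~\ref{Thm3}), the assumption $\beta > \beta_c^\mathsf{m}$ gives $\gamma := \Gamma^\mathsf{m}(\beta) \in (0,1)$. The representation~\eqref{eq:partfunc} of the partition function rewrites the probability as a ratio
\[
P^\mathsf{m}_{L,\beta}\bigl(N_L(l) \geq a_1 \sqrt L\bigr) \;=\; \frac{\sum_{N \geq a_1 \sqrt L} \gamma^N\, \mathbf{P}_\beta(\mathcal{V}_{N+1, L-N})}{\tilde Z^\mathsf{m}_{L,\beta}},
\]
so the whole strategy is to produce a sharp lower bound on the denominator and a matching upper bound on the restricted sum in the numerator.

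For the denominator, the plan is to retain a single term $N^\star = \lfloor a^\star \sqrt L\rfloor$, with $a^\star$ chosen close to the typical horizontal extension $a_\mathsf{m}(\beta)$ from Theorem~\ref{Prop5}. Combining Proposition~\ref{prop:comppos} (which allows one to restrict the walk to a single positive excursion, so that the arithmetic area $A_{N^\star+1}$ coincides with $(N^\star+1)Y_{N^\star+1}$) with the sharp local large deviation estimate of Proposition~\ref{lem:impor} applied at $q = (L-N^\star)/(N^\star+1)^2$ (which, up to an affine change of scale, lies in the appropriate compact range when $a^\star$ is fixed), one obtains a lower bound of the form $\mathrm{poly}(L)^{-1} \exp(-\Lambda^\star \sqrt L)$, where $\Lambda^\star$ is precisely the minimum of $-\tilde G_\mathsf{m}$ from~\eqref{defg} attained at $a_\mathsf{m}(\beta)$.

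For the numerator, the uniform bound $\gamma^N \leq \gamma^{a_1\sqrt L}$ already supplies the factor $\exp(-a_1|\log\gamma|\sqrt L)$ throughout the range of summation. The remaining probability $\mathbf{P}_\beta(\mathcal{V}_{N+1,L-N})$ is controlled by an upper large deviation estimate on the arithmetic area $A_{N+1}$: the pointwise inequality $A_{N+1} \leq \sum_{i=1}^{N+1}(N+2-i)|v_i|$ combined with the exponential Chebyshev bound and the finite exponential moments of the geometric law $\mathbf{P}_\beta$ (choosing the tilt $\lambda$ of order $1/N$ so as to stay inside the domain of the moment generating function) produces a Cramer-type upper bound that one may further sharpen on sub-ranges of $N$ (namely $a_1\sqrt L \leq N \leq L^{2/3}$, where the deviation is upward, versus $N \geq L^{2/3}$, where the factor $\gamma^N$ already dominates). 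Summation in $N$ costs only a polynomial prefactor, and for $a_1$ chosen large enough the resulting rate strictly exceeds $\Lambda^\star$, yielding the claimed exponential decay~\eqref{rst}.

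The main obstacle is the matching of the two exponential rates. The lower bound on the denominator must capture exactly the variational minimum $\Lambda^\star$ of~\eqref{defg}, and it is here that the full strength of Section~\ref{sec:lcltp} is required: Proposition~\ref{lem:impor} provides the sharp prefactor and exponential rate, while Proposition~\ref{prop:comppos} ensures that the path realising this rate can be taken to be a single excursion, so that its arithmetic area matches the one imposed by the event $\mathcal{V}_{N^\star+1, L-N^\star}$. Without these two sharp inputs, one could not tune $a_1$ so as to produce a positive gap between the numerator and the denominator rates.
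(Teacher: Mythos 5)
Your strategy (lower bound the denominator, upper bound the numerator, choose $a_1$ large enough to create a gap) is the right one, but it is a much heavier route than the paper's. The paper (which only remarks that Lemma~\ref{lem2} is ``obtained in a very similar manner'' to Lemma~\ref{lem1}) uses the crude lower bound $\tilde Z^{\mathsf{m}}_{L,\beta}\geq\tfrac12\tilde Z^{\mathsf{m,o}}_{L,\beta}\geq c\,e^{-\gamma_0\sqrt L}$ established in \eqref{rtf} by retaining a single explicit ``plateau'' trajectory $V^*\in\cV^+_{r+1,L-r}$, and the trivial bound $\mathbf{P}_\beta(\cV_{N+1,L-N})\le 1$ on the numerator, which together with $\sum_{N\ge a_1\sqrt L}(\Gamma^\mathsf{m}(\beta))^N\le C(\Gamma^\mathsf{m}(\beta))^{a_1\sqrt L}$ already gives the result once $a_1|\log\Gamma^\mathsf{m}(\beta)|>\gamma_0$. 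You instead want to reproduce the sharp rate $\Lambda^\star=-\tilde G_\mathsf{m}(a_\mathsf{m}(\beta))$ for the denominator via Propositions~\ref{lem:impor} and~\ref{prop:comppos} — that works, but it is essentially a re-derivation of the lower half of Proposition~\ref{prop1}, and it imports the entire analytic apparatus of Sections~\ref{sec:mgf}--\ref{sec:lclt} into the proof of a lemma that the paper handles in five lines. Likewise, the Cramer-type sharpening of $\mathbf{P}_\beta(\cV_{N+1,L-N})$ on sub-ranges of $N$ is unnecessary once the denominator bound is in place; the crude $\le 1$ suffices because $a_1$ is a free parameter.

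There is one genuine mismatch you should flag rather than silently claim to ``yield \eqref{rst}.'' Your argument (and the paper's) produces a bound of the form $a_2\,e^{-a\sqrt L}$, not $a_2\,e^{-aL}$: the numerator is bounded by $C\,(\Gamma^\mathsf{m}(\beta))^{a_1\sqrt L}=Ce^{-a_1|\log\Gamma^\mathsf{m}(\beta)|\sqrt L}$ while the denominator is $\ge c\,e^{-\Lambda^\star\sqrt L}$, so the ratio decays at rate $(a_1|\log\Gamma^\mathsf{m}(\beta)|-\Lambda^\star)\sqrt L$. In fact $e^{-aL}$ is \emph{not} achievable: keeping only the terms $N\in[a_1\sqrt L,2a_1\sqrt L]$ in the numerator and applying your own sharp lower bound there shows the left-hand side of \eqref{rst} is itself bounded below by a constant times $e^{-c\sqrt L}$. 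The $e^{-aL}$ in the statement is evidently a misprint for $e^{-a\sqrt L}$; this is consistent with the only place Lemma~\ref{lem2} is invoked, namely display \eqref{ths} in the proof of Theorem~\ref{Thm4}, which uses the bound in the form $e^{-a\sqrt t}$.
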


Recall (\ref{eq:dist1}--\ref{eq:dist11})


\begin{proposition}\label{prop1}
For  $\beta>\beta_c$, there exist $c,c_1,c_2>0$ and $\kappa>1/2$ such that 
\begin{equation}\label{state}
\frac{c_1}{L^\kappa}\,e^{\beta L-c\sqrt{L}}\leq Z_{L,\beta}^{\mathsf{o}}\leq\frac{c_2}{\sqrt{L}}\,e^{\beta L-c\sqrt{L}},\quad L\in \N.
\end{equation}
\end{proposition}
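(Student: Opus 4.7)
I would start from the probabilistic representation \eqref{eq:dist11}, which reduces the task to sharp two-sided bounds on
\[
S_L := \sum_{N=1}^L (\Gamma^\mathsf{m}(\beta))^N\, \mathbf{P}_\beta(\mathcal{V}^+_{N+1, L-N}).
\]
The positivity condition defining $\mathcal{V}^+_{n,k}$, combined with $V_0 = V_n = 0$, forces $A_n = \sum_{i=1}^{n-1} V_i = nY_n$, so the event $\mathcal{V}^+_{N+1,L-N}$ coincides with $\{V_{N+1}=0,\, Y_{N+1} = (L-N)/(N+1)\}$ intersected with the strict positivity of $V_1,\dots,V_N$. Parametrizing this as $\{Y_{N+1} = (N+1)q_L(N)\}$ with $q_L(N) := (L-N)/(N+1)^2$ puts the marginal probability exactly into the form handled by Proposition \ref{lem:impor}. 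The key observation is that if $N = a\sqrt{L}$ then $q_L(N) \to 1/a^2$ as $L\to\infty$, which is precisely the value at which $\tilde h_0$ and $\tilde H$ are evaluated in the definition \eqref{defg} of $\tilde G_\mathsf{m}$.

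\textbf{Reduction to a Laplace sum.} Combining Proposition \ref{lem:impor} with Proposition \ref{prop:comppos} yields, uniformly on the range $N \in [a_1\sqrt{L}, a_2\sqrt{L}]$ (with $a_1,a_2>0$ chosen so that $1/a^2 \in [q_1,q_2]$),
\[
\frac{C'}{(N+1)^{2+\mu}}\, e^{-(N+1)\,\mathsf{J}(q_L(N))} \;\leq\; \mathbf{P}_\beta(\mathcal{V}^+_{N+1,L-N}) \;\leq\; \frac{C}{(N+1)^2}\, e^{-(N+1)\,\mathsf{J}(q_L(N))},
\]
where I set $\mathsf{J}(q) := \tilde h_0(q,0)\, q - L_\Lambda(\tilde H(q,0))$. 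Writing $N = a\sqrt{L}$ and controlling the replacement of $q_L(N)$ by $1/a^2$ via the continuity of $\tilde h_0$ and $L_\Lambda$, the exponent of the summand equals, to leading order, $\sqrt{L}\, \tilde G_\mathsf{m}(a)$. Since $\log \Gamma^\mathsf{m}(\beta) < 0$ in the collapsed phase and $\mathsf{J}\geq 0$, one checks that $\tilde G_\mathsf{m}(a) \to -\infty$ both as $a\to\infty$ (linear term) and as $a\to 0$ ($\mathsf{J}(1/a^2)$ blows up); combined with the strict concavity recalled after Theorem \ref{Prop5}, $\tilde G_\mathsf{m}$ attains its (strictly negative) maximum at a unique interior point $a^*$, and $c := -\tilde G_\mathsf{m}(a^*)>0$ is exactly the announced rate. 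The lower bound then follows from keeping the single term $N = \lfloor a^*\sqrt{L}\rfloor$, which contributes of order $L^{-(1+\mu/2)}\, e^{-c\sqrt{L}}$; the upper bound follows from summing the pointwise bound over all $N$ in $[a_1\sqrt{L},a_2\sqrt{L}]$, producing an extra Gaussian-width factor of at most $L^{1/4}$, hence at most $c_2\, L^{-3/4}\, e^{-c\sqrt{L}}$, which is stronger than the claimed $c_2\, L^{-1/2}\, e^{-c\sqrt{L}}$.

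\textbf{Boundary regimes and main obstacle.} The application of Proposition \ref{lem:impor} is legitimate only when $q_L(N)$ lies in a compact subset of $(0,\infty)$, so the contributions of $N > a_2\sqrt{L}$ and $N < a_1\sqrt{L}$ must be controlled separately. The large-$N$ regime is precisely what the one-bead analog Lemma \ref{lem1} of Lemma \ref{lem2} is designed to dismiss, while the small-$N$ regime can be handled by a crude deviation estimate forcing typical vertical stretches of height of order $(L-N)/N \gg \sqrt{L}$, which costs at least $e^{-c' L/N}$ and is therefore negligible against $e^{-c\sqrt{L}}$. The principal technical difficulty I anticipate lies in quantifying the error made when replacing $q_L(N)$ by $1/a^2$ inside the exponent: this exponent is multiplied by $N+1 \sim \sqrt{L}$, so one cannot afford an error larger than $1/\sqrt{L}$ per unit in $q$, and the quantitative regularity of $\tilde h_0$ and $L_\Lambda$ supplied by Proposition \ref{convunif}, together with a careful first-order Taylor expansion of the rate $\mathsf{J}$ around $1/a^2$, is what will make the argument go through.
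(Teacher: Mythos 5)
Your proposal is correct and follows essentially the same route as the paper: reduce via \eqref{eq:dist11} to a sum over $N$, use Lemma~\ref{lem1} to restrict to $N\in\sqrt{L}\,[a_1,a_2]$, observe $A_{N+1}=(N+1)Y_{N+1}$ on the positive excursion event, sandwich $\mathbf{P}_\beta(\mathcal{V}^+_{N+1,L-N})$ between $\frac{C}{N^\mu}\mathbf{P}_\beta(Y=\cdot,V=0)$ and $\mathbf{P}_\beta(Y=\cdot,V=0)$ via Proposition~\ref{prop:comppos}, apply Proposition~\ref{lem:impor}, and identify the rate with $\tilde G_\mathsf{m}(a_\mathsf{m}(\beta))$. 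Your remark that a genuine Laplace analysis of the strictly concave $\tilde G_\mathsf{m}$ yields the sharper $L^{-3/4}$ prefactor in the upper bound is true but unnecessary for the stated result, and the paper instead settles for the crude count of $O(\sqrt{L})$ summands (giving $L^{-1/2}$) and avoids your anticipated ``replacement of $q_L(N)$ by $1/a^2$'' issue by keeping the exponent exact in $q_{N,L}$ (the quantity $G_{L,N}$ of \eqref{eqqc2}) and bounding only its supremum via Lipschitz continuity of $\tilde G_\mathsf{m}$.
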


\subsubsection{Proof of Theorem \ref{Thm4}} We will first show that, for $\beta>\beta_c$ and under the polymer measure, the probability that there is exactly one macroscopic bead in the polymer tends to $1$ as $L\to\infty$. Then, we will show that, with a probability converging to $1$ as $L\to \infty$, the first step and the last step of this macroscopic bead are at distance less than $(\log L)^4$ from $0$ and $L$, respectively. For $r\in\N$, we denote by $Z_{L,\beta}[r]$ the partition function restricted to those trajectories that do not have any bead larger than $r$, i.e.,

\begin{equation}
Z_{L,\beta}[r]=\sum_{l\in\Omega_L\colon |I_{j_{\text{max}}}|\leq r}e^{\beta H_{L}(l)}.
\end{equation}
At this stage, we pick $s>0$ and we let $\cA_{L,s}$ be the subset consisting of those trajectories having at most one bead larger than $s(\log L)^2$, i.e.,
\begin{equation}
\cA_{L,s}=\Bigl\{l\in\Omega_L\colon\bigl|\bigl\{j\in\{1,\dots,n_L(l)\}\colon\,|I_{j}|\geq s(\log L)^2\bigr\}\bigr|\leq 1\Bigr\}.
\end{equation}
Partition $\cA_{L,s}^c$ with respect to the locations of the two
subintervals $\{i_1+1,\dots,i_2\}$ and $\{i_3+1,\dots,i_4\}$
associated with the first two beads that are larger than $s (\log
L)^2$. For notational convenience we let $L_1:=i_2-i_1$ and
$L_2:=i_4-i_3$ be the length of these two first large beads. We do not
have Markov property but, with the help of Lemma \ref{lem:decoZ} below, we can estimate the partition function restricted to those trajectory that make a bead between two given steps.

Recall (cf.  notations introduced in Section \ref{maires} prior to Theorem \ref{Thm4}) that $x_1$ denotes the horizontal extension of the first bead, and that $u_{x_1}$ corresponds to its total length.
\begin{lemma}\label{lem:decoZ}  For $L\in \N$,  
\begin{equation}
\tfrac{1}{2}\,Z^\mathsf{o}_{L',\beta}\,Z_{L-L',\beta}\leq Z_{L,\beta}(u_{x_1}=L')\leq Z^\mathsf{o}_{L',\beta}\,Z_{L-L',\beta}\quad\text{for}\;L'\in\{1,\ldots,L\}.
\end{equation}
\end{lemma}
\begin{proof}[Proof of Lemma \ref{lem:decoZ}]
In the case $u_{x_1}=1$, the first bead contains only one horizontal step, hence the sign of the stretch after $x_1$ is arbitrary, so that obviously $Z_{L,\beta}(u_{x_1}=1)=Z^\mathsf{o}_{1,\beta}Z_{L-1,\beta}$. In case $u_{x_1}=L'>1$, note that the stretch $l_{x_1}$ is non-zero, therefore the next stretch has the same sign as $l_{x_1}$. By concatenating the trajectories
\begin{align}
Z_{L,\beta}(u_{x_1}=L')&=Z^\mathsf{o}_{L',\beta}(l_{N_{L'}}>0)Z_{L-L',\beta}(l_1\geq0)+Z^\mathsf{o}_{L',\beta}(l_{N_{L'}}<0)Z_{L-L',\beta}(l_1\leq0)\\
&=Z^\mathsf{o}_{L',\beta}\,Z_{L-L',\beta}(l_1\geq0).
\end{align}
In both cases, thanks to the symmetry of the stretches, we have
\begin{equation}
\tfrac{1}{2}\,Z^\mathsf{o}_{L',\beta}\,Z_{L-L',\beta}\leq Z_{L,\beta}(u_{x_1}=L')\leq Z^\mathsf{o}_{L',\beta}\,Z_{L-L',\beta}\quad\text{for}\;L'\in\{1,\ldots,L\}.
\end{equation}
\end{proof}

We resume the proof of  Theorem \ref{Thm4} and, we use Lemma \ref{lem:decoZ} to obtain
\begin{equation}
P_{L,\beta}(\cA_{L,s}^c)\leq\sum_{\stackrel{1\leq i_1<i_2<i_3<i_4\leq L}{L_1,L_2\geq s(\log L)^2}}\frac{Z_{i_1,\beta}\, \big[s(\log L)^2\big]\,Z_{L_1,\beta}^\mathsf{o}\, Z_{i_3-i_2,\beta}\big[s(\log L)^2\big]\,Z_{L_2,\beta}^\mathsf{o}\,Z_{L-i_4,\beta}}{Z_{L,\beta}},
\end{equation}
and we write the lower bound
\begin{equation}
Z_{L,\beta}\geq(\tfrac{1}{2})^3 Z_{i_1,\beta}\big[s(\log L)^2\big]\,Z_{L_1+L_2,\beta}^\mathsf{o}\,Z_{i_3-i_2,\beta}\big[s(\log L)^2\big]Z_{L-i_4,\beta}
\end{equation}
such that 
\begin{equation}\label{eq:maj}
P_{L,\beta}(\cA_{L,s}^c)\leq8\sum_{\stackrel{1\leq i_1<i_2<i_3<i_4\leq L}{L_1,L_2\geq s(\log L)^2}}\frac{Z_{L_1,\beta}^\mathsf{o}\,Z_{L_2,\beta}^\mathsf{o}}{Z_{L_1+L_2,\beta}^\mathsf{o}}.
\end{equation}
By using Proposition \ref{prop1} and the convex inequality
\begin{equation}
\sqrt{L_1}+\sqrt{L_2}-\sqrt{L_1+L_2}\geq\tfrac{1}{2}\sqrt{\min\{L_1,L_2\}},
\end{equation}
we can  bound from above the quantity in the sum in \eqref{eq:maj} by
\begin{align}\label{eq:maj2}
\frac{Z_{L_1,\beta}^\mathsf{o}Z_{L_2,\beta}^\mathsf{o}}{Z_{L_1+L_2,\beta}^\mathsf{o}}&\leq\frac{c_1^2(L_1+L_2)^\kappa}{c_2\sqrt{L_1L_2}}\;e^{-\tilde G(a_{\beta}) [\sqrt{L_1}+\sqrt{L_2}-\sqrt{L_1+L_2}]}\\
&\leq\frac{c_1^2(L_1+L_2)^\kappa}{c_2\sqrt{L_1L_2}}\;e^{-\tfrac{\tilde G(a_{\beta}) \sqrt{s}\log L}{2}}
\end{align}
and since $\frac{(L_1+L_2)^\kappa}{\sqrt{L_1L_2}}\leq L^\kappa$ we can state that, for $L$ large enough, \eqref{eq:maj} becomes
\begin{equation}
P_{L,\beta}(\cA_{L,s}^c)\leq \tfrac{8c_1^2}{c_2}\, L^{\kappa+4} e^{-\tfrac{\tilde G(a_{\beta}) \sqrt{s}\log L}{2}}.
\end{equation}
Therefore, it suffices to choose $\sqrt{s}=\tfrac{4(\kappa+4)}{c}$ to conclude that $\lim_{L\to\infty}P_{L,\beta}(\cA_{L,s}^c)=0$.

At this stage we set $\cB_{L,s}=\cA_{L,s}\cap\{N_L(l)\leq a_1\sqrt{L}\}$ and we can use Lemma \ref{lem2} and the fact that $P_{L,\beta}(\cA_{L,s}^c)$ vanishes as $L\to\infty$ to conclude that $\lim_{L\to \infty}P_{L,\beta}(\cB_{L,s})=1$. Moreover, it comes easily that under the event $\cB_{L,s}$ there is exactly one bead larger than $s(\log L)^2$ because if there were no bead larger than $s(\log L)^2$, then the total number of beads $n_L(l)$ would be larger than $\frac{L}{s(\log L)^2}$ which contradicts the fact that $N_L(l)\leq a_1\sqrt{L}$ because each bead contains at least one horizontal step and consequently $N_L(l)\geq n_L(l)$. Under the event $\cB_{L,s}$ we denote by $i_1$ and $i_2$ the end-steps of the maximal bead, i.e., $I_{j_{\text{max}}}=\{i_1+1,\dots,i_2\}$. Then, the proof of Theorem \ref{Thm4} will be complete once we show that there exists a $v>0$ such that
\begin{align}
\lim_{L\to\infty}P_{L,\beta}(\cB_{L,s}\cap\{i_1\geq v(\log L)^4\})&=0\\
\lim_{L\to\infty}P_{L,\beta}(\cB_{L,s}\cap\{i_2\leq L-v(\log L)^4\})&=0.
\end{align}
We can bound from above
\begin{align}
\nonumber P_{L,\beta}(\cB_{L,s}\cap\{i_1\geq v(\log L)^4\})
&=\sum_{t=v(\log L)^4}^L P_{L,\beta}(\cB_{L,s}\cap\{i_1=t\})\\
\nonumber &\leq\sum_{t=v(\log L)^4}^L P_{L,\beta}\Big(\exists j\in\{1,\dots,n_L(l)\}\colon u_{x_j}=t,\\
\nonumber &\hspace{3.5cm}|I_d|\leq s(\log L)^2\quad\forall d\in\{1,\dots,j\}\Big)\\
&\leq\tfrac{1}{2}\sum_{t=v(\log L)^4}^L\frac{Z_{t,\beta}[s(\log L)^2]\,Z_{L-t,\beta}}{Z_{t,\beta}\,Z_{L-t,\beta}},
\end{align}
which finally gives
\begin{align}
P_{L,\beta}(B_{L,s}\cap\{i_1\geq v(\log L)^4\})
&\leq\tfrac{1}{2}\sum_{t=v(\log L)^4}^L P_{t,\beta}\big(|I_{j_{\text{max}}}|\leq s(\log L)^2\big).
\end{align}
We note that, under $P_{t,\beta}$ and on the event $\{|I_{j_{\text{max}}}|\leq s(\log L)^2\}$, the number of beads is larger than $\frac{t}{s(\log L)^2}$, therefore $N_{t}(l)\geq\frac{t}{s(\log L)^2}$ and since $\sqrt{t}\geq\sqrt{v}(\log L)^2$ we obtain that $N_{t}(l)\geq \sqrt{t}(\sqrt{v}/s)$. By choosing $v=(a_1 s)^2$, we can apply Lemma \ref{lem2} to get
\begin{align}\label{ths}
\nonumber P_{L,\beta}(\cB_{L,s}\cap\{i_1\geq v(\log L)^4\})&\leq\tfrac{1}{2}\sum_{t=v(\log L)^4}^L P_{t,\beta}\big(N_{t}(l)\geq a_1\sqrt{t}\big)\\
&\leq\tfrac{1}{2}a_2\,\sum_{t=v(\log L)^4}^L e^{-a\sqrt{t}}.
\end{align}
Since the sum in \eqref{ths} vanishes as $L\to\infty$, the proof is complete.

\subsection{Proof of Proposition \ref{prop1}}\label{pr:prop1}
We recall the definition of the one bead partition function introduced in Section \ref{sec:rep}, equations (\ref{deftu}--\ref{eq:dist11}). Henceforth, we will use the notation $\tilde{Z}_{L,\beta}^{\mathsf{o}}=Z_{L,\beta}^{\mathsf{m},\mathsf{o}} e^{-\beta L}/c_\beta$, so that Proposition \ref{prop1} will be proven once we show that there exist $c_1,c_2>0$ and $\kappa>1/2$ such that
\begin{equation}\label{inb}
\frac{c_1}{L^\kappa}\,e^{-\tilde G(a_{\beta}) \, \sqrt{L}}\leq \tilde{Z}_{L,\beta}^{\mathsf{o}}\leq\frac{c_2}{\sqrt{L}}\,e^{- \tilde G(a_{\beta})\, \sqrt{L}},\quad\text{for $L\in \N$ }.
\end{equation}

We will prove \eqref{inb} subject to Lemma \ref{lem1} below and
Proposition \ref{prop:comppos}. The proof of Lemma \ref{lem1} is given
in Section \ref{pr:lem2} whereas the proof of Proposition
\ref{prop:comppos} is postponed to Section \ref{sec:wulff}. For $K\subset\{1,\dots,L\}$, we set
\begin{equation}\label{form}
\tilde Z^{\mathsf{o}}_{L,\beta}(N\in K)=2\sum_{N\in K}\left(\Gamma(\beta)\right)^N\mathbf{P}_{\beta}(\mathcal{V}^+_{N+1,L-N})\,,
\end{equation}
and similarly we have
 \begin{equation}\label{eq:partfunco}
 \tilde Z^{\mathsf{o}}_{L,\beta}=
 2\sum_{N=1}^{L}\left(\Gamma(\beta)\right)^N\mathbf{P}_{\beta}(\mathcal{V}^+_{N+1,L-N}).
 \end{equation}

\begin{lemma}\label{lem1}
For $\beta>\beta_c$, there exists $a_2>a_1>0$ such that for $L\in\mathbb{N}$,
\begin{equation}\label{desc}
\lim_{L\to \infty}\frac{\tilde Z^{\mathsf{o}}_{L,\beta}(a_1\sqrt{L}\leq N\leq a_2\sqrt{L})}{\tilde Z^{\mathsf{o}}_{L,\beta}}=1.
\end{equation}
\end{lemma}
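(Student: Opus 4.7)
The plan is a Laplace-type concentration analysis of the representation~\eqref{eq:dist11}. Writing $T_L(N):=(\Gamma^\mm(\beta))^N\,\mathbf{P}_\beta(\mathcal{V}^+_{N+1,L-N})$ so that $\tilde Z^{\mathsf{m},\mathsf{o}}_{L,\beta}=2\sum_{N=1}^L T_L(N)$, the goal is to prove that this sum is concentrated on $N$ of order $\sqrt L$. The relevant rate function is $\tilde G_\mm$ of~\eqref{defg}, whose strict concavity, strict negativity on $(0,\infty)$ and unique interior maximizer $a_\mm(\beta)\in(0,\infty)$ are proved at the start of Section~\ref{pp5}.

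For the core regime, I fix $\eta>0$ so small that $K:=[\eta,\eta^{-1}]$ contains $a_\mm(\beta)$ in its interior and, in addition, $\eta^{-1}|\log\Gamma^\mm(\beta)|>|\tilde G_\mm(a_\mm(\beta))|+1$. For $N=\lfloor a\sqrt L\rfloor$ with $a\in K$, setting $n=N+1$ and using that a positive excursion satisfies $A_n=nY_n$, the area constraint $A_n=L-N$ becomes $Y_n=(L-N)/n$, i.e., $q_n:=Y_n/n=(L-N)/n^2\to 1/a^2$ uniformly in $a\in K$. Since $q_n$ lies in a compact subset of $(0,\infty)$, Propositions~\ref{lem:impor} and~\ref{prop:comppos} combine with the Lipschitz control~\eqref{eq:convuniff1} and the smoothness of $\tilde h_0(\cdot,0)$ and $L_\Lambda(\tilde H(\cdot,0))$ near $1/a^2$ to yield, uniformly in $a\in K$,
\[
T_L(N)=L^{-\alpha}\,e^{\sqrt L\,\tilde G_\mm(a)+O(1)}
\]
for some fixed $\alpha>0$. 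Strict concavity of $\tilde G_\mm$ then provides, for any $a_1<a_\mm(\beta)<a_2$ in $K$, a $\delta>0$ with $\tilde G_\mm(a)\le\tilde G_\mm(a_\mm(\beta))-\delta$ on $K\setminus[a_1,a_2]$; the total contribution from $N$ with $N/\sqrt L\in K\setminus[a_1,a_2]$ is thus at most $L^{1-\alpha}e^{\sqrt L(\tilde G_\mm(a_\mm(\beta))-\delta)}$, dwarfed by the single term $T_L(\lfloor a_\mm(\beta)\sqrt L\rfloor)\ge L^{-\alpha}e^{\sqrt L\,\tilde G_\mm(a_\mm(\beta))}$.

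It remains to discard $N/\sqrt L\notin K$. For $N\ge\eta^{-1}\sqrt L$, the trivial bound $\mathbf{P}_\beta(\mathcal{V}^+_{N+1,L-N})\le 1$ and $\Gamma^\mm(\beta)<1$ (consequence of $\beta>\beta_c^\mm$ and the characterization $\Gamma^\mm(\beta_c^\mm)=1$ from Section~\ref{pfTh3}) give $T_L(N)\le(\Gamma^\mm(\beta))^{\eta^{-1}\sqrt L}$; summed over such $N$ this is negligible compared with $e^{\sqrt L\,\tilde G_\mm(a_\mm(\beta))}$ by the choice of $\eta$. For $N\le\eta\sqrt L$, the crude bound $A_n\le n\sum_{j=1}^n|v_j|$ together with Markov's inequality applied to the finite exponential moment $\mathbf{E}_\beta(e^{\lambda|v_1|})$ (any $\lambda\in(0,\beta/2)$) yields
\[
\mathbf{P}_\beta(A_n\ge L-N)\le\mathbf{E}_\beta(e^{\lambda|v_1|})^n\,e^{-\lambda(L-N)/n},
\]
and since $(L-N)/n\ge\sqrt L/(2\eta)$ for $L$ large, the right-hand side is bounded by $\exp(-\lambda\sqrt L/(4\eta))$ once $\eta$ is small enough; summed over $N\le\eta\sqrt L$ this is again negligible.

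The main technical obstacle is in the core regime: passing from the compact-$q$ estimates of Propositions~\ref{lem:impor} and~\ref{prop:comppos}, which involve $\tilde h_0(q,0)$ and $L_\Lambda(\tilde H(q,0))$ evaluated at the exact $q=q_n=(L-N)/n^2$, to the closed form $\tilde G_\mm(a)$ involving $\tilde h_0(1/a^2,0)$ and $L_\Lambda(\tilde H(1/a^2,0))$ requires controlling the shift $|q_n-1/a^2|=O(1/\sqrt L)$ uniformly in $a\in K$. The Lipschitz control~\eqref{eq:convuniff1} of Proposition~\ref{convunif} together with the smoothness of $\tilde h_0(\cdot,0)$ and $L_\Lambda(\tilde H(\cdot,0))$ provide exactly the required uniform error control, which is absorbed into the $O(1)$ correction in the exponent.
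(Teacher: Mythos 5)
Your proof is correct, but it takes a genuinely different (and considerably heavier) route than the paper's. The paper proves Lemma~\ref{lem1} by entirely elementary means: (i) a lower bound $\tilde Z^{\mathsf{m},\mathsf{o}}_{L,\beta}\geq c\,e^{-\gamma\sqrt L}$ obtained by computing the weight of a \emph{single explicit} trajectory $V^*\in\cV^+_{r+1,L-r}$ with $r\approx\sqrt L$ and constant height $\lfloor\sqrt L\rfloor$; (ii) for $N\geq a_2\sqrt L$, the trivial bounds $\mathbf{P}_\beta(\cV^+_{N+1,L-N})\leq1$ and $\Gamma^\mm(\beta)<1$, with $a_2$ chosen so that $a_2|\log\Gamma^\mm|>2\gamma$; and (iii) for $N\leq a_1\sqrt L$, the observation that a positive excursion with area $L-N$ in $\leq a_1\sqrt L$ steps must attain height $\gtrsim\sqrt L/a_1$, followed by a Cramér estimate. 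The constants in (ii)–(iii) are simply chosen so that the decay rate exceeds $2\gamma$; the precise exponential rate $\tilde G_\mm(a_\mm(\beta))$ is never invoked. You instead deploy the full local-CLT apparatus — Propositions~\ref{lem:impor} and~\ref{prop:comppos}, Lemma~\ref{diffeo}, and the variational function $\tilde G_\mm$ from \eqref{defg} together with its strict concavity — to identify the exact rate $\tilde G_\mm(a)$ of each term $T_L(N)$ and then run a Laplace-type argument. This works (the propositions you cite and the properties of $\tilde G_\mm$ are all proven independently of Lemma~\ref{lem1}, so there is no circularity), and it essentially establishes Proposition~\ref{prop1} along the way, whereas the paper deliberately proves Lemma~\ref{lem1} first by soft arguments and only then upgrades to the sharp two-sided bound of Proposition~\ref{prop1}. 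Minor presentational quibble: your ``$T_L(N)=L^{-\alpha}e^{\sqrt L\,\tilde G_\mm(a)+O(1)}$ for some fixed $\alpha>0$'' conflates two different polynomial prefactors (the lower bound carries an extra $n^{-\mu}$ from Proposition~\ref{prop:comppos}); this does not affect the argument since the $e^{-\delta\sqrt L}$ gap absorbs any polynomial, but a two-sided statement with separate exponents would be cleaner.
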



By using Lemma \ref{lem1}, we note that 
it suffices to prove \eqref{inb} with $\tilde Z^{\mathsf{o}}_{L,\beta}\big(N\in\sqrt{L}\,[a_1,a_2]\big)$ instead of $\tilde Z^{\mathsf{o}}_{L,\beta}$. For the ease of notation, we will rather take $a_2$ a bit larger and consider $\tilde Z^{\mathsf{o}}_{L,\beta}\big(1+N \in\sqrt{L}\,[a_1,a_2]\big)$. In view of \eqref{form}, we write
\begin{equation}\label{thh}
\textstyle\tilde Z^{\mathsf{o}}_{L,\beta}\big(1+N\in\sqrt{L}\,[a_1,a_2]\big)=
2\sum_{N=a_1\sqrt{L}}^{a_2\sqrt{L}}\left(\Gamma(\beta)\right)^{N-1}\mathbf{P}_{\beta}(\mathcal{V}^+_{N,L-N+1}).
\end{equation}
For $n\in\mathbb{N}$, we recall \eqref{defcV} and \eqref{eq:Yn} and we note that $nY_n=A_n$ on the set $\{V_n=0,\,V_i>0\ \forall i\in[1,N-1]\cap\mathbb{N}\}$. Therefore, we set $q_{N,L}:=\frac{L-N+1}{N^2}$ for $N\in \sqrt{L} [a_1,a_2] \cap \N$ and we can write
\begin{equation}\label{eq:vrep}
\mathcal{V}^+_{N,L-N+1}=\{V\colon Y_N=Nq_{N,L},\,V_N=0,\,V_i>0\ \forall i\in[1,N-1]\cap\mathbb{N}\}.
\end{equation}
At this stage, our aim is to bound from above and below the quantities $\mathbf{P}_{\beta}(\mathcal{V}^+_{N,L-N+1})$ for $N\in \sqrt{L}[a_1,a_2]\cap \N$. The upper bound is obvious, i.e.,  
\begin{equation}\label{ptg}
\mathbf{P}_{\beta}(\mathcal{V}^+_{N,L-N+1})\leq\mathbf{P}_{\beta}(Y_N=Nq_{N,L},\,V_N=0),
\end{equation}
while the lower bound is obtained as follows.
Since $q_{N,L}\in\bigl[\frac{1}{2a_2^2},\frac{1}{a_1^2}\bigr]$ when $N\in\sqrt{L}[a_1,a_2]$, we can apply  Proposition \ref{prop:comppos} to claim that, there exists $C,\mu>0$ such that for $L$ large enough,
\begin{equation}\label{ptg2}
\mathbf{P}_{\beta}(\mathcal{V}^+_{N,L-N+1})\geq\tfrac{C}{N^\mu}\mathbf{P}_{\beta}(Y_N=Nq_{N,L},\,V_N=0),\quad N\in\sqrt{L}[a_1,a_2]\cap \N.
\end{equation}
By using again the fact that  $q_{N,L}\in\bigl[\frac{1}{2a_2^2},\frac{1}{a_1^2}\bigr]$ when $N\in\sqrt{L}\,[a_1,a_2]$, we can apply Proposition \ref{lem:impor}, which provides a lower and an upper bound on $\mathbf{P}_{\beta}(Y_N=Nq_{N,L},\,V_N=0)$. By combining these last two bounds with  (\ref{ptg}--\ref{ptg2}) and by setting $\kappa=1+\mu/2$ we can assert that there exists $R_1>R_2>0$ such that for $L$ 
large enough and all $N\in \sqrt{L}\, [a_1,a_2]$ we have that
\begin{align}\label{ptg1}
\tfrac{R_2}{L^\kappa}\, &e^{N \big[-\tilde h_{0}(q_{N,L},0)\,q_{N,L} +\mathfrak{L}_{\Lambda}(\tilde {\bf h}(q_{N,L},0))\big]}\\
\nonumber&\hspace{2cm}\leq \mathbf{P}_{\beta}(\cV^+_{N,L-N+1})
\leq\tfrac{R_1}{L}\,e^{N\big[ -\tilde h_{0}(q_{N,L},0)\,q_{N,L} +\mathfrak{L}_{\Lambda}(\tilde {\bf h}(q_{N,L},0))\big]}.
\end{align}
At this stage, we recall the definition of $\tilde G$ in \eqref{defg} and we set  
\begin{equation}\label{defQ}
Q_{L,\beta}:= \sum_{N=a_1\sqrt{L}}^{a_2\sqrt{L}} e^{\sqrt{L}\, G_{L,N}}
\end{equation} 
with 
\begin{equation}\label{eqqc2}
 G_{L,N}=\tfrac{N }{\sqrt{L}}\,  \big(q_{N,L}\big)^{1/2} \,\tilde G\Big(\tfrac{1}{(q_{N,L})^{1/2}}\Big)
\end{equation}
and we use \eqref{form} and \eqref{ptg1} to claim that there exists $R_3>R_4>0$
(depending on $\beta$ only) such that
for $L$ large enough,
\begin{equation}\label{QZbounds}
\tfrac{R_4}{L^\kappa} \,Q_{L,\beta}\leq\tilde Z^{\mathsf{o}}_{L,\beta}\big(N\in\sqrt{L}\,[a_1,a_2]\big)\leq \tfrac{R_3}{L} \, Q_{L,\beta}.
\end{equation}
We recall that $a\mapsto \tilde G(a)$ is a strictly negative and strictly concave function on $(0,\infty)$ and reaches its unique maximum at $a_{\beta}$, which obviously belongs to $[a_1,a_2]$. 
Since, by Lemma \ref{diffeo}, $a\mapsto \tilde G(a)$ is $\mathcal{C}^1$ on $(0,\infty)$, we can assert that it is Lipschitz on each compact subset of $(0,\infty)$. Moreover, there exists a $C>0$ such that $|q_{N+1,L}-q_{N,L}|\leq C/\sqrt{L}$ for $N\in\sqrt{L}\,[a_1,a_2]$ and
we have that
\begin{equation}\label{encad}
{\textstyle \Big(1-\frac{a_2}{\sqrt{L}}\Big)^{\frac{1}{2}}\leq \tfrac{N}{\sqrt{L}} \, (q_{N,L})^{\frac{1}{2}}\leq \Big(1-\frac{a_1}{\sqrt{L}}\Big)^{\frac{1}{2}}}, \quad N\in \sqrt{L}[a_1,a_2],
\end{equation}
 therefore, we can take the supremum of $G_{L,N}$ on $N\in\bigl[a_1\sqrt{L},a_2\sqrt{L}\,\bigr]\cap\mathbb{N}$ and it comes that
\begin{equation}\label{approx}
\sup\big\{G_{L,N};\,N\in\sqrt{L}\,[a_1,a_2]\cap\mathbb{N}\big\}= \tilde  G(a_{\beta})+O(\tfrac{1}{\sqrt{L}}).
\end{equation}
By putting together \eqref{defQ} and \eqref{approx}  we obtain that there exists $R_5>R_6>0$
such that   for L large enough,
\begin{equation}\label{limp}
R_6\, e^{\tilde G(a_{\beta}) \sqrt{L}}\leq Q_{L,\beta}\leq R_5 \sqrt{L}\, e^{\tilde  G(a_{\beta})\sqrt{L}}.
\end{equation}
At this stage it suffices to combine \eqref{QZbounds} with \eqref{limp} to complete the proof of \eqref{inb} with $\kappa=\mu/2+1$.

\subsection{Proof of Lemmas \ref{lem2} and \ref{lem1}}\label{pr:lem2}
We will only display the proof of Lemma \ref{lem1} because the proof of Lemma \ref{lem2} is obtained in a very similar manner. We recall \eqref{form} and \eqref{eq:partfunco}  and we will first show that there exists $\gamma>0$ and $c>0$ such that
\begin{equation}\label{rtf}
\tilde Z_{L,\beta}^{\mathsf{o}}\geq c\,e^{-\gamma\sqrt{L}},\quad L\in\mathbb{N}.
\end{equation}
Then, we will show that there exist $a_2>a_1>0$ and $c_1,c_2>0$ such that 
\begin{align}\label{rtf2}
\nonumber\tilde Z_{L,\beta}^{\mathsf{o}}(N\geq a_2\sqrt{L})&\leq c_2\,e^{-2\gamma\sqrt{L}},\quad L\in\mathbb{N},\\
\tilde Z_{L,\beta}^{\mathsf{o}}(N\leq a_1\sqrt{L})&\leq c_1\,e^{-2\gamma\sqrt{L}},\quad L\in\mathbb{N}.
\end{align}
Putting together \eqref{rtf} and \eqref{rtf2}, we will immediately obtain \eqref{desc}. To begin with, set $r:=\Big\lfloor\tfrac{L}{1+\lfloor\sqrt{L}\rfloor}\Big\rfloor$, $u:=L-r-(r-1)\lfloor\sqrt{L}\rfloor$ and note that $u\in\{\lfloor \sqrt{L}\rfloor,\dots,2\lfloor \sqrt{L}\rfloor\}$. Then, consider the trajectory $V^*\in\mathcal{V}_{r+1,L-r}^+$ defined as $V_0=V_{r+1}=0$, $V_1=\dots=V_{r-1}=\lfloor\sqrt{L}\rfloor$ and $V_r=u$. One can therefore compute
\begin{equation}\label{dede}
\mathbf{P}_{\beta}(V^*)=\big(\tfrac{1}{c_\beta}\big)^{r+1}e^{-\tfrac{\beta}{2}(2u)}\geq
\big(\tfrac{1}{c_\beta}\big)^{r+1}e^{-2\beta \lfloor\sqrt{L}\rfloor}  ,
\end{equation}
and consequently by restricting the sum in \eqref{form} to $N=r$, by using \eqref{dede} and the inequality $\lfloor\sqrt{L}\rfloor\leq \sqrt{L}$, we obtain 
\begin{equation}
\tilde Z_{L,\beta}^{\mathsf{o}}\geq\tfrac{2}{c_\beta}\left(\tfrac{\Gamma(\beta)}{c_\beta}\right)^r\ e^{-2\beta\sqrt{L}}.
\end{equation}
It remains to note that $r\leq\sqrt{L}$ and to recall that $c_\beta>1$ and that $\Gamma(\beta)<1$ because $\beta>\beta_c$. This is sufficient to obtain \eqref{rtf}.

Proving the first inequality in \eqref{rtf2} is easy because $\Gamma(\beta)<1$ and thus,  we can use \eqref{form} to claim
that there exists a $C>0$ such that 
\begin{equation}
\tilde Z^{\mathsf{o}}_{L,\beta}(N\geq a_2\sqrt L)\leq 2\sum_{N=a_2\sqrt L}^{\infty} \left(\Gamma(\beta)\right)^N\leq C e^{a_2 \log (\Gamma(\beta)) \sqrt{L}}.
\end{equation}
Since $\log (\Gamma(\beta))<0$, it suffices to choose $a_2$ large enough to obtain the first inequality in \eqref{rtf2}.

To prove the last inequality in \eqref{rtf2}, we note that, for $N\leq a_1\sqrt{L}$ and for all $(V_i)_{i=0}^{N+1}\in\mathcal{V}^+_{N+1,L-N}$ we have $\max\{V_j,\,j\in\{1,\dots,N\}\}\geq\frac{L-N}{N}\geq\frac{\sqrt{L}}{a_1}-1$ and therefore, for $L$ large enough we have
\begin{align}
\mathbf{P}_{\beta}(\mathcal{V}^+_{N+1,L-N})&\leq\mathbf{P}_{\beta}\Big(\max\{V_j,j\leq a_1\sqrt{L}\}\geq\tfrac{\sqrt{L}}{2a_1}\Big)\\
&\leq\textstyle\mathbf{P}_{\beta}\Big(\sum_{i=1}^{a_1\sqrt{L}}|U_i|>\tfrac{\sqrt{L}}{2 a_1}\Big),
\end{align}
and since $U_1$ has some finite exponential moments, we can apply a standard Cramer's Theorem to obtain that for $L$ large enough, there exists $g(a_1)>0$ such that $\lim_{a_1\to 0^+}g(a_1)=\infty$ and that $\mathbf{P}_{\beta}(\mathcal{V}^+_{N+1,L-N})\leq e^{-g(a_1)\sqrt{L}}$ for $N\leq a_1\sqrt{L}$. Therefore, by taking $a_1$ small enough we obtain the second inequality in \eqref{rtf2}, which completes the proof of Lemma \ref{lem1}.

\subsection{Proof of Theorem \ref{Prop5} (Horizontal extension)}\label{pp5}
To begin this section, we prove that $\tilde G$ is strictly concave and reaches its maximum at a unique point $a_\beta\in (0,\infty)$. Recall \eqref{defg} and compute its first two derivatives (by using that $\nabla \Llam(\tilde {\bf h}(q,0))=(q,0)$), i.e.,
\begin{align}
\frac{d}{da}\tilde{G}(a)&=\log \Gamma(\beta)+\tfrac{1}{a^2}\tilde
h_0\bigl(\tfrac{1}{a^2},0\bigr)+\Llam(\tilde{\bf h}(\tfrac{1}{a^2},0)),\\
\frac{d^2}{da^2}\tilde{G}(a)&=-\tfrac{2}{a^3}\tilde h_0\bigl(\tfrac{1}{a^2},0\bigr)-\tfrac{4}{a^5}\partial_1\tilde h_0\bigl(\tfrac{1}{a^2},0\bigr).
\end{align}
It suffices to show that $\frac{d^2}{da^2}\tilde{G}(a)<0$ on $(0,\infty)$ and that
$\frac{d}{da}\tilde{G}(a)$ has a zero on $(0,\infty)$.  
Since $\tilde h_0(x,0)=-2 \tilde h_1(x,0)$ (recall Remark \ref{rem:relhzerohuncontinu}), we consider
$R:u\mapsto \int_0^1 x \Ll'((x-\tfrac12) u) dx$ so that $\partial_1
(\Llam) (\tilde {\bf h}(x,0))=R(\tilde h_0(x,0))$. Clearly $R(0)=0$ and $R'(u)=2\int_0^1 x^2 \Ll''(xu)
dx$ because $\Ll$ is even (recall \eqref{defL}). Therefore $R'(u)>0$ when $u\neq 0$ and $R<0$ on $(-\infty,0)$ and $R>0$ on $(0,\infty)$.  Since $R(\tilde h_0(x,0))=x$ for $x\in \R$, we can claim that $\tilde h_0(x,0)>0$ for  $x\in (0,\infty)$ and by differentiating this latter equality we obtain that $\partial_1 \tilde h_0(x,0)=1/ R'(\tilde h_0(x,0))$ which is strictly positive on $(0,\infty)$.
This completes the proof.

Let us start the proof of Theorem \ref{Prop5}. Recall that $i_1$ and $i_2$ are the end-steps of the largest bead $I_{j_{\text{max}}}$, i.e., $I_{j_{\text{max}}}=\{i_1+1,\dots,i_2\}$. For $v>0$, we let
\begin{equation}
\cT_{L,v}:=\bigl\{l\in\Omega_L\colon i_1\leq v(\log L)^4,\,i_2\geq L-v(\log L)^4,\,I_{j_{\text{max}}}=\{i_1+1,\dots,i_2\}\bigr\}.
\end{equation} 
By Theorem \ref{Thm4}, there exists a $v>0$ such that $\lim_{L\to\infty}P_{L,\beta}(\cT_{L,v})=1$. Therefore, the proof will be complete once we show that
\begin{equation}
\lim_{L\to\infty}P_{L,\beta}\biggl(\Bigl\{\Bigl|\frac{N_L(l)}{\sqrt{L}}-a_\beta\Bigr|>\epsilon\Bigr\}\cap \cT_{L,v}\biggr)=0.
\end{equation}
Let $N_{I_{j_{\text{max}}}}$ denote the number of horizontal steps made by the random walk in its largest bead. Pick $\gep'<\gep$ and since the first step and the last step of the largest bead are at distance less than $v (\log L)^4$ from $0$ and $L$, respectively, we can write that for $L$ large enough
\begin{align}\label{eq:vmax}
\nonumber P_{L,\beta}\biggl(\Bigl\{\Bigl|\frac{N_L(l)}{\sqrt{L}}-a_\beta\Bigr|>\epsilon\Bigr\}\cap \cT_{L,v}\biggr)&\\
\nonumber \leq\sum_{\substack{1\leq i_1\leq v(\log L)^4\\  L-v(\log L)^4\leq i_2\leq L}}&
P_{L,\beta}\biggl(\Bigl|\tfrac{N_{I_{j_{\text{max}}}}}{\sqrt{i_2-i_1}}-a_\beta\Bigr|>\epsilon',\,I_{j_{\text{max}}}=\{i_1+1,\dots,i_2\}\biggr)\\
&\leq 4 \sum_{\substack{1\leq i_1\leq v(\log L)^4\\ L-v(\log L)^4\leq i_2\leq L}} \frac{Z^{\mathsf{o}}_{i_2-i_1,\beta}\biggl(\Bigl|\tfrac{N}{\sqrt{i_2-i_1}}-a_\beta\Bigr|>\epsilon'\biggr)}{Z^{\mathsf{o}}_{i_2-i_1,\beta}},
\end{align}
where the coefficient $4$ in front of the r.h.s. in \eqref{eq:vmax} comes from a direct application of Lemma \ref{lem:decoZ}.
Now, we focus on the numerator of the r.h.s. in \eqref{eq:vmax}  and since $\tilde{G}$ is strictly concave and reaches its maximum at $a_\beta$ we can claim that the maximum of $\tilde G$ on $(0,a_\beta-\gep']\cup [a_\beta+\gep',\infty)$ is given by  $T(\gep')=\max\{\tilde G(a_\beta-\gep'),\tilde  G(a_\beta+\gep')\}$.  
We  proceed as in \eqref{thh}-\eqref{approx} and we get that there exits a $C_1>0$ such that
\begin{equation}
Z^{\mathsf{o}}_{i_2-i_1,\beta}\biggl(\Bigl|\tfrac{N}{\sqrt{i_2-i_1}}-a_\beta\Bigr|>\epsilon'\biggr)\leq\tfrac{C_1}{\sqrt{i_2-i_1}}\,  e^{\beta (i_2-i_1)} \,e^{T(\epsilon')\sqrt{i_2-i_1}}.
\end{equation}
We apply Proposition \ref{prop1} and  the denominator can be bounded from below as
\begin{equation}
Z^{\mathsf{o}}_{i_2-i_1,\beta}\geq\tfrac{C_2}{(i_2-i_1)^\kappa}\,  e^{\beta (i_2-i_1)}\,e^{\tilde{G}(a_\beta)\sqrt{i_2-i_1}},
\end{equation}
for some constants $\kappa>1/2$ and $C_2>0$. Since $L-2v(\log L)^4\leq i_2-i_1\leq L$, we can state that, for $L$ large enough, \eqref{eq:vmax} becomes
\begin{equation}
P_{L,\beta}\biggl(\Bigl\{\Bigl|\frac{N_L(l)}{\sqrt{L}}-a_\beta\Bigr|>\epsilon\Bigr\}\cap\,  \cT_{L,v}\biggr)\leq C_3 \,L^{\kappa-\frac{1}{2}} \,\log(L)^8\,e^{-(\tilde{G}(a_\beta)-T(\gep'))\sqrt{L-2v(\log L)^4}}.
\end{equation}
Since $\tilde{G}(a_\beta)>T(\gep')$, the right hand side vanishes as $L\to\infty$ and this completes the proof.

\subsection{Proof of Theorem \ref{Convenv} (Wulff shape)}\label{pr:Thm7}
Before displaying the proof of Theorem \ref{Convenv}, we provide a rigorous definition of $\gamma^*_{\beta}$
and we associate with each trajectory $l\in \Omega_L$ the process $M_l$ that links the middle of each stretch consecutively.

The Wulff shape $\gamma^*_{\beta}$ can be defined\footnote{the set on
  the right hand side of \eqref{defga} is not empty since it contains
  the hat function $\gamma(t)=\gamma(1-t)=\frac{2t}{a_\beta}$ for
  $0\le t\le \frac12$} as 
\be{defga}
\gamma^*_{\beta}=\text{argmin}\big\{J(\gamma),\, \gamma \in \cB_{[0,1]},\, {\textstyle \int_0^1\gamma(t) dt}=\tfrac{1}{a_\beta^2}, \gamma(0)=\gamma(1)=0 \big\},
\ee
where  $\cB_{[0,1]}$ is the set containing the cadlag  real functions defined on $[0,1]$, where $J: \cB_{[0,1]}\to [0,\infty]$ is defined as
\be{defJ}
J(\gamma)=\begin{dcases*}
	\int_0^1 \Ll^*(\gamma'(t)) dt\quad \text{if} \quad \gamma \in \cA\cC,\\
   +\infty \quad \text{otherwise},
  \end{dcases*}
\ee
where $\cA\cC$ is the set of absolutely continuous functions and where $\mathfrak{L}^*$ is the Legendre transform of $\mathfrak{L}$, i.e.,
\be{defLam}
\Ll^*(u)=\sup\big\{h u-\Ll(h), h\in (-\tfrac{\beta}{2},\tfrac{\beta}{2})\big\}, \quad u\in \R.
\ee
Using the duality between $\Ll$ and $\Ll^*$ we easily obtain the
formula \eqref{defgamma} given in the introduction,
which easily implies (recall \ref{defg}) that $
\tilde{G}(a_{\beta})=a_\beta (\log \Gamma(\beta)-J(\gamma^*_{\beta}))$.
Finally, we note that one can prove without further difficulty that
\be{argm}
\{-\gamma^*_{\beta}, \gamma^*_{\beta}\}=\text{argmin}\big\{J(\gamma),\, \gamma \in \cB_{[0,1]},\, A(\gamma)=\tfrac{1}{a_\beta^2}, \gamma(0)=\gamma(1)=0 \big\},
\ee
where $A(\gamma):=\int_0^1|\gamma(s)| ds$  is the geometric area enclosed between the graph of $\gamma$ and 
the $x$-axis.

We recall the definition of $\cE_{l}^+$ and $\cE_l^-$ in \eqref{trek} and  
we also associate with each $l \in \cL_{N,L}$ the path $M_l=\big(M_{l,i}\big)_{i=0}^{N+1}$ that links the middles of each stretch consecutively and is defined as  $M_{l,0}=0$ 
\be{droitmi}
M_{l,i}=l_1+\dots+l_{i-1}+\frac{l_i}{2},\quad i\in \{1,\dots,N\},
\ee
and $M_{l,N+1}=l_1+\dots+l_N$. We recall that the $T_N$ transformation, defined in Section \ref{sec:rep}, associates with each $l\in \cL_{N,L}$ the path $V_l=(T_N)^{-1}(l)$
such that $V_{l,0}=0$,  $V_{l,i}=(-1)^{i-1} l_i$ for all $i\in\{1,\dots,N\}$ and $V_{l,N+1}=0$.
As a consequence, $\cE_l^+=M_l+\frac{|V_l |}{2}$ and $\cE_l^-=M_l-\frac{|V_l|}{2}$, i.e.,
\begin{align}\label{caract}
\nonumber \cE^+_{l,i}&=M_{l,i}+\frac{|V_{l,i}|}{2},\quad i\in \{0,\dots,N+1\},\\
\cE^-_{l,i}&=M_{l,i}-\frac{|V_{l,i}|}{2},\quad i\in \{0,\dots,N+1\},
\end{align}
and the path $(M_{l,i})_{i=0}^{N+1}$ can be rewritten with the increments $(U_i)_{i=1}^{N+1}$ of the $V_l$ random walk as
\be{defM}
M_{l,i}=\sum_{j=1}^{i} (-1)^{j+1} \frac{U_j}{2},\quad i\in \{1,\dots,N\}.\\
\ee
Similarly to what we did to define $\tilde \cE^+_l$ and $\tilde \cE^-_l$ in \eqref{defti}, we let 
$\tilde M_l$ and $\tilde{V}_l$ be  the time-space rescaled cadlag process associated to $M_l$ and $V_l$.
%
%


\noindent {\it Proof of Theorem \ref{Convenv}.}
Equations \eqref{caract} that allows to express $\cE^+_l$ and $\cE^-_l$ with the help of 
the two processes $V_l$ and $M_l$ can be translated in terms of the time-space rescaled processes
as $\tilde \cE_l^+=\tilde M_l+\frac{|\tilde V_l |}{2}$ and $\tilde \cE_l^-=\tilde M_l-\frac{|\tilde V_l|}{2}$. Therefore, Theorem \ref{Convenv} is a straightforward consequence of the two following Lemmas. 

\bl{ConvProf} For $\beta>\beta_c$ and $\gep>0$,
\begin{equation}\label{le1}
\lim_{L\to \infty} P_{L,\beta}\Big(  \big\| |\widetilde{V}_{l}|-\gamma^*_{\beta}\big\|_{\infty} >\gep \Big)=0.
\end{equation}
\el

\bl{tret} For $\beta>0$ and $\gep>0$,
\be{le2}
\lim_{L\to \infty} P_{L,\beta}\Big(  \big\|\widetilde{M}_{l}\big\|_{\infty} >\gep \Big)=0.
\ee
\el

\noindent \emph{Proof of Lemma \ref{ConvProf}.}
For  conciseness we set $\cU_{L,\gep}=\big\{l \in \Omega_L\colon\; \big\| |\widetilde{V}_{l}|-\gamma^*_{\beta}\big\|_{\infty} >\gep\big\}$. Thanks to Theorem \ref{Prop5}, Lemma \ref{ConvProf} will be proven once we show that there exists an $\eta>0$ such that 
\be{ddtt}
\lim_{L\to \infty} P_{L,\beta}\Big(\cU_{L,\gep} \cap \Big\{ \big|\tfrac{N_L(l)}{\sqrt{L}}- a_\beta\big|\leq \eta\Big\} \Big)=0.
\ee
We decompose  the left hand side in \eqref{ddtt} with respect to the value taken by $N_L(l)$, i.e.,
\be{ddtt1}
P_{L,\beta}\Big( \cU_{L,\gep} \cap \Big\{ \big|\tfrac{N_L(l)}{\sqrt{L}}- a_\beta\big|\leq \eta\Big\} \Big)=
 \sum_{N\in I_{\eta,L}}  P_{L,\beta}\Big( \cU_{L,\gep} \cap \{ N_L(l)=N\} \Big),
\ee
where $I_{\eta,L}=\big\{(a_\beta-\eta) \sqrt L, \dots, (a_\beta+\eta) \sqrt L\big\}$.
By recalling Section \ref{sec:rep}, the probability in the r.h.s. of \eqref{ddtt1} can be rewritten, with the help of the random walk representation, as 
\begin{align}\label{dd2}
P_{L,\beta}\Big( \cU_{L,\gep}& \cap \{ N_L(l)=N\} \Big)\\
\nonumber &=\frac{ (\Gamma(\beta))^N}{\tilde Z_{L,\beta}}  \,
\bP_\beta\Big( \big\| |\widetilde{V}_{N+1}|-\gamma^*_{\beta}\big\|_{\infty} >\gep, \widetilde{V}_{N+1}(1)=0, A\big(\widetilde{V}_{N+1}\big)=\tfrac{L-N}{(N+1)^2}\Big),
\end{align}
where $(V_i)_{i=0}^{N+1}$ is a random walk of law $\bP_\beta$ and $\tilde V_{N+1}$ is the time-space rescaled process associated with $(V_i)_{i=0}^{N+1}$, i.e.,
$$\tilde V_{N+1}(t)= \frac{1}{N+1}\,  V_{{\lfloor t\,(N+1)\rfloor}},\quad t\in [0,1],$$
and where  $\tilde{Z}_{L,\beta}=Z_{L,\beta} e^{-\beta L}/c_\beta$.
Note that there exists a function $g: \R^+\to \R^+$ such that $\lim_{\eta\to 0} g(\eta)=0$ and such that  for $N\in I_{\eta,L}$ the probability in the r.h.s. of \eqref{dd2} is bounded from above by $\bP_\beta(\widetilde V_N \in \c
H_{\gep,\eta})$, where
\be{defH}
\cH_{\gep,\eta}=\big\{\gamma \in \cB_{[0,1]} \colon  A(\gamma)\geq \tfrac{1}{a_\beta^2}-g(\eta), \gamma(0)=\gamma(1)=0, \big\| |\gamma|-\gamma^*_{\beta}\big\|_\infty\geq 
\gep\big\}.
\ee
Thus, we need to identify the exponential growth rate of  $\bP_\beta(\widetilde V_N \in \cH_{\gep,\eta})$. To that aim, we apply the Mogulskii Theorem (see \cite[Theorem 5.1.2]{DZ}) which ensures that  $(\widetilde V_N)_{N\in \N}$ follows a large deviation principle on the  set $\cB([0,1])$ endowed with  the supremum norm $\|\cdot\|_\infty$ and  with the good rate function $J$  defined in \eqref{defJ}.  Since 
$\cH_{\gep,\eta}$ is a closed subset of $\big(\cB_{[0,1]},\| \cdot \|_\infty\big)$ we can assert that 
\be{LDT}
\limsup_{n\to \infty} \tfrac{1}{N} \log  \bP_\beta(\widetilde V_N \in \cH_{\gep,\eta})\leq -\inf\{J(\gamma), \gamma\in \cH_{\gep,\eta}\}.
\ee 
We pick $M>\inf\{J(\gamma), \gamma\in \cH_{\gep,1}\}$ and set $\cH^M_{\gep,\eta}= \{\gamma\in \cH_{\gep,\eta}\colon\, J(\gamma)\leq M\}$ such that 
the inequality \eqref{LDT} becomes
\be{LDT2}
\limsup_{n\to \infty} \tfrac{1}{N} \log  \bP_\beta(\widetilde V_N\in \cH_{\gep,\eta})\leq -\inf\{J(\gamma), \gamma\in \cH^M_{\gep,\eta}\}.
\ee
At this stage, it remains to show that there exists $\alpha>0$ and $\eta_0>0$ such that 
for all $\eta\in(0,\eta_0]$, 
\be{inegali}
\inf\{J(\gamma), \gamma\in \cH^M_{\gep,\eta}\}-\alpha\geq \inf\{J(\gamma), \gamma\in \cH_{0,0}\}=J(\gamma^*_{\beta}).
\ee
Assume that \eqref{inegali} fails to be true, then, there exists a strictly positive sequence $(z_n)_{n\in \N}$ that tends to $0$ as $n\to \infty$ such that for all $n\in \N$ there exists a $\gamma_n\in \cH^M_{\gep,z_n}$ satisfying
$J(\gamma_n)\leq J(\gamma^*_{\beta})+1/n$. Since $J$ is a good rate function, we can assert that $\cH^M_{\gep,1}$ is a compact set of $(\cB_{[0,1]} , \| \cdot \|_\infty )$ and consequently $ \gamma_n$ is converging by subsequence towards some 
$\gamma_\infty\in \cH^M_{\gep,1}$. Since $A$ and $J$ are continuous and lower semi-continuous on $(\cB_{[0,1]},\|\cdot\|_\infty)$, respectively, it comes that $\gamma_\infty\in \cH^M_{\gep,0}$ and $J(\gamma_\infty)\leq J(\gamma^*_{\beta})$, which leads to a contradiction because $-\gamma^*_{\beta}$ and $\gamma^*_{\beta}$ are the unique maximizer of $J$ on $\cH_{0,0}$ and $\gamma_\infty\notin \{-\gamma^*_{\beta}, \gamma^*_{\beta}\}$. At this stage, we go back to \eqref{dd2} and we can write, for $\eta\in (0,1]$
\be{dd3}
P_{L,\beta}\Big( \cU_{L,\gep} \cap \{ |N_L(l)- a_\beta|\leq \eta\} \Big)\leq 
\frac{2\eta}{\tilde Z_{L,\beta}} \sqrt{L} \, (\Gamma(\beta))^{(a_\beta-\eta)\sqrt{L}} \,
\bP_\beta\big(\widetilde{V}_{N+1}\in \cH_{\gep, \eta}\big).
\ee
Thus, by \eqref{LDT2} and \eqref{dd3}  we can assert that for all $\eta\in (0,\eta_0]$ and for $L$ large enough
\begin{align}\label{dd4}
\nonumber P_{L,\beta}\Big( \cU_{L,\gep} \cap \{ |N_L(l)- a_\beta|\leq \eta\} \Big)&\leq 
\frac{2\eta\sqrt{L}}{\tilde Z_{L,\beta}} \, (\Gamma(\beta))^{(a_\beta-\eta)\sqrt{L}} \,
e^{-(a_\beta-\eta)\sqrt{L} (J(\gamma^*_{\beta})+\alpha)},\\
&\leq 
\frac{ 2\eta\sqrt{L}}{\tilde Z_{L,\beta}} e^{\sqrt{L} (a_\beta-\eta) (\log \Gamma(\beta)-J(\gamma^*_{\beta})-\alpha)}.
\end{align}

Recall the equality $\tilde{G}(a_{\beta})=a_\beta (\log \Gamma(\beta)-J(\gamma^*_{\beta}))$
and recall that for $\beta>\beta_c$, we have proved in \eqref{inb} that there exists $c_1>0$ and  $\kappa>0$ such that
for $L$ large enough,
\be{Zbound}
\tilde Z_{L,\beta}\geq  \tfrac{c_1}{L^\kappa}\, e^{\sqrt{L} \tilde G(a_{\beta})}.
\ee
Thus, we can use \eqref{dd4} to claim  that by choosing $\eta$ small
enough and $L$ large enough we have for a constant $c_2>0$,
\begin{align}\label{dd8}
P_{L,\beta}\Big( \cU_{L,\gep} \cap \{ |N_L(l)- a_\beta|\leq \eta\} \Big)&\leq 
\tfrac{1}{c_2} L^{\frac12+\kappa} e^{-\tfrac{\alpha}{2} a_\beta\sqrt{L}}.
\end{align}
which completes the proof of Lemma \ref{ConvProf}.\\

\noindent \emph{Proof of lemma \ref{tret}.}
Lemma \ref{tret} will be proven once we show that for all $\gep>0$,
\be{ddz}
\lim_{L\to \infty} P_{L,\beta}\Big(\tfrac{1}{1+N_L(l)} \max_{i\leq 1+N_L(l)}  |M_{l,i}| \geq \gep \Big)=0.
\ee
Proving \eqref{ddz} requires to control, under $P_{L,\beta}$, the probability that,  the gap between 
the modulus of the algebraic area ($ N_L(l)\, |Y_{l}|:=\,|\sum_{i=1}^{N_L(l)} V_{l,i}|$) and the geometric area ($\sum_{i=1}^{N_L(l)} |V_{l,i}|)$ of  the random walk trajectory $V_l=(T_{N_L(l)})^{-1}(l)$ associated 
with $l\in \Omega_L$ does not exceed $\log(L)^4$. This is the object of Lemma \ref{refine31} below.
\bl{refine31}
For $\beta>\beta_c$  there exists a $c>0$ such that 
\be{refine0}
\lim_{L\to \infty} \, P_{L,\beta}\big( N_L(l)\, |Y_l| \notin [L-N_L(l)-c(\log L)^4,L-N_L(l)] \big)=0.
\ee
\el

\begin{proof}

By Theorem \ref{Thm4} there exists a $c>0$ such that 
\be{ricol}
\lim_{L\to \infty}   P_{L,\beta}\big[ \valabs{I_{j_\text{max}}}\leq L-c (\log L)^4 \big]=0.
\ee
Note that for $l \in \Omega_L$, we have $\sum_{i=1}^{N_L(l)} |V_{l,i}|=\sum_{i=1}^{N_L(l)} |l_i|= L-N_L(l)$ and that, with the definition of $j_{\text{max}}$ and $x_{j_{\text{max}}}$ in \eqref{ut} and \eqref{ut2} we have also
\begin{align}\label{aira}
& \sum_{i=1}^{N_L(l)}  |V_{l,i}|-2 \sum_{i\notin \cO_l}   |V_{l,i}|        \leq \big|\sum_{i=1}^{N_L(l)} V_{l,i}\big| 
\leq \sum_{i=1}^{N_L(l)} |V_{l,i}|,
\end{align}
where $\cO_l=\{x_{j_{\text{max}}-1}+1,\dots,x_{j_{\text{max}}}\}$ gathers the indexes of those stretches 
in $l=(l_1,\dots,l_{N_L(l)})$ that  belong to the largest bead described by $l$.
Moreover, we note that $l\in \{ \valabs{I_{j_\text{max}}}\geq L-c (\log L)^4\}$ yields 
\be{aira2}
\sum_{i\notin \cO_l } |V_{l,i}| =\sum_{i\notin \cO_l } |l_i| \leq c (\log L)^4.
\ee
At this stage, we recall that $N_L(l) \, Y_{l}=\sum_{i=1}^{N_L(l)} V_{l,i}$ and we  use \eqref{aira} and \eqref{aira2} to assert that $l\in \{ \valabs{I_{j_\text{max}}}\geq L-c (\log L)^4\}$ implies $  N_L(l) \, | Y_{l} | \in [L-N_L(l)-2c(\log L)^4,L-N_L(l)]$.  It remains to use \eqref{ricol} to complete the proof of Lemma \ref{refine31}.

\epr

We resume the proof of Lemma \ref{tret}. We set $K_{L,\gep}=\{\tfrac{1}{1+N_L(l)} \max_{i\leq 1+N_L(l)}  |M_{l,i}| \geq \gep\}$ for $\gep>0$  and we set for $\eta>0$ 
\begin{align}
 R_{L,\eta}= \{ |\tfrac{N_L(l)}{\sqrt{L}}- a_\beta|\leq \eta\}\cap \{N_L(l)\, | Y_{l} | \in [L-N_L(l)-c(\log L)^4,L-N_L(l)] \}
\end{align}
Thanks to Theorem \ref{Prop5} and Lemma \ref{refine31}, it suffices to show that there exists $\eta>0$ such that 
for all $\gep>0$,
\be{dd}
\lim_{L\to \infty} P_{L,\beta}\big(K_{L,\gep}\cap R_{L,\eta}   \big)=0.
\ee
We decompose  the left hand side in \eqref{dd} with respect to  the value taken by $N_L(l)$ and $Y_{l}$, i.e.,
\begin{align}\label{dd1}
P_{L,\beta}\big( K_{L,\gep} \cap R_{L,\eta}\big)=
\sum_{N\in I_{\eta,L}} \sum_{q\in F_{L,N}}& \Big[P_{L,\beta}\big( K_{L,\gep} \cap \{ N_L(l)=N\}  \cap \{   Y_{l}=q\, (N+1)\} \big)\\
\nonumber &\,+  P_{L,\beta}\big( K_{L,\gep} \cap \{ N_L(l)=N\}  \cap \{ Y_{l}=-q\, (N+1)\} \big)\Big],
\end{align}
where 
\begin{align*}
I_{\eta,L}&=\big\{(a_\beta-\eta) \sqrt L, \dots, (a_\beta+\eta) \sqrt L\big\},\\
F_{L,N}&=\frac{1}{N (N+1)} \{L-N-c (\log L)^4,\dots,L-N\}.
\end{align*}
We recall the definition of $A_N$ below \eqref{eq:funch} and of $Y_N$ in \eqref{eq:Yn}. With the random walk representation we obtain, for 
$N\in I_{\eta,L}$ and $q\in F_{L,N}$, that 
\begin{align}\label{dd5}
\nonumber  P_{L,\beta}&\big( K_{L,\gep}\, \cap\, \{ N_L(l)=N\}  \cap \{  Y_l=q (1+N)\}\big)\\
\nonumber & = \frac{(\Gamma(\beta))^N} {\tilde Z_{L,\beta}}   \bP_\beta\big(A_N=L-N, Y_{N+1}=q (N+1), \tfrac{1}{1+N} \max_{i\leq 1+N}  |M_{N+1,i}| \geq \gep,V_{N+1}=0\big) \\
 & \leq \frac{(\Gamma(\beta))^N} {\tilde Z_{L,\beta}}   \bP_\beta\big(  Y_{N+1}=q (N+1), V_{N+1}=0\big) D_{N+1,q}
 \end{align}
where  $\tilde{Z}_{L,\beta}=Z_{L,\beta}\,  e^{-\beta L} /c_\beta$, where 
$(M_{N+1,i})_{i=0}^{N+1}$ is defined with the increments   $(U_i)_{i=1}^{N+1}$ of the $V$ random walk (recall \eqref{defM}) as
$M_{N+1,i}=\sum_{j=1}^{i} (-1)^{i+1} \frac{U_i}{2}$ for $i=1,\dots,N+1$, and where
\begin{align}\label{dd6}
D_{N,q}=\bP_\beta\big(\tfrac{1}{N} \max_{i\leq N}  |M_{N,i}| \geq \gep\ |\   Y_N= q N, V_{N}=0\big).
\end{align}
By picking $\eta=a_\beta/2$ we can easily check that there exists $[q_1,q_2]\subset (0,\infty)$ such that 
for all $N\in I_{\eta,L}$ we have 
$F_{N,L}\subset [q_1,q_2]$. We recall \eqref{revt} and  we tilt $\bP_\beta$ into $\bP_{N,{\bf h}_N^{q}}$ so that we can use Proposition \ref{lem:bHtilde} and claim  that there exists a $c>0$ such that for $L$ large enough,  we have
\begin{align}\label{dd66}
D_{N,q}\leq\frac{\bP_{N,{\bf h}_N^{q}}\big(\tfrac{1}{N} \max_{i\leq N}  |M_{N,i}| \geq \gep\big)}{\bP_{N,{\bf h}_N^{q}}\big( Y_N=q N, V_{N+1}=0\big)}\leq c N^2  \bP_{N,{\bf h}_N^{q}}\Big( \max_{i\leq N}  |M_{N,i}| \geq \gep N\Big).
\end{align}
At this stage, we use \eqref{dd1}, \eqref{dd5}, \eqref{dd66} and the inequalities $\Gamma(\beta)<1$ and \eqref{Zbound} to assert that  the proof of Lemma \ref{tret} will be complete once we show that for $[q_1,q_2]\in (0,\infty)$ and
$\gep>0$ there exists a $\theta>0$ such that for $N$ large enough we have
\be{confi}
\sup_{q\in [q_1,q_2]} \bP_{N,{\bf h}_N^q}\Big(\max _{i\leq N} |M_{N,i}| \geq \gep N\Big)\leq e^{-\theta N}.
\ee
We recall that, for $1\leq j\leq N$, we have  $\bE_{N,{\bf h}_N^{q}}(U_j)=\Ll'(h_{N}^j)$ with $h_N^j= (1-\tfrac{j}{N}) h_{N,0}^q+h_{N,1}^q$. 
As a consequence, and because of Lemma 
\ref{exist}, we can assert that, for $N$ large enough and uniformly in $q\in [q_1,q_2]$, all $h_N^i$ belong to some compact set 
$K\subset (-\frac{\beta}{2},\frac{\beta}{2})$.  Therefore,  we can show that there exists $c_1>0$ and $M_1>0$
 such that for $N$ large enough 
 \be{confit2}
\sup_{q\in [q_1,q_2]} \sup_{1\leq i\leq N} \bE_{N, {\bf h}_N^q}(e^{c_1 |U_i|})\leq M_1,
\ee
which is sufficient  to deduce, still for $N$ large enough, that there exists $c_2>0$ and $\delta_0>0$  such that
\be{consst2}
\sup_{q\in [q_1,q_2]} \sup_{1\leq i\leq N}  \sup_{\delta \in[-\delta_0,\delta_0]}  \bE_{N,{\bf h}_{N}^q}(e^{ \delta (U_i-L'(h_{N,j}) }))\leq e^{c_2\delta^2}.
\ee
Then , we set   
\be{defMH}
\widehat M_{N,i}=M_{N,i}-\bE_{N,{\bf h}_N^q}(M_{N,i})=\frac12 \sum_{j=1}^{i} (-1)^{j+1} (U_j-\Ll'(h_{N,j})),\quad i=1,\dots,N,
\ee
and since, under the law $\bP_{N,{\bf h}_N^q}$, the increments $(U_i)_{i=0}^{N}$  are independent, we deduce from 
\eqref{consst2} that, for $N$ large enough, there exists $c_3>0$ and $\delta_0>0$ such that 
\be{consst3}
\sup_{q\in [q_1,q_2]} \sup_{1\leq i\leq N} \sup_{\delta
  \in[-\delta_0,\delta_0]}  \bE_{N,{\bf h}_{N}^q}(e^{ \delta \widehat
  M_{N,i} })\leq e^{c_3\delta^2\, N}.
\ee
The inequality in \eqref{consst3} is sufficient to derive \eqref{confi} with random variables $(\widehat M_{N,i})_{i=1}^N$
instead of $(M_{N,i})_{i=1}^N$. Then, we recover \eqref{confi} by showing  that $\bE_{N,{\bf h}_N^q}(M_{N,i})$ is bounded by some constant  uniformly in $q\in [q_1,q_2]$, $N\geq 2$ and $i\in \{1,\dots,N\}$. The latter boundedness is obtained by writing, 
for all $1\leq i\leq N$ that,
\begin{align}\label{computbis}
2\valabs{\bE_{N,{\bf h}_N^q}\big[  M_{N,i}\big]} &= \valabs{\sum_{j=1}^i (-1)^j
  \Ll'(h^j_N)}\notag\\
& \le ||\Ll'||_{\infty,K} + \valabs{\sum_{j=1}^{\lfloor
    \frac{i}{2}\rfloor} \Ll'(h_N^{2j-1})-\Ll'(h_N^{2j})}\notag\\
&\le ||\Ll'||_{\infty,K} +C ||\Ll''||_{\infty,K}\le C_3\,,
\end{align}
with $\norme{f}_{\infty,K}=\sup_{x\in K} \valabs{f(x)}$ being the sup
norm on the compact $K$.

\section{Decay rate of large area probability}

\subsection{Proof of Proposition \ref{convunif} (Decay rate of large area probability)}\label{sec:mgf}
We will display here the proof of Proposition \ref{convunif} subject to Lemma \ref{convu}, Corollary \ref{graconv} and Lemmas \ref{diffeo}, \ref{exist}  that are stated and proven below.


In what follows we use the notations $\|{\bf x}\|=\max\{|x_1|,|x_2|\}$ and ${\bf x}\cdot {\bf y}=x_1y_1+x_2y_2$
and ${\text d}({\bf x},F)=\inf_{{\bf y}\in F} \| {\bf x}-{\bf y}\|$ for ${\bf x}=(x_1,x_2)\in \R^2$, ${\bf y}=(y_1,y_2)\in \R^2$
and $F\subset \R^2$. We also denote by $\partial F$ the boundary of $F\subset \R^2$.
 
\begin{lemma}\label{convu}
For all $(j_1,j_2)\in (\mathbb{N}\cup \{0\})^2$ and all compact and convex subsets $K$ in $\mathcal{D}$, there exists $c>0$ such that
\begin{equation}
\sup_{{\bf h} \in K}\,\bigl
|\partial^{(j_1,j_2)}\bigl[\ullamn\bigr]({\bf
  h})-\partial^{(j_1,j_2)}\Llam({\bf h})\bigr |\leq\tfrac{c}{n},\quad n\in \N.
\end{equation}
\end{lemma}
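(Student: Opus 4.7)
The plan is to recognize $\tfrac1n L_{\Lambda_n}(H)$ as a Riemann sum for $L_\Lambda(H)$ and then apply the standard $O(1/n)$ Riemann sum estimate componentwise to each partial derivative. First, observe that by the change of variable $i \mapsto n+1-i$ (equivalently, $x\mapsto 1-x$ in the integral),
\begin{equation*}
\tfrac{1}{n}L_{\Lambda_n}(H)=\tfrac{1}{n}\sum_{i=1}^{n}L\Big(\big(1-\tfrac{i}{n}\big)h_{0}+h_{1}\Big)=\tfrac{1}{n}\sum_{j=0}^{n-1}L\Big(\tfrac{j}{n}\,h_{0}+h_{1}\Big),
\end{equation*}
which is a left Riemann sum for $L_\Lambda(H)=\int_{0}^{1}L(xh_{0}+h_{1})\,dx$. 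Moreover, for any $H=(h_{0},h_{1})\in\mathcal{D}$ and $x\in[0,1]$, one has $xh_{0}+h_{1}=(1-x)h_{1}+x(h_{0}+h_{1})\in(-\beta/2,\beta/2)$ by convexity, so the compact set $\{(x,H)\in[0,1]\times K\}$ is mapped into a compact subset of $(-\beta/2,\beta/2)$, on which $L$ is analytic and all its derivatives are uniformly bounded.

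Next, since the sum is finite and $L$ is smooth, partial derivatives in $H$ commute with the Riemann sum and the integral. Setting
\begin{equation*}
\phi_{H}^{(j_{1},j_{2})}(x):=\partial_{h_{0}}^{j_{1}}\partial_{h_{1}}^{j_{2}}L(xh_{0}+h_{1})=x^{j_{1}}\,L^{(j_{1}+j_{2})}(xh_{0}+h_{1}),
\end{equation*}
the target quantity becomes exactly the left Riemann sum error of $\phi_{H}^{(j_{1},j_{2})}$:
\begin{equation*}
\partial^{(j_{1},j_{2})}\bigl[\tfrac{1}{n}L_{\Lambda_n}\bigr](H)-\partial^{(j_{1},j_{2})}L_\Lambda(H)=\tfrac{1}{n}\sum_{j=0}^{n-1}\phi_{H}^{(j_{1},j_{2})}\bigl(\tfrac{j}{n}\bigr)-\int_{0}^{1}\phi_{H}^{(j_{1},j_{2})}(x)\,dx.
\end{equation*}

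Finally, the classical Riemann sum estimate, obtained by writing the error as $\sum_{j=0}^{n-1}\int_{j/n}^{(j+1)/n}[\phi_{H}^{(j_{1},j_{2})}(j/n)-\phi_{H}^{(j_{1},j_{2})}(x)]\,dx$ and integrating the bound $|\phi_{H}^{(j_{1},j_{2})}(j/n)-\phi_{H}^{(j_{1},j_{2})}(x)|\le (x-j/n)\,\|\partial_{x}\phi_{H}^{(j_{1},j_{2})}\|_{\infty}$, yields
\begin{equation*}
\Bigl|\partial^{(j_{1},j_{2})}\bigl[\tfrac{1}{n}L_{\Lambda_n}\bigr](H)-\partial^{(j_{1},j_{2})}L_\Lambda(H)\Bigr|\le\tfrac{1}{2n}\,\sup_{x\in[0,1]}\bigl|\partial_{x}\phi_{H}^{(j_{1},j_{2})}(x)\bigr|.
\end{equation*}
Computing $\partial_{x}\phi_{H}^{(j_{1},j_{2})}$ gives a combination of the terms $j_{1}x^{j_{1}-1}L^{(j_{1}+j_{2})}(xh_{0}+h_{1})$ and $x^{j_{1}}h_{0}L^{(j_{1}+j_{2}+1)}(xh_{0}+h_{1})$; by the uniform bound on $L$ and its derivatives discussed above and the compactness of $K$, the supremum over $H\in K$ and $x\in[0,1]$ is finite. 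Taking $c$ to be any bound on $\tfrac{1}{2}\sup_{H\in K,x\in[0,1]}|\partial_{x}\phi_{H}^{(j_{1},j_{2})}(x)|$ proves the claim. The only point requiring care is the uniform control of the derivatives of $L$, which follows from analyticity on $(-\beta/2,\beta/2)$; no serious obstacle is anticipated.
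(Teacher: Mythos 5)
Your proof is correct and takes essentially the same approach as the paper's, which simply says: differentiate inside the integral and use the error estimate for the Riemann sum of $x\mapsto\partial^{(j_1,j_2)}_{h_0,h_1}L(xh_0+h_1)$. You have merely filled in the routine details (reindexing to a left Riemann sum, the convexity observation that $xh_0+h_1$ stays in a compact subset of $(-\beta/2,\beta/2)$ uniformly for $H\in K$, and the explicit $\tfrac{1}{2n}\|\partial_x\phi\|_\infty$ bound).
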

\begin{proof}
For all $(j_1,j_2)\in\mathbb{N}^2$, we first differentiate inside the integral
\begin{equation}
\partial^{(j_1,j_2)}\Llam({\bf h})=\int_0^1 \partial^{(j_1,j_2)}_{h_0,h_1}\Ll(xh_0+h_1)dx.
\end{equation}
Then, by using the error estimate for the Riemann sum of $x\mapsto\partial^{(j_1,j_2)}_{h_0,h_1}\Ll(xh_0+h_1)$, we obtain the result.
\end{proof}

By applying Lemma \ref{convu} for $(j_1,j_2)=(0,1)$ and $(j_1,j_2)=(1,0)$, we immediately obtain

\begin{corollary}\label{graconv}
For all compact and convex subsets $K$ in $\mathcal{D}$, there exist a $c>0$ such that
\begin{equation}
\sup_{{\bf h}\in K}\,\bigl\|\nabla\bigl[\ullamn\bigr]({\bf h})-\nabla \Llam({\bf h})\bigr\|\leq\tfrac{c}{n},\quad n\in \N.
\end{equation}
\end{corollary}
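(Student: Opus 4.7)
The plan is entirely straightforward: Corollary \ref{graconv} is a direct consequence of Lemma \ref{convu} applied twice, once for each of the two partial derivatives that make up the gradient. There is no substantive mathematical obstacle; the only thing to check is that the two resulting constants can be merged into a single $c$ that works for the max-norm bound.

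More explicitly, I would proceed as follows. First, I would observe that by the very definition of the gradient,
\begin{equation*}
\nabla\bigl[\tfrac{1}{n}L_{\Lambda_n}\bigr](H)=\Bigl(\partial^{(1,0)}\bigl[\tfrac{1}{n}L_{\Lambda_n}\bigr](H),\,\partial^{(0,1)}\bigl[\tfrac{1}{n}L_{\Lambda_n}\bigr](H)\Bigr),
\end{equation*}
and analogously for $\nabla L_\Lambda(H)$, so that the $j$-th component of the difference $\nabla[\tfrac{1}{n}L_{\Lambda_n}](H)-\nabla L_\Lambda(H)$ is precisely the quantity $\partial^{(j_1,j_2)}[\tfrac{1}{n}L_{\Lambda_n}](H)-\partial^{(j_1,j_2)}L_\Lambda(H)$ controlled by Lemma \ref{convu} when $(j_1,j_2)=(1,0)$ or $(j_1,j_2)=(0,1)$.

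Second, I would apply Lemma \ref{convu} to the compact convex set $K\subset\mathcal{D}$ with these two choices of multi-index to extract constants $c_1,c_2>0$ such that, for every $H\in K$ and every $n\in\N$,
\begin{equation*}
\bigl|\partial^{(1,0)}\bigl[\tfrac{1}{n}L_{\Lambda_n}\bigr](H)-\partial^{(1,0)}L_\Lambda(H)\bigr|\leq\tfrac{c_1}{n},\quad \bigl|\partial^{(0,1)}\bigl[\tfrac{1}{n}L_{\Lambda_n}\bigr](H)-\partial^{(0,1)}L_\Lambda(H)\bigr|\leq\tfrac{c_2}{n}.
\end{equation*}

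Finally, setting $c:=\max(c_1,c_2)$ and invoking the definition of the max norm $\|(x,y)\|=\max(|x|,|y|)$ adopted just before Lemma \ref{convu}, one obtains the uniform bound $\sup_{H\in K}\|\nabla[\tfrac{1}{n}L_{\Lambda_n}](H)-\nabla L_\Lambda(H)\|\leq c/n$ for every $n\in\N$. Since every step is a one-line deduction from the preceding lemma, the only place any genuine work occurred was inside Lemma \ref{convu}, where the Riemann-sum error estimate for the smooth integrand $x\mapsto\partial^{(j_1,j_2)}_{h_0,h_1}L(xh_0+h_1)$ produces the $1/n$ rate.
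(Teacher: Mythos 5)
Your proposal is correct and is exactly the argument the paper intends: the corollary is obtained by applying Lemma \ref{convu} with $(j_1,j_2)=(1,0)$ and $(j_1,j_2)=(0,1)$ and taking the larger of the two constants for the max norm. No further comment is needed.
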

For $\eta>0$, we let $K_\eta$ be the compact and convex subset of $\mathcal{D}$ defined as
\begin{equation}
K_\eta:=\Bigl\{{\bf h}=(h_0,h_1)\in\mathbb{R}^2\colon h_1\in\bigl[-\tfrac{\beta}{2}+\eta,\tfrac{\beta}{2}-\eta\bigr],\ h_0+h_1\in\bigl[-\tfrac{\beta}{2}+\eta,\tfrac{\beta}{2}-\eta\bigr]\Bigr\}.
\end{equation}

\begin{lemma}\label{diffeo}
The function $\nabla \Llam\colon\mathcal{D}\mapsto\mathbb{R}^2$ defined as
\begin{align}
\nonumber \nabla \Llam({\bf h})&=(\partial_{h_0}\Llam,\partial_{h_1}\Llam)({\bf h})\\
&=\Bigl(\int_0^1x\Ll'(xh_0+h_1)dx,\ \int_0^1\Ll'(xh_0+h_1)dx\Bigr).
\end{align}
is a $\mathcal{C}^1$ diffeomorphism. Moreover, for all $M>0$ there exists a $\eta>0$ such that $\|\nabla \Llam({\bf h})\|>M$ for ${\bf h}\in\mathcal{D}\setminus K_\eta$. 
\end{lemma}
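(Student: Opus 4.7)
The plan is to prove that $\nabla L_\Lambda$ is a $\mathcal{C}^1$ local diffeomorphism from $\mathcal{D}$ onto an open subset of $\mathbb{R}^2$, to prove the quantitative blow-up estimate near $\partial\mathcal{D}$, and to combine these with the boundedness of $\mathcal{D}$ to upgrade to a global $\mathcal{C}^1$ diffeomorphism onto $\mathbb{R}^2$.

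\emph{Regularity and strict convexity.} First I would observe that $L\in\mathcal{C}^\infty(-\beta/2,\beta/2)$ with all derivatives locally bounded, so differentiation under the integral (justified by dominated convergence on any compact $K_\eta\subset\mathcal{D}$) shows $L_\Lambda\in\mathcal{C}^\infty(\mathcal{D})$. Computing
\[
\Hess L_\Lambda(H) = \int_0^1 L''(xh_0+h_1)\begin{pmatrix} x^2 & x \\ x & 1 \end{pmatrix} dx,
\]
one checks that for $(\alpha,\gamma)\neq(0,0)$,
\[
(\alpha,\gamma)\,\Hess L_\Lambda(H)\,(\alpha,\gamma)^T = \int_0^1 L''(xh_0+h_1)(\alpha x + \gamma)^2\, dx > 0,
\]
since $L''>0$ (the law $\mathbf{P}_\beta$ being non-degenerate) and $(\alpha x+\gamma)^2$ vanishes at most at one point in $[0,1]$. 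Thus $L_\Lambda$ is strictly convex, which both forces the injectivity of $\nabla L_\Lambda$ and, via the inverse function theorem, shows it is a local $\mathcal{C}^1$ diffeomorphism.

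\emph{Boundary blow-up.} Note that $\mathcal{D}$ is bounded, since $|h_0|\leq|h_0+h_1|+|h_1|<\beta$, so a sequence $H_n\in\mathcal{D}$ with $H_n\notin K_\eta$ for every $\eta>0$ has, after extraction, $h_1^n\to a^*$ and $h_0^n+h_1^n\to b^*$ with $(a^*,b^*)\in[-\beta/2,\beta/2]^2$ and $\{a^*,b^*\}\cap\{\pm\beta/2\}\neq\emptyset$. For $h_0\neq 0$, the substitution $u=xh_0+h_1$ gives the identities
\[
\partial_{h_1}L_\Lambda(H)=\frac{L(h_0+h_1)-L(h_1)}{h_0},\qquad \partial_{h_0}L_\Lambda(H)=\frac{L(h_0+h_1)-L_\Lambda(H)}{h_0}.
\]
I would exploit two properties of $L$: (i) $L(h)\to+\infty$ logarithmically as $|h|\to\beta/2$ (since $\mathbf{P}_\beta$ has geometric tails of rate $\beta/2$), and (ii) this singularity being integrable, $L_\Lambda$ extends continuously and finitely to $\overline{\mathcal{D}}$. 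A case analysis then follows. If exactly one of $a^*,b^*$ lies in $\{\pm\beta/2\}$, then $h_0^n\to b^*-a^*\neq 0$ and exactly one of $L(h_1^n),L(h_0^n+h_1^n)$ tends to $+\infty$, making $|\partial_{h_1}L_\Lambda(H_n)|\to+\infty$. If both lie in $\{\pm\beta/2\}$ with $a^*\neq b^*$, then $h_0^n\to\pm\beta\neq 0$ and $L_\Lambda(H_n)$ stays bounded by (ii), while $L(h_0^n+h_1^n)\to+\infty$, so $|\partial_{h_0}L_\Lambda(H_n)|\to+\infty$. Finally, if $a^*=b^*\in\{\pm\beta/2\}$, then $xh_0^n+h_1^n\to a^*$ uniformly in $x\in[0,1]$, and monotonicity of $L'$ yields that $L'(xh_0^n+h_1^n)$ is eventually of arbitrarily large absolute value uniformly in $x$, so $|\partial_{h_1}L_\Lambda(H_n)|=\left|\int_0^1 L'(xh_0^n+h_1^n)\,dx\right|\to+\infty$.

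\emph{Global diffeomorphism.} The blow-up combined with boundedness of $\mathcal{D}$ makes $\nabla L_\Lambda$ proper: for any compact $B\subset\mathbb{R}^2$ the preimage $(\nabla L_\Lambda)^{-1}(B)$ lies in some $K_\eta$, hence is compact in $\mathcal{D}$. Therefore $\nabla L_\Lambda(\mathcal{D})$ is both open (local diffeomorphism) and closed (properness) in the connected space $\mathbb{R}^2$, so equals $\mathbb{R}^2$; injectivity from the first part then upgrades $\nabla L_\Lambda$ to a global $\mathcal{C}^1$ diffeomorphism. The main obstacle is the case analysis for the boundary blow-up, and in particular the degenerate case $a^*=b^*\in\{\pm\beta/2\}$ (where $h_0\to 0$ makes the substitution formulas break down, forcing a direct monotonicity argument on $L'$), together with the subcase where both $L(h_1^n)$ and $L(h_0^n+h_1^n)$ diverge and possible cancellation in the $\partial_{h_1}$ formula obliges one to switch to the $\partial_{h_0}$ formula and rely on finiteness of $L_\Lambda$ at the boundary.
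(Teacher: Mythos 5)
Your proof is correct (modulo one small imprecision noted below) and takes a genuinely different route from the paper for global bijectivity. The paper argues fiber-by-fiber: for each target second coordinate $b$, the equation $F_2(h_0,h_1)=b$ is solved for $h_0=\tilde h_0(h_1,b)$ via the implicit function theorem, and the remaining one-variable map $\psi_b(h_1):=F_1(\tilde h_0(h_1,b),h_1)$ is shown to be strictly decreasing (a 1D Cauchy--Schwarz computation) and to diverge at $(\mp\beta/2)^\pm$, which is where the boundary blow-up enters; for that blow-up the paper declares the argument "long and tedious" and provides only a picture-based heuristic. You instead rewrite the Hessian quadratic form as $\int_0^1 L''(xh_0+h_1)(\alpha x+\gamma)^2\,dx>0$ (equivalent to the paper's Cauchy--Schwarz step), obtaining strict convexity of $L_\Lambda$ and hence injectivity of $\nabla L_\Lambda$ in one line, and then deduce surjectivity by a Hadamard-type argument (local diffeomorphism $+$ properness $\Rightarrow$ image open and closed in the connected space $\mathbb{R}^2$). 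Your injectivity step is slicker, and you actually prove the blow-up rigorously through the closed-form partials $\partial_{h_1}L_\Lambda=\frac{L(h_0+h_1)-L(h_1)}{h_0}$, $\partial_{h_0}L_\Lambda=\frac{L(h_0+h_1)-L_\Lambda(H)}{h_0}$ and a three-case analysis over which portion of $\partial\mathcal{D}$ is approached — a genuine improvement over the paper's heuristic. The one imprecision: the claim that "$L_\Lambda$ extends continuously and finitely to $\overline{\mathcal{D}}$" fails at the two corners $(0,\pm\beta/2)$, where the interval of integration $[h_1,h_0+h_1]$ degenerates to a point at which $L$ blows up; you invoke this claim only in your second case, where $h_0^n\to b^*-a^*=\pm\beta\neq 0$ keeps the interval nondegenerate, so the argument survives, but the statement should be restricted to the region $\{|h_0|\geq\delta\}$ for a fixed $\delta>0$.
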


\bpr
The fact that $h\mapsto \Ll'(h)$ is $\cC^1$ and that $\Ll''(h)$ is strictly positive on $(-\tfrac{\beta}{2},\tfrac{\beta}{2})$ ensures that $\nabla \Llam$ is $\cC^1$  and that
its Jacobian determinant that takes value
\begin{equation}
{\textstyle J_{\bf h}\nabla \Llam= \int_0^1x^2\Ll''(xh_0+h_1)dx \int_0^1 \Ll''(xh_0+h_1)dx-[\int_0^1x \Ll''(xh_0+h_1)dx]^2}
\end{equation}
is, by Cauchy Schwartz inequality, strictly positive. Thus, the proof that $\nabla \Llam$ is a $C^1$ diffeomorphism from $\cD$ to $\R^2$ will be complete once we show that $\nabla \Llam$ is a bijection from $\cD$ to $\R^2$.

At this stage we note that for each ${\bf y}\in \R^2$ the function 

\begin{equation}\label{tfonc}
T_{\bf y}: {\bf h} \to \Llam({\bf h})-{\bf y}\cdot{\bf h }
\end{equation}

is strictly convex and tends to $\infty$ as ${\text d}({\bf h},\partial \cD)\to 0$. Therefore, $T_{\bf y}$ admits a unique minimum on $\cD$ at 
$\tilde {\bf h}({\bf y})$ that is also the unique solution of $\nabla \Llam({\bf h})={\bf y}$. Thus, $\nabla \Llam$ is a bijection from $\cD$ to $\R^2$. 

We complete the proof of this Lemma by assuming that there exists an $M_0>0$ and a sequence $({\bf h}_n)_{n=0}^\infty$ in $\cD$
so that ${\text d}({\bf h}_n, \partial \cD)\to 0$ as $n\to \infty$ and $\big\| \nabla \Llam({\bf h}_n)\big\|\leq M_0 $. Then, set
${\bf y}_n=\nabla \Llam({\bf h}_n)$ and recall that $ {\bf h}_n$ is the minimum of $T_{{\bf y}_n}$ for all $n\in \N$. However,
$T_{{\bf y}_n}(0,0)=0$ and consequently $T_{{\bf y}_n}({{\bf h}_n})\leq 0$ for all $n\in \N$ and then
$\Llam({{\bf h}_n})\leq {{\bf y}_n}\cdot {{\bf h}_n}$ which brings a contradiction because $\lim_{n\to \infty} \Llam({{\bf h}_n})=\infty$ (since ${\text d}({\bf h}_n, \partial \cD)\to 0$) whereas ${{\bf y}_n}\cdot {{\bf h}_n}$ is smaller than $M_0$ times the diameter of $\cD$.

\epr

\begin{lemma}\label{exist}
For $n\in \N\setminus\{0,1\}$, the function $\nabla \big[\tfrac{1}{n}\Ll_{\Lambda_n}\big]\colon \mathcal{D}_n\mapsto\mathbb{R}^2$ defined as
\begin{align}
\nabla [\tfrac{1}{n}\Ll_{\Lambda_n}\bigr]({\bf h})&=(\partial_{h_0}[\tfrac{1}{n} \Ll_{\Lambda_n},\partial_{h_1}\Ll_{\Lambda_n})({\bf h})\\
&=\bigg(\tfrac{1}{n}\sum_{i=0}^{n-1}\tfrac{i}{n}\Ll'(\tfrac{i}{n}h_0+h_1),\ \tfrac{1}{n}\sum_{i=0}^{n-1}\Ll'(\tfrac{i}{n}h_0+h_1)\bigg)
\end{align}
is a $\mathcal{C}^1$ diffeomorphism. Moreover, for all $M>0$ there exists a $\eta>0$ and a $n_0\in \N$ so that  
$\big\|\nabla [\tfrac{1}{n}\Ll_{\Lambda_n}\bigr]({\bf h})\big\|> M$ for  $n\geq n_0$ and  ${\bf h}\in \cD_n\setminus K_\eta$. 
\end{lemma}

\bpr
The first part of the proof, i.e., showing that $\nabla [\tfrac{1}{n}\Ll_{\Lambda_n}\bigr]$ is a $\cC^1$ diffeomorphism, 
is similar to that of Lemma \ref{diffeo} above. For the second part of the Lemma, we first note that 
$\lim_{\eta\to 0^+} \min\{\Llam({\bf h})\colon\, {\bf h}\in \partial K_\eta\}=\infty$. Then, for a given $M>0$ we can pick 
$\eta_0>0$ so that $\Llam$ remains larger than $2M$ on $\partial K_{\eta_0}$. Moreover, Lemma \ref{convu} ensures that 
$\frac{1}{n} \Ll_{\Lambda_n}$ converges to $\Llam$ uniformly on $K_{\eta_0}$ and therefore, there exists $n_0\in \N$ such that 
for all $n\geq n_0$, $\frac1n \Ll_{\Lambda_n}$ remains strictly larger than $M$ on $\partial K_{\eta_0}$. Consider ${\bf h}\in \cD_n\setminus K_\eta$ and let $t\in (0,1)$ be the unique solution of $t {\bf h}\in \partial K_{\eta_0}$. By convexity and since 
$\frac1n \Ll_{\Lambda_n}(0,0)=0$ we claim that $\frac1n \Ll_{\Lambda_n}({\bf h})\geq  \frac1n \Ll_{\Lambda_n}(t {\bf h}) >M$ which completes the proof.

\epr

\begin{remark}\label{rem:relhzerohuncontinu}
As in the proof of Lemma \ref{diffeo} above  we  denote by $\tilde
{\bf h}:=(\tilde { h}_0,\tilde { h}_1)$ the inverse function of
$\nabla \Llam$. Since $\Ll$ is an even function, we easily obtain,
for instance by observing that $T_{(q,0)}(h_0,h_1) = T_{(q,0)}(h_0,-h_0-h_1)$, that $\tilde{h}_0(q,0)=-2\tilde{h}_1(q,0)>0$ for all $q>0$.
We will also denote by ${\bf h}_n^q:=({ h}_{n,0}^q,{ h}_{n,1}^q)$ the unique solution of 
$\nabla [\tfrac{1}{n}\Ll_{\Lambda_n}\bigr]({\bf h})=(q,0)$ for all $n\geq 2$ and $q>0$. Again the fact that $\Ll$ is even ensures that 
${ h}_{n,0}^q (1-\tfrac{1}{n})=-2 { h}_{n,1}^q >0$.
\end{remark}

At this stage, we have enough tools to prove Proposition \ref{convunif}.
\begin{proof}[Proof of Proposition \ref{convunif}]
Pick $q\in[q_1,q_2]$, $n\in\mathbb{N}$ and note that
\begin{equation}\label{ttg}
\bigl|\bigl[\tfrac{1}{n}\Ll_{\Lambda_n}({\bf h}_n^q)-h_{n,0}^q\,q\bigr]-\bigl[\Ll_{\Lambda}(\tilde {\bf h}(q,0))-\tilde h_0(q,0)\,q\bigr]\bigr|\leq A+B+C
\end{equation}
with
\begin{align}\label{UVW}
A&=\bigl|\tfrac{1}{n}\Ll_{\Lambda_n}({\bf h}_n^q)-\Ll_{\Lambda}({\bf h}_n^q)\bigr|,\quad B=\bigl|\Ll_{\Lambda}({\bf h}_n^q)-\Ll_{\Lambda}(\tilde {\bf h}(q,0)) \bigr|,\quad C=q\,\bigl|h_{n,0}^q-\tilde h_0(q,0)\bigr|.
\end{align}
From Lemma \ref{exist}, we know that there exists an $\eta>0$ and a $n_0\in\mathbb{N}$ such that ${\bf h}_n^q\in K_\eta$ for all $q\in[q_1,q_2]$ and $n\geq n_0$. By using Lemma \ref{convu} with $(j_1,j_2)=(0,0)$ and $K=K_\eta$ we can claim that there exists a $c_1>0$ satisfying $A\leq\tfrac{c_1}{n}$ for $n\geq n_0$ and $q\in[q_1,q_2]$. The $B$ quantity is dealt with by applying Corollary \ref{graconv} with $K=K_\eta$, that is there exists a $c_2>0$ such that 
\begin{equation}\label{gfd1}
\sup_{x\in K_\eta}\bigl\|\nabla\bigl[\tfrac{1}{n}\Ll_{\Lambda_n}\bigr](x)-\nabla \Llam(x)\bigr\|\leq\frac{c_2}{n},\quad n\geq n_0.
\end{equation}
Therefore, for $q\in[q_1,q_2]$ and $n\geq n_0$ we can write
\begin{align}
\nabla\bigl[\tfrac{1}{n}\Ll_{\Lambda_n}\bigr]({\bf h}_n^q)&=\nabla \Ll_{\Lambda}({\bf h}_n^q)+\epsilon_{n,q},\\
\nonumber(q,0)&=\nabla \Ll_{\Lambda}({\bf h}_n^q)+\epsilon_{n,q}
\end{align}
with $||\gep_{n,q}||\leq\tfrac{c_2}{n}$. Therefore, by Lemma \ref{diffeo}, we can claim that ${\bf h}_n^q=\tilde {\bf h}\bigl((q,0)-\gep_{n,q}\bigr)$. We set
\begin{equation*}
Q_n=\bigl\{(x,y)\in\mathbb{R}^2\colon\,{\text d}\bigl((x,y),[q_1,q_2]\times\{0\}\bigr)\leq\tfrac{c_2}{n}\bigr\},
\end{equation*}
so that there exists a $n_1\geq n_0$ such that $Q_{n_1}$ is a convex subset of $\mathcal{D}$ and since ${\bf x}\mapsto\tilde {\bf h}({\bf x})$ is $\mathcal{C}^1$ on $\mathcal{D}$ we can claim that $\tilde {\bf h}$ is Lipschitz on $Q_{n_1}$. Thus, there exists a $c_3>0$ such that
\begin{equation}\label{ineg}
||{\bf h}_n^q-\tilde {\bf h}((q,0))||\leq c_3\,||\epsilon_{n,q}||\leq\tfrac{c_2 c_3}{n},\quad q\in[q_1,q_2],n\geq n_1,
\end{equation}
and this proves \eqref{eq:convuniff1}. Moreover
\begin{equation}\label{C}
C\leq q_2\,||{\bf h}_n^q-\tilde {\bf h}((q,0))||\leq\tfrac{q_2 c_2 c_3}{n},\quad q\in[q_1,q_2],n\geq n_1.
\end{equation}
Finally, since $\Llam$ is $\mathcal{C}^1$ on $\mathcal{D}$, there exists a $c_4>0$ such that $\Llam$ is Lipschitz with constant $c_4$ on $Q_{n_1}$. Thus,
\begin{equation}\label{B}
B\leq c_4||{\bf h}_n^q-\tilde {\bf h}((q,0))||\leq\tfrac{c_2 c_3 c_4}{n},\quad q\in[q_1,q_2],n\geq n_1.
\end{equation}
This completes the proof of Proposition \ref{convunif}.
\end{proof}

\section{Limit theorems for the joint distribution}\label{sec:lclt}
In Section \ref{pf:clt} below, we give a proof of Proposition \ref{lem:bHtilde} which estimates, uniformly in $q\in [q_1,q_2]\subset (0,\infty)$, the probability of the event  $\{\Lambda_n=(Y_n,V_n)=(nq,0)\}$  under the tilted law $\mathbf{P}_{n,{\bf h}_n^q}$ (recall \eqref{revt}). To that aim, we state and prove Proposition \ref{prop:lclt}, which gives a local central limit Theorem for $(Y_n,V_n)$ under $\mathbf{P}_{n,{\bf h}_n^q}$.
In Section \ref{sec:wulff}, we prove Proposition \ref{prop:comppos} which allows us to
bound from below the probability that, under $\mathbf{P}_\beta$ and conditioned on both $V_n=0$ and $Y_n=nq$ the random walk $V$ remains strictly positive.

\subsection{Proof of Proposition \ref{lem:bHtilde}}\label{pf:clt}  

We display the proof of Proposition \ref{lem:bHtilde} which turns out to be a straightforward consequence of  Proposition 
\ref{prop:lclt} below. The latter Proposition will be proven at the end of the Section.

\begin{proof}
Recall (\ref{eq:Yn}--\ref{revt}) and 
for any ${\bf h}\in\mathcal{D}$, define the matrix
\begin{equation}
\mathbf{B}({\bf h}):=\Hess \Llam({\bf h})
\end{equation}
and let $\Theta$ be the Gaussian random vector with zero mean and covariance matrix $\mathbf{B}({\bf h})$. We denote the density of $\Theta$ by
\begin{equation}
f_{\bf h}(X)=\frac{1}{2\pi\sqrt{\det\mathbf{B}({\bf h})}}\exp\Bigl(-\tfrac{1}{2}\langle\mathbf{B}({\bf h})^{-1}X,X\rangle\Bigr),\quad X\in \R^2,
\end{equation}
and its characteristic function by 
\begin{equation}\label{phibarr}
\bar{\Phi}_{{\bf h}}(T)=\exp\bigl(-\tfrac{1}{2}\langle\mathbf{B}({\bf h})T,T\rangle\bigr),\quad T\in \R^2.
\end{equation}

Consider now the case $(Y_N,V_N)=(Nq_{N,L},0)$ as in Section \ref{pr:prop1} and recall that $q_{N,L}\in\bigl[\frac{1}{2a_2^2},\frac{1}{a_1^2}\bigr]$. We will show that the local central limit theorem below is valid uniformly in $q$ in some compact subsets.
\begin{proposition}\label{prop:lclt}
For $[q_1,q_2]\subset\mathbb{R}$ we have
\begin{equation}\label{eq:lclt}
\tau_N:=\sup_{q\in[q_1,q_2]}\sup_{x,y\in\mathbb{Z}}\,\Bigl|N^2\mathbf{P}_{N,{\bf h}_N^q}\bigl(NY_N=N^2q+x,V_N=y\bigr)-f_{\tilde {\bf h}(q,0)}\Bigl(\tfrac{x}{N^{3/2}},\tfrac{y}{\sqrt{N}}\Bigr)\Bigr|\to 0,
\end{equation}
as $N\to\infty$. 
\end{proposition}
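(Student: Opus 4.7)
\textbf{Proof plan for Proposition \ref{prop:lclt}.} The approach is a classical Fourier inversion argument for the sum of the independent vectors $\xi_j = ((N-j)v_j, v_j)$, $j=1,\ldots,N$, taken under the tilted law $\mathbf{P}_{N,H_N^q}$. Starting from the inversion formula on $\mathbb{Z}^2$,
\begin{equation*}
\mathbf{P}_{N,H_N^q}(NY_N = N^2q + x, V_N = y) = \frac{1}{(2\pi)^2}\int_{[-\pi,\pi]^2} e^{-it(N^2q+x)-isy}\varphi_N(t,s)\,dt\,ds,
\end{equation*}
I would factorize
\begin{equation*}
\varphi_N(t,s) = \prod_{j=1}^N\exp\bigl(L(h_N^j+i\alpha_j)-L(h_N^j)\bigr),\quad \alpha_j=(N-j)t+s,\ h_N^j=(1-j/N)h_{N,0}^q+h_{N,1}^q,
\end{equation*}
and invoke the defining equations of $H_N^q$, namely $\sum_j(N-j)L'(h_N^j)=N^2q$ and $\sum_j L'(h_N^j)=0$, to cancel the first-order terms in the Taylor expansion of $L(h_N^j+i\alpha_j)$ after multiplying by $e^{-itN^2 q}$.

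Next, I would make the substitution $t=uN^{-3/2}$, $s=wN^{-1/2}$, matching the natural orders $\mathrm{Var}(NY_N)\asymp N^3$ and $\mathrm{Var}(V_N)\asymp N$, and set $\psi_N(u,w)=e^{-itN^2q}\varphi_N(t,s)$. Since $|e^{-i\cdot}|=1$, the triangle inequality reduces the convergence $\tau_N\to 0$ to showing that
\begin{equation*}
\int_{D_N}\bigl|\psi_N(u,w)-\bar\Phi_{\tilde H(q,0)}(u,w)\bigr|\,du\,dw \longrightarrow 0
\end{equation*}
uniformly in $q\in[q_1,q_2]$, where $D_N=[-\pi N^{3/2},\pi N^{3/2}]\times[-\pi\sqrt{N},\pi\sqrt{N}]$ (the tail $\int_{D_N^c}\bar\Phi_{\tilde H(q,0)}$ being negligible by integrability). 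I would then partition $D_N$ into a compact core $\{|u|+|w|\le R\}$, an intermediate zone $\{R\le|u|+|w|\le\eta\sqrt N\}$ and a large-scale tail, and treat each separately.

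On the compact core, a third-order Taylor expansion of $L$ around $h_N^j$ yields $\log\psi_N(u,w) = -\tfrac12\sum_j\alpha_j^2L''(h_N^j)+O(N^{-1/2})$, with $\alpha_j = N^{-1/2}[u(1-j/N)+w]$. The main sum is the Riemann approximation of $\int_0^1[ux+w]^2L''(xh_0+h_1)\,dx = \langle\mathbf{B}(H)(u,w),(u,w)\rangle$ where $H=\tilde H(q,0)$. Proposition \ref{convunif} lets me replace $H_N^q$ by $\tilde H(q,0)$ with error $O(1/N)$, while Lemma \ref{comp} keeps the tilts $h_N^j$ in a single compact subset of $(-\beta/2,\beta/2)$ uniformly in $q\in[q_1,q_2]$, so the convergence $\psi_N\to\bar\Phi_{\tilde H(q,0)}$ is uniform in $q$ and dominated convergence closes this region. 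On the intermediate zone, because $|\alpha_j|$ stays small and $L''\ge\lambda>0$ on the compact set of Lemma \ref{comp}, the Taylor expansion upgrades to $|\psi_N(u,w)|\le e^{-c(u^2+w^2)}$, whose contribution is integrable and vanishes as $R\to\infty$.

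The main obstacle is the large-scale tail, where $(t,s)\in[-\pi,\pi]^2$ exits a fixed neighbourhood of the origin. There I would exploit that under $\mathbf{P}_{N,H_N^q}$ each $v_j$ is integer-valued with strictly positive mass on every element of $\mathbb{Z}$, so $|\mathbf{E}_{N,H_N^q}[e^{i\theta v_j}]|\le 1-\delta<1$ uniformly over $h_N^j$ in the compact set of Lemma \ref{comp} and over $\theta$ in any compact subset of $(-\pi,\pi)\setminus\{0\}$. A standard equidistribution argument for the arithmetic progression $\theta_j=(N-j)t+s$ (with a case analysis on whether $|t|$ or $|s|$ dominates, and using that $(N-j)t+s$ either stays near the non-zero value $s$ or else sweeps a macroscopic interval mod $2\pi$) shows that a positive fraction of the residues $\theta_j\bmod 2\pi$ avoid a neighbourhood of zero, yielding $|\varphi_N(t,s)|\le\rho^{cN}$ with $\rho<1$, uniformly in $q$. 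Multiplying by $|D_N|=O(N^2)$ still gives a quantity decaying exponentially in $N$. Assembling the three regions and using the uniformity furnished by Proposition \ref{convunif} and Lemma \ref{comp} gives the claimed uniform convergence.
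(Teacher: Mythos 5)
Your plan is correct and coincides in essence with the paper's proof, which follows Dobrushin--Hryniv \cite{DH96}: both rest on Fourier inversion for the integer-valued pair $(NY_N,V_N)$, the product formula for the tilted characteristic function, and a decomposition of the integration domain into a compact core (Taylor expansion converging to the Gaussian), an intermediate zone controlled by the uniform Gaussian-type bound $|\phi_h(t)|\leq\exp(-\alpha^2t^2L''(h))$, and a far tail handled by showing that a positive fraction of the arguments $(N-j)t+s$ stay away from $2\pi\mathbb{Z}$, with all estimates made uniform in $q$ via Proposition \ref{convunif} and Lemma \ref{comp}. The only difference is presentational: the paper organizes this as the four regions $\mathcal{A}_1,\ldots,\mathcal{A}_4$ and delegates the tail equidistribution bound $\mathbf{N}_N(T)\geq\beta N$ to a page reference in \cite{DH96}, which you instead sketch directly.
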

By applying Proposition \ref{prop:lclt} with $x=y=0$, we obtain that
\begin{equation}\label{lhs}
\sup_{q\in[q_1,q_2]}\Bigl|N^2\mathbf{P}_{N,{\bf h}_N^q}(NY_N=N^2q,\,V_N=0)-f_{\tilde {\bf h}(q,0)}(0,0)\Bigr|\leq\tau_N\to0,
\end{equation}
and since the Hessian matrix ${\bf B}\big(\tilde {\bf h}(q,0)\big)$ is uniformly bounded in $q\in[q_1,q_2]$, we observe that there exists $C>0$ such that 
\begin{equation}
\tfrac{1}{CN^2}\leq\mathbf{P}_{N,{\bf h}_N^q}(NY_N=N^2q,\,V_N=0)\leq\tfrac{C}{N^2}\quad\text{for $N$ large enough}
\end{equation}
which completes the proof of Proposition \ref{lem:bHtilde}.
\end{proof}

\subsubsection{\bf Proof of Proposition \ref{prop:lclt}}
%
We follow closely the proof of Dobrushin and Hryniv in \cite{DH96}, making sure that the result holds uniformly in $q\in [q_1,q_2]$. From Lemma \ref{diffeo} and Lemma \ref{exist}, there exists $\eta>0$ such that both $\tilde {\bf h}(q,0)$ and ${\bf h}_N^q$ are in $K_\eta$ for all $q\in[q_1,q_2$] and for $N$ large enough. 

We let $\mathfrak{E}$ be the holomorphic function defined on  $\{z\in \mathbb{C} \colon \text{Re}(z)\in (-\beta/2,\beta/2) \}$
by $\mathfrak{E}(z)=\bE_\beta(e^{z U_1})$.
For any $h\in (-\beta/2,\beta/2)$ and $t\in \R$ we set
\begin{equation}\label{etui}
\phi_h(t):=\mathfrak{E}(h+it)/\mathfrak{E}(h).
\end{equation}

Let us state some properties of the function $\phi_h(t)$ that will be
used in the sequel (they are established
in \cite{DH96}). First of all, for any $h\in\mathcal{K}:=[-\beta/2+\eta, \beta/2-\eta]$ and $t\in\mathbb{R}$
\begin{equation}\label{eq:chabou1}
|\phi_h(t)|\leq\phi_h(0)=1.
\end{equation}
Secondly, for any $\delta\in(0,\pi)$, there exists a constant $C=C(\mathcal{K},\delta)>0$ such that for every $h\in\mathcal{K}$ and any $t\in[\delta,2\pi-\delta]$, we have
\begin{equation}\label{eq:chabou2}
|\phi_h(t)|\leq e^{-C}.
\end{equation}
And finally, there exists a constant $\alpha=\alpha(\mathcal{K})>0$ such that for all $h\in\mathcal{K}$ and any $t$, $|t|\leq\pi$, the following inequality holds
\begin{equation}\label{eq:chabou3}
|\phi_h(t)|\leq\exp(-\alpha^2t^2\Ll''(h)).
\end{equation}
For any $T=(t_0,t_1)\in\mathbb{R}^2$, let $\Phi_{N,{\bf h}_N^q}(T)$ be the characteristic function of the random vector $\Lambda_N=(Y_N,V_N)$. Let us rewrite it with the functions $\phi_h(t)$,
\begin{equation}\label{eq:chaLambda}
\Phi_{N,{\bf h}_N^q}(T)=\mathbf{E}_{N,{\bf h}_N^q}\bigl[e^{i\langle T,\Lambda_N\rangle}\bigr]=\prod_{j=1}^N\phi_{h_{j,N}}(t_{j,N}),
\end{equation}
where
\begin{equation}\label{eq:tjN}
h_{j,N}=(1-\tfrac{j}{N})h_{N,0}^q+h_{N,1}^q\quad\text{and}\quad t_{j,N}=(1-\tfrac{j}{N})t_0+t_1.
\end{equation}
Note that
\begin{equation}\label{chet}
\hat{\Phi}_{N,{\bf h}_N^q}(T)=\Phi_{N,{\bf h}_n^q}(N^{-1/2}T)\exp\Bigl(-\tfrac{i}{\sqrt{N}}\langle T,\mathbf{E}_{N,{\bf h}_N^q}(\Lambda_N)\rangle\Bigr)
\end{equation}
is the characteristic function of the centered random vector $\Lambda^\star_N:=\Lambda_N-\mathbf{E}_{N,{\bf h}_N^q}(\Lambda_N)$.

Let ${\bf v}_N=(\tfrac{x}{N^{3/2}},\tfrac{y}{\sqrt{N}})$. Using the well know inversion formula for the Fourier transform, we rewrite the left hand side of \eqref{eq:lclt}, i.e.,
\begin{equation}
R_N=N^2\mathbf{P}_{N,{\bf
    h}_N^q}\bigl(NY_N=N^2q+x,V_N=y\bigr)-f_{\tilde {\bf h}(q,0)}({\bf v}_N)
\end{equation}
in the form
\begin{equation}\label{eq:inverFour}
R_N=\tfrac{1}{(2\pi)^2}\int_\mathcal{A}\hat{\Phi}_{N,{\bf
    h}_N^q}(T)e^{-i\langle T,{\bf
    v}_N\rangle}dT-\tfrac{1}{(2\pi)^2}\int_{\mathbb{R}^2}\bar{\Phi}_{\tilde
  {\bf h}(q,0)}(T)e^{-i\langle T,{\bf v}_N\rangle}dT,
\end{equation}
where
\begin{equation}
\mathcal{A}=\{T=(t_0,t_1)\in\mathbb{R}^2\colon|t_0|\leq\pi N^{3/2},\,|t_1|\leq\pi\sqrt{N}\}.
\end{equation}
Following the proof in \cite{DH96} we bound the left hand side of \eqref{eq:inverFour} by the sum of four terms,
\begin{equation}
|R_N|\leq(2\pi)^{-2}(J_1^{(q)}+J_2^{(q)}+J_3^{(q)}+J_4^{(q)})
\end{equation}
where, for some positive constants $A$ and $\Delta$,
\begin{align}
J_1^{(q)}&=\int_{\mathcal{A}_1}\bigl|\hat{\Phi}_{N,{\bf h}_N^{q}}(T)-\bar{\Phi}_{\tilde {\bf h}(q,0)}(T)\bigr|dT,\quad \mathcal{A}_1=[-A,A]^2,\\
J_2^{(q)}&=\int_{\mathcal{A}_2}\bar{\Phi}_{\tilde {\bf h}(q,0)}(T)dT,\quad \mathcal{A}_2=\mathbb{R}^2\setminus \mathcal{A}_1,\\
J_3^{(q)}&=\int_{\mathcal{A}_3}\bigl|\hat{\Phi}_{N,{\bf h}_N^{q}}(T)\bigr|dT,\quad \mathcal{A}_3=\{T\in\mathbb{R}^2\colon|t_l|\leq\Delta\sqrt{N},\,l=0,1\}\setminus \mathcal{A}_1,\\
J_4^{(q)}&=\int_{\mathcal{A}_4}\bigl|\hat{\Phi}_{N,{\bf h}_N^{q}}(T)\bigr|dT,\quad \mathcal{A}_4=\mathcal{A}\setminus(\mathcal{A}_1\cup \mathcal{A}_3).
\end{align}

For an arbitrary $\epsilon>0$, Dobrushin and Hryniv proved that for a convenient choice of the constants $A=A(\epsilon)$ and $\Delta$, we have the bounds $J_i^{(q)}<\epsilon/4$ for $i=1,2,3,4$ for sufficiently large $N$. Therefore, the proof will be complete once we show that this assertion is also valid uniformly in $q\in[q_1,q_2]$. It remains to evaluate all $J_i^{(q)}$.

First, we bound $J_1^{(q)}$. For ${\bf h}\in\mathcal{D}_n$, define the matrix
\begin{equation}
\mathbf{B}_n({\bf h}):=\tfrac{1}{n}\Hess \Ll_{\Lambda_n}({\bf h}),\quad n\in\mathbb{N}.
\end{equation}
By Lemma \ref{convu} and Proposition \ref{convunif}, we obtain the relation
\begin{equation}\label{eq:matrixBN}
\mathbf{B}_N({\bf h}_N^{q})=\mathbf{B}(\tilde {\bf h}(q,0))+R'_N,
\end{equation}
with the bound $\valabs{R'_N} \le C_1(q_1,q_2) N^{-1}$  uniform in
$q\in[q_1,q_2]$. 

Recall that $\cK=[-\beta/2+\eta,\beta/2-\eta]$. Since $\mathfrak{E}$ is holomorphic on  $\{z\in \mathbb{C} \colon \text{Re}(z)\in (-\beta/2,\beta/2) \}$, for any $\eta>0$ there exists an $A'>0$ so that 
$\text{Re}(\mathfrak{E}(z))>0$ for $z\in \cK+i [-A',A']$ and therefore we can use a branch of the complex logarithm 
to extend the function $\mathfrak{L}$ (that equals $\log \mathfrak{E}$) to $\cK+i [-A',A']$. 
We observe that  ${\bf h}\in K_\eta$ and $T\in \tfrac12[-A',A']^2$ yield $(1-\tfrac{j}{n})h_0+h_1\in \cK$  and 
  $(1-\tfrac{j}{n})t_0+t_1\in [-A',A']$ for all  $j\in \{1,\dots,N\}$. Thus,  we can 
extend  $\Ll_{\Lambda_n}$ to $K_\eta \times \tfrac12 [-A',A']^2$ with the formula 
\begin{equation}\label{eq:LlambdaN}
{\textstyle \Ll_{\Lambda_n}({\bf h}+iT):=\sum_{j=1}^n \Ll\Bigl(\bigl(1-\tfrac{j}{n}\bigr)(h_0+it_0)+h_1+i t_1\Bigr).}
\end{equation}
Similarly, we extend   $\Ll_{\Lambda}$ to $K_\eta \times \tfrac12 [-A',A']^2$ and Lemma \ref{convu} can, without further difficulty, be extended to $K_\eta \times \tfrac12 [-A',A']^2$. In particular, any partial derivative
of order $3$ of $\frac{1}{n} \mathfrak{L}_{\Lambda_n}$ converges uniformly to its counterpart of $\mathfrak{L}_\Lambda$ on $K_\eta \times \tfrac12 [-A',A']^2$.   
Consequently, for $N$ large enough, we make sure that for $q\in [q_1,q_2]$ and for $T\in \cA_1$, 
we have ${\bf h}_{N}^q\in K_\eta$ and  $T/N\in \frac12 [-A',A']^2$ so that  we can consider the remainder
\be{remindert}
R''_N= \Ll_{\Lambda_N}({\bf h}_N^{q}+iN^{-1/2}T)-\Ll_{\Lambda_N}({\bf h}_N^{q})-\tfrac{i}{\sqrt{N}}\langle T,\mathbf{E}_{N,{\bf h}_N^{q}}(\Lambda_N)\rangle+\tfrac{1}{2}\langle\mathbf{B}_N({\bf h}_N^{q})T,T\rangle,
\ee
and apply a Taylor-Lagrange inequality to assert that  there exists  a constant $C(A,q_1,q_2)>0$ such that for $N$ large enough 
$|R'_N|\leq C(A,q_1,q_2)/\sqrt{N}$ uniformly in $q\in [q_1,q_2]$ and
$T\in \cA_1$. 



Therefore, we can use \eqref{phibarr}, \eqref{etui}, (\ref{eq:chaLambda}--\ref{chet}) and \eqref{eq:matrixBN} to get
\begin{equation}
\sup_{q\in[q_1,q_2],T\in \cA_1}\;\bigl|\hat{\Phi}_{N,{\bf
    h}_N^{q}}(T)-\bar{\Phi}_{\tilde {\bf h}(q,0)}(T)\bigr| = 
\sup_{q\in[q_1,q_2],T\in \cA_1}\valabs{e^{\frac12 R'_N \norme{T}^2 + R''_N} -1} 
\to 0\quad\text{as}\ N\to\infty.
\end{equation}
Hence, for every finite $A>0$, we obtain the convergence $J_1^{(q)}\to 0$ as $N\to\infty$ uniformly in $q\in[q_1,q_2]$.

Let $\underline{B}$ be such that $0<\underline{B}\leq {\bf B}(\tilde {\bf h}(q,0))$ for all $q\in[q_1,q_2]$. Hence, we can bound $J_2^{(q)}$ as follows
\begin{equation}
\sup_{q\in[q_1,q_2]}J_2^{(q)}\leq\int_{\mathcal{A}_2}e^{-\tfrac{1}{2}\langle\underline{B}T,T\rangle}dT\to0\quad\text{as}\ \ A\to\infty.
\end{equation} 

To estimate $J_3^{(q)}$ we fix any $T\in\mathcal{A}_3$ and put $\Delta=\pi/2$. Then all the numbers $t_{j,N}$ in \eqref{eq:tjN} satisfy the condition $|t_{j,N}|\leq \pi\sqrt{N}$, evaluating each factor in \eqref{eq:chaLambda} with the help of \eqref{eq:chabou3} and \eqref{eq:matrixBN} we obtain the bound
\begin{equation}
\bigl|\hat{\Phi}_{N,{\bf h}_N^{q}}(T)\bigr|\leq\exp(-\alpha^2\langle {\bf B}_N({\bf h}_N^{q})T,T\rangle)\leq C \exp(-\alpha^2 \langle {\bf B}(\tilde {\bf h}(q,0))T,T\rangle),
\end{equation}
for some constant $C>0$. As a result,
\begin{equation}
\sup_{q\in[q_1,q_2]}J_3^{(q)}=\sup_{q\in[q_1,q_2]}\int_{\mathcal{A}_3}\bigl|\hat{\Phi}_{N,{\bf h}_N^{q}}(T)\bigr|dT\leq C \int_{\mathcal{A}_2}\exp(-\alpha \langle\underline{B}T,T\rangle)dT\to0\quad\text{as}\ A\to\infty.
\end{equation}

To evaluate $J_4^{(q)}$ put
$\delta=\tfrac{1}{17\,  (2)^2}$ and
for any $T\in\mathcal{A}_4$ denote by $\mathbf{N}_N(T)$ the number of indexes $j=1,2,\ldots,N$ such that $\tau_{j,N}\notin\mathcal{O}_\delta:=\cup_{m\in\mathbb{Z}}[m-\delta,m+\delta]$, where
\begin{equation}
\tau_{j,N}:=\tfrac{1}{2\pi\sqrt{N}}t_{j,N}.
\end{equation}
Use \eqref{eq:chabou1} and \eqref{eq:chabou2} to estimate those factors in \eqref{eq:chaLambda} and we have
\begin{equation}
\bigl|\hat{\Phi}_{N,{\bf h}_N^{q}}(T)\bigr|=\prod_{j=1}^N\Bigl|\phi_{h_{j,N}}\Bigl(\tfrac{1}{\sqrt{N}}t_{j,N}\Bigr)\Bigr|\leq\exp(-C\mathbf{N}_N(T)).
\end{equation}
A lower bound of $\mathbf{N}_N(T)$ is given in \cite[p.~443]{DH96}: for all $T\in\mathcal{A}_4$ and $N$ large enough, there exists a constant $\kappa>0$ such that
$\mathbf{N}_N(T)\geq\kappa N$.
Then, uniformly in $q\in[q_1,q_2]$,
\begin{equation}
J_4^{(q)}=\int_{\mathcal{A}_4}\bigl|\hat{\Phi}_{N,{\bf h}_N^{q}}(T)\bigr|dT\leq(2\pi)^2 N^2 \exp(-C\kappa N)\to 0\quad\text{as}\;N\to\infty.
\end{equation}

\subsection{Proof of Proposition \ref{prop:comppos} (Unique excursion for large area)} \label{sec:wulff}

From now on, the letters $C,C',C_1,\ldots$ shall denote constants that do not depend on $N$ and on $q\in[q_1,q_2]\subset(0,\infty)$. In other words, all the bounds we are going to establish are uniform in $N\geq N_0$ and $q\in[q_1,q_2]$.

To begin with, we prove Lemma \ref{lem:ub:vi}  subject to  Lemmas \ref{lem:deflambda}
and \ref{lem:symetrie} below. Lemma \ref{lem:ub:vi} is crucial in the proof of Proposition \ref{prop:comppos}. It allows us indeed to bound from below, for any $j\in \N$, the probability 
that the random walk $V$,  conditioned on making a large area, is below $0$ at time $j$. Such a lower bound  was available in \cite{DH96} but only for $j$ of order $N$. Here, we deal with any $j\leq N$.
The first step of the proof is an upper bound on the moment generating function of the tilted random walk $V$.
\begin{lemma}\label{lem:deflambda}
There exist three positive constants $C',C_1,\lambda$ such that for every integer $j\leq N/2$, the following bound holds
\begin{equation}
\mathbf{E}_{N,{\bf h}_N^{q}}\bigl[e^{-\lambda V_j}\bigr]\leq C'e^{-C_1 j},\quad N\in\mathbb{N}.
\end{equation}
\end{lemma}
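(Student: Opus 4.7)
The plan is to exploit the product structure of the tilted measure. Starting from the identity $h_0 Y_n + h_1 V_n = \sum_{k=1}^n [(1-k/n) h_0 + h_1]\,v_k$ (the same manipulation that yields \eqref{eq:LlambdaN}), one sees that under $\mathbf{P}_{N,H_N^q}$ the increments $(v_k)_{k=1}^N$ are independent, with $v_k$ Cramer-tilted by $h_N^k := (1-k/N)\,h_{N,0}^q + h_{N,1}^q$. Consequently
\[
\mathbf{E}_{N,H_N^q}\bigl[e^{-\lambda V_j}\bigr] = \exp\Bigl(\sum_{k=1}^j \bigl[L(h_N^k - \lambda) - L(h_N^k)\bigr]\Bigr),
\]
and the lemma reduces to showing that this exponent is bounded above by $-C_1 j + O(1)$ for some fixed small $\lambda > 0$, uniformly in $j \leq N/2$, in $N$ large, and in $q \in [q_1, q_2]$.

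The crucial input is Remark \ref{rem:relhzerohuncontinu}, which gives $\tilde h_1(q,0) = -\tilde h_0(q,0)/2$ with $\tilde h_0(q,0) > 0$, combined with the quantitative rate $\|H_N^q - \tilde H(q,0)\| \leq c/N$ from Proposition \ref{convunif}. Together they yield
\[
h_N^k = \tilde h_0(q,0)\Bigl(\tfrac{1}{2} - \tfrac{k}{N}\Bigr) + O(1/N),
\]
uniformly in $k$ and in $q\in[q_1,q_2]$. Thus $h_N^k \geq 0$ for $k \leq N/2$ up to an $O(1/N)$ correction, which morally says that the walk is upwardly tilted on its first half: $V_j$ grows linearly in $j$ and $e^{-\lambda V_j}$ decays exponentially.

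To make this precise, I would use the second-order Taylor expansion $L(h-\lambda) - L(h) \leq -\lambda L'(h) + C_L \lambda^2$, valid for $\lambda$ small and $h$ in the compact set provided by Lemma \ref{comp} (on which $L''$ is bounded). Since $L$ is even (the distribution of $v_1$ under $\mathbf{P}_\beta$ is symmetric), $L'(0)=0$ and $L''(0)>0$, so $L'(u) \geq c_0 u$ on some $[0,\delta]$; on $[\delta, C]$ we have $L'(u) \geq L'(\delta) > 0$. A short calculation then gives, uniformly in $q\in[q_1,q_2]$ and for $j\leq N/2$,
\[
\sum_{k=1}^j L'(h_N^k) \;\geq\; c_1 \sum_{k=1}^j \Bigl(\tfrac{1}{2} - \tfrac{k}{N}\Bigr) - O(1) \;=\; c_1\Bigl(\tfrac{j}{2} - \tfrac{j(j+1)}{2N}\Bigr) - O(1) \;\geq\; c_2\, j - O(1),
\]
the last inequality using $j \leq N/2$. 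Choosing $\lambda$ small enough that $c_2\lambda > C_L \lambda^2$ then closes the estimate. The only genuine technical subtlety is to preserve uniformity in both $N$ and $q$ throughout; this rests entirely on Lemma \ref{comp} (uniform compact containment of $H_N^q$) and Proposition \ref{convunif} (the $O(1/N)$ rate), and no new ingredients are required beyond these.
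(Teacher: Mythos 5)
Your proof is correct and follows essentially the same strategy as the paper: it exploits the product structure of the tilted measure, uses Remark \ref{rem:relhzerohuncontinu} together with the uniform estimate $\|H_N^q-\tilde H(q,0)\|\leq C/N$ from Proposition \ref{convunif} to show $h_N^k\approx\tilde h_0(q,0)(\tfrac12-\tfrac kN)>0$ for $k\leq N/2$, and concludes that $V_j$ is upwardly drifted on the first half. The only cosmetic difference is in closing the estimate — you use a second-order Taylor bound $L(h-\lambda)-L(h)\leq-\lambda L'(h)+C_L\lambda^2$ and sum the arithmetic progression directly, whereas the paper uses the convexity inequality $L(h-\lambda)-L(h)\leq-\lambda L'(h-\lambda)$ together with a case split ($j\leq N/4$ versus $N/4\leq j\leq N/2$) and an index set $\Gamma$ of "bad" terms, but the underlying mechanism is identical.
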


\begin{proof}
Under the tilted law (see \ref{definH}) the increments $U_i=V_i-V_{i-1}$ are
still independent but no more identically distributed. For any positive $\lambda$ we have
\begin{equation}\label{egals}
\log\mathbf{E}_{N,{\bf h}_N^{q}}\bigl[e^{-\lambda V_j}\bigr]=\sum_{1\leq i\leq j}\bigl(\Ll(-\lambda+h_N^i)-\Ll(h_N^i)\bigr)
\end{equation}
with $h_N^i:=(1-\frac{i}{N})h^q_{N,0}+h^q_{N,1}$. By Remark \ref{rem:relhzerohuncontinu}, we know that for all
$q>0$ and $N\geq 2$,
\be{masubr}
h_{N,0}^q (1-\tfrac{1}{N})=-2 h_{N,1}^q>0.
\ee 
A straightforward consequence of \eqref{masubr} is that   $h_N^i\geq 0$  for all $i\leq N/2$. 
Then, the convexity of
$\Ll(\cdot)$ and the fact that $\Ll(0)=\Ll'(0)=0$ yield that there exists a $c>0$ so that for all  $i\leq N/2$ and $\lambda$ small enough
\be{tresf}
\Ll(-\lambda+h_N^i)-\Ll(h_N^i)\leq \Ll(-\lambda)\leq c \lambda^2.
\ee

We established in Proposition \ref{convunif} the existence of   $C>0$ and $N_0\in \N$ such that for all $N\geq N_0$, and every $q\in[q_1,q_2]$, we have
\begin{equation}\label{ecart2}
\bigl\|{\bf h}_N^{q}-\tilde {\bf h}(q,0)\bigr\|\leq\tfrac{C}{N}.
\end{equation} 
Thanks to Lemma \ref{diffeo} and Remark \ref{rem:relhzerohuncontinu}, there exists a constant $R>0$ such that
\begin{equation}\label{ecart}
\tilde{h}_0(q,0)\geq R>0\quad\forall q\in[q_1,q_2].
\end{equation}
Thus, provided  $N_0$ is chosen large enough, we deduce from \eqref{ecart2} and \eqref{ecart} that $h_{N,0}^q\geq R/2$ for $N\geq N_0$
and $q\in [q_1,q_2]$.
Moreover, thanks to \eqref{masubr}, we also write $h_N^i\geq \frac14 h_{N,0}^q$  for $i\leq N/4$ such that finally
$h_N^i\geq R/8$ for $i\leq N/4$.
Observe that by convexity of $\Ll(.)$,
\begin{equation}
\sum_{1\leq i\leq j}\bigl(\Ll(-\lambda+h_N^i)-\Ll(h_N^i)\bigr)\leq -\lambda\sum_{1\leq i\leq j}\Ll'(-\lambda+h^i_N).
\end{equation}
Hence, for $j\leq N/4$ and for $\lambda\leq R/16$ we have 
\begin{equation}\label{ecart3}
\sum_{1\leq i\leq j}\bigl(\Ll(-\lambda+h_N^i)-\Ll(h_N^i)\bigr)\leq -\lambda j \Ll'(\tfrac{R}{16}).
\end{equation}
For $N/4\leq j \leq N/2$ in turn we split the sum in the l.h.s. of \eqref{ecart3} into a sum over 
$i\leq N/4$ (that is dealt with as in \eqref{ecart3}) and a sum over $i\geq N/4$ (that is dealt with by using \eqref{tresf}). 
Thus, 
\begin{equation}\label{tsty}
\sum_{1\leq i\leq j}\bigl(\Ll(-\lambda+h_N^i)-\Ll(h_N^i)\bigr)=-\lambda \tfrac{N}{4} \Ll'(\tfrac{R}{16})+c (j-\tfrac{N}{4}) \lambda^2
\leq \tfrac{N}{4} \big(c \lambda^2-\lambda  \Ll'(\tfrac{R}{16})\big).
\end{equation}
It remains to choose $\lambda>0$ small enough to make sure that $c \lambda^2-\lambda  \Ll'(R/16)>0$ and then, 
\eqref{ecart3} and \eqref{tsty} complete the proof.

\end{proof}

The next lemma ensures that we can restrict ourselves to $j\leq N/2$.
\begin{lemma}\label{lem:symetrie}
For $a\in\mathbb{R}$ and $j\in\{1,\ldots,N\}$
\begin{equation}
\mathbf{P}_\beta(V_j\leq a,\,Y_N=Nq,\,V_N=0)=\mathbf{P}_\beta(V_{N-j}\leq a,\,Y_N=Nq,\,V_N=0).
\end{equation}
\end{lemma}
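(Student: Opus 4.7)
My plan is to use a time-reversal-and-negation argument that exploits the symmetry of the geometric increment law \eqref{lawP}. Specifically, I would introduce the auxiliary walk
\begin{equation*}
V^*_i := V_{N-i} - V_N, \qquad 0\leq i\leq N,
\end{equation*}
which satisfies $V^*_0 = 0$ and whose increments are $V^*_i - V^*_{i-1} = -v_{N+1-i}$. Since under $\mathbf{P}_\beta$ the $v_i$ are i.i.d.\ with a symmetric law, the sequence $(-v_{N+1-i})_{i=1}^N$ has the same joint distribution as $(v_i)_{i=1}^N$, so $V^* \stackrel{d}{=} V$ under $\mathbf{P}_\beta$. This is analogous to the reversal trick used in Section \ref{sec:chbeta} to obtain \eqref{inegi}.

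The next step is to check that, on the event $\{V_N = 0\}$, the transformation $V \mapsto V^*$ maps the event on the right-hand side of Lemma \ref{lem:symetrie} onto the one on the left. Indeed, on $\{V_N = 0\}$ one has $V^*_i = V_{N-i}$ for all $i$, so in particular $V^*_j = V_{N-j}$ and $V^*_N = 0$. For the arithmetic mean one writes
\begin{equation*}
{\textstyle NY^*_N = \sum_{i=0}^{N-1} V^*_i = \sum_{i=0}^{N-1}(V_{N-i} - V_N) = \sum_{k=1}^{N}V_k - NV_N = \sum_{k=0}^{N-1}V_k = NY_N,}
\end{equation*}
where the last equality uses $V_0 = V_N = 0$. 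This yields the pointwise identity of events
\begin{equation*}
\{V_{N-j}\leq a,\,Y_N=Nq,\,V_N=0\} = \{V^*_j\leq a,\,Y^*_N=Nq,\,V^*_N=0\}.
\end{equation*}

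Taking $\mathbf{P}_\beta$-probabilities of both sides and applying the distributional equality $V^*\stackrel{d}{=}V$ to the right-hand side gives exactly the claim. I do not foresee any real obstacle; the only mild care needed is to keep track of the constraint $V_N = 0$, which is what makes both $V^*_N=0$ and $Y^*_N=Y_N$ hold and is the reason the event $\{V_N=0\}$ appears symmetrically on both sides of the identity.
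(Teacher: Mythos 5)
Your proof is correct and takes essentially the same approach as the paper: the paper also invokes time reversal $(V_N-V_{N-j})_{0\leq j\leq N}\overset{d}{=}(V_j)_{0\leq j\leq N}$ followed by the sign symmetry $(-V_j)_j\overset{d}{=}(V_j)_j$, which is exactly what your single transformation $V^*_i=V_{N-i}-V_N$ bundles together (its increments are $-v_{N+1-i}$, so equality in law with $V$ uses both i.i.d.-ness and symmetry). The bookkeeping on the event $\{V_N=0\}$ and the verification $Y^*_N=Y_N$ are carried out correctly.
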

\begin{proof}
We just need to use time reversal, i.e.,
\begin{equation}
(V_N-V_{N-j},\,0\leq j\leq N)\overset{d}{=}(V_j,\,0\leq j\leq N),
\end{equation}
to obtain that
\begin{equation}
\mathbf{P}_\beta(V_j\leq a,\,Y_N=Nq,\,V_N=0)=\mathbf{P}_\beta(-V_{N-j}\leq-a,\,-Y_N=Nq,\,V_N=0).
\end{equation}
By using the symmetry of $V$, we complete the proof:
\begin{equation}
(-V_j,\,0\leq j\leq N)\overset{d}{=}(V_j,\,0\leq j\leq N).
\end{equation}
\end{proof}

At this stage, we need to use precise results for the local central
limit theorem. We recall \eqref{revt} and for convenience we use the
notations 
\begin{equation}\label{eq:notalphaxi}
\alpha_N^q:=\mathbf{P}_{N,{\bf
    h}_N^{q}}(NY_N=N^2q,\,V_N=0) \quad \text{and}\quad
\xi_N^q:=\exp{(\Ll_{\Lambda_N}({\bf h}_N^{q})-Nh^q_{N,0}\,q)}\,.
\end{equation}
Hence, we have
\begin{equation}
\mathbf{P}_\beta(Y_N=Nq,\,V_N=0)=\xi_N^q\alpha_N^q.
\end{equation}
We can handle $\alpha_N^q$ with the help of Proposition \ref{lem:bHtilde}: there exists a $C_2>0$ such that 
\begin{equation}\label{eq:balpha}
\tfrac{1}{C_2}\tfrac{1}{N^2}\leq\alpha_N^q\leq\tfrac{C_2}{N^2}.
\end{equation}
Proposition \ref{convunif} allows us to write that there exists a positive constant $C_3$ so that
\begin{equation}\label{eq:bbeta}
e^{-C_3}e^{N(\Ll_{\Lambda}(\tilde {\bf h}(q,0))-\tilde h_0(q,0)\,q)}
\leq\xi^q_N\leq e^{C_3}e^{N(\Ll_{\Lambda}(\tilde {\bf h}(q,0))-\tilde h_0(q,0)\,q)}.
\end{equation}
We can state that
\begin{lemma}\label{lem:ub:vi}
There exists a constant $\lambda>0$ such that for all $a>0, q\in[q_1,q_2], N\geq N_0$ and $0\leq j\leq N$
\begin{equation}
\mathbf{P}_\beta(V_j\leq-a,\,Y_N=Nq,\,V_N=0)\leq\xi_N^q\,C'e^{-C_1(j\wedge(N-j))-\lambda a}.
\end{equation}
\end{lemma}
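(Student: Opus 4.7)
The plan is to combine the tilting identity with the exponential moment bound of Lemma \ref{lem:deflambda} and the time-reversal symmetry of Lemma \ref{lem:symetrie}. The key point is that Lemma \ref{lem:deflambda} only provides control for indices $j \leq N/2$, so the case $j > N/2$ must be reduced to the other regime via symmetry.

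First, I would rewrite the probability under $\mathbf{P}_\beta$ in terms of the tilted law $\mathbf{P}_{N,H_N^q}$. Since the event $\{V_j \leq -a,\,Y_N=Nq,\,V_N=0\}$ is contained in $\{\Lambda_N=(Nq,0)\}$, the Radon-Nikodym derivative in \eqref{definH} is constant on this event and equals $\exp(Nh_{N,0}^q q - L_{\Lambda_N}(H_N^q)) = 1/\beta_N^q$. Therefore
\begin{equation}
\mathbf{P}_\beta(V_j\leq-a,\,Y_N=Nq,\,V_N=0) = \beta_N^q\, \mathbf{P}_{N,H_N^q}(V_j\leq-a,\,Y_N=Nq,\,V_N=0).
\end{equation}
Dropping the constraint $\{Y_N=Nq,\,V_N=0\}$ and applying Markov's inequality to $e^{-\lambda V_j}$ yields
\begin{equation}
\mathbf{P}_{N,H_N^q}(V_j\leq-a,\,Y_N=Nq,\,V_N=0) \leq e^{-\lambda a}\, \mathbf{E}_{N,H_N^q}\bigl[e^{-\lambda V_j}\bigr],
\end{equation}
with $\lambda$ the constant produced by Lemma \ref{lem:deflambda}.

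Second, for indices $j \leq N/2$, I would invoke Lemma \ref{lem:deflambda} directly to bound the right-hand side by $C'e^{-C_1 j - \lambda a}$, which is precisely $C'e^{-C_1(j\wedge(N-j))-\lambda a}$ in that range. For indices $N/2 \leq j \leq N$, I would first apply Lemma \ref{lem:symetrie} (valid for any $a \in \mathbb{R}$, hence also for $-a$) to write
\begin{equation}
\mathbf{P}_\beta(V_j\leq-a,\,Y_N=Nq,\,V_N=0) = \mathbf{P}_\beta(V_{N-j}\leq-a,\,Y_N=Nq,\,V_N=0),
\end{equation}
and then apply the $j \leq N/2$ bound with $N-j$ in place of $j$, obtaining the factor $e^{-C_1(N-j)-\lambda a}$. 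Combining the two ranges gives the claimed exponent $C_1(j\wedge(N-j))+\lambda a$.

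There is no real obstacle here beyond bookkeeping: the two preliminary lemmas do all the heavy lifting. The one point that deserves a line of justification is that the tilting identity applies to the restricted event, which follows since the Radon--Nikodym factor only depends on $(Y_N,V_N)$ and is therefore constant on any subset of $\{\Lambda_N=(Nq,0)\}$; once this is observed, the reduction to $\mathbf{P}_{N,H_N^q}$ is immediate and the rest is a routine exponential Chebyshev argument together with the symmetry reduction.
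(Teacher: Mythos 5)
Your proof is correct and follows the same route as the paper: exponential Chebyshev plus Lemma~\ref{lem:deflambda} after passing to the tilted law, with Lemma~\ref{lem:symetrie} handling $j>N/2$. The only difference is bookkeeping order — the paper invokes the symmetry reduction first (WLOG $j\leq N/2$) and then tilts, whereas you tilt first and split into the two ranges of $j$ at the end; the substance is identical.
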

\begin{proof}
By the symmetry in Lemma \ref{lem:symetrie}, we can without loss of generality assume $j\leq N/2$. By using Lemma~\ref{lem:deflambda}, we can write
\begin{align*}
\mathbf{P}_\beta(V_j\leq -a,\,Y_N=Nq,\,V_N=0)&\leq\mathbf{E}_\beta\bigl[e^{-\lambda V_j},\,Y_N=Nq,\,V_N=0\bigr]e^{-\lambda a}\\
&=\xi_N^q e^{-\lambda a}\mathbf{E}_{N,{\bf h}_N^{q}}\bigl[e^{-\lambda V_j},\,Y_N=Nq,\,V_N=0\bigr]\\
&\leq\xi_N^q e^{-\lambda a}\mathbf{E}_{N,{\bf h}_N^{q}}\bigl[e^{-\lambda V_j}\bigr]\leq\xi_N^q\,C' e^{-C_1 j-\lambda a}.
\end{align*}
\end{proof}

\begin{proof}[Proof of Proposition \ref{prop:comppos}]
Let $u_N=\lfloor\nu\log N\rfloor$ where $\nu>0$ will be chosen afterward. The first step is to write
\begin{multline}\label{1st}
\mathbf{P}_\beta(V_i>0,0<i<N;\,NY_N=N^2q,\,V_N=0)\geq\\
\mathbf{P}_\beta(V_1=V_{N-1}=u_N,\,V_i>0,2<i<N-2;\,NY_N=N^2q,\,V_N=0).
\end{multline}
By using  Markov's property at time $1$ and $N-1$, we obtain
\begin{multline}\label{eq:1bound}
\mathbf{P}_\beta(V_1=V_{N-1}=u_N,\,V_i>0,2<i<N-2;\,NY_N=N^2q,\,V_N=0)\\
=
\mathbf{P}_\beta(U_1=u_N)^2\,\mathbf{P}_\beta(V_i>-u_N,1<i<N-3;\,(N-2)Y_{N-2}=N^2q-(N-1)u_N,\,V_{N-2}=0).
\end{multline}
We shall use a basic  lower bound
\begin{multline}\label{trbou}
\mathbf{P}_\beta(V_i>-u_N,1<i<N-3;\,(N-2)Y_{N-2}=N^2q-(N-1)u_N,\,V_{N-2}=0)\\
\geq\;
\mathbf{P}_\beta ((N-2)Y_{N-2}=N^2q-(N-1)u_N,\,V_{N-2}=0)\\
-
 \sum_{i=1}^{N-3}\mathbf{P}_\beta(V_i\leq-u_N,\,(N-2)Y_{N-2}=N^2q-(N-1)u_N,\,V_{N-2}=0).
\end{multline}
We take care of the second term  by letting $q'=\frac{N^2 q -(N-1) u_N}{(N-2)^2}$ in Lemma~\ref{lem:ub:vi}
\begin{multline}\label{eq:2bound}
\sum_{i=1}^{N-3}\mathbf{P}_\beta(V_i\leq-u_N,\,(N-2)Y_{N-2}=(N-2)^2 q',\,V_{N-2}=0)
\\ \leq
\xi_{N-2}^{q'}\sum_{i=1}^{N-3}C' e^{-C_1(i\wedge(N-2-i))-\lambda u_N}\leq C_4\xi_{N-2}^{q'}\,e^{-\lambda u_N}.
\end{multline}
Observe that thanks to the notations~\eqref{eq:notalphaxi} we can write the first term in the r.h.s. of \eqref{trbou} as
\begin{equation}\label{eq:3bound}
\mathbf{P}_\beta\bigl((N-2)Y_{N-2}=(N-2)^2 q',\,V_{N-2}=0\bigr)={\xi}_{N-2}^{q'}{\alpha}_{N-2}^{q'}\,.
\end{equation}

Hence,
\begin{multline}
\mathbf{P}_\beta(V_i>0,0<i<N;\,NY_N=N^2q,\,V_N=0)
\\ \geq
\mathbf{P}_\beta(U_1=u_N)^2 \xi_{N-2}^{q'} \biggl[\alpha_{N-2}^{q'} -C_4\,e^{-\lambda
  u_N}\biggr].
\end{multline}
Observe that
\begin{equation}
\mathbf{P}_\beta(U_1=u_N)^2=\tfrac{1}{c_\beta^2} e^{-\beta \floor{\nu
    \log N}}\ge \tfrac{1}{c_\beta^2}N^{-\beta\nu}
\end{equation}
and recall that
$\mathbf{P}_\beta(NY_N=N^2q,\,V_N=0)=\xi_N^q\alpha_N^q$.  Therefore
\begin{equation}\label{tib}
\mathbf{P}_\beta\bigl(V_i>0,0<i<N\mid NY_N=N^2q,\,V_N=0\bigr) 
\geq\frac{N^{-\beta\nu}}{c_\beta^2} \frac{\xi_{N-2}^{q'}}{\xi^q_N}
\frac{\alpha_{N-2}^{q'} -C_4\,e^{-\lambda u_N}}{\alpha^q_N} 
\end{equation}
We take care of the last factor with the help of the bound
\eqref{eq:balpha}
\begin{equation}\label{minorationalphaqprime}
  \frac{\alpha_{N-2}^{q'} -C_4\,e^{-\lambda u_N}}{\alpha^q_N}  \ge
  \frac{1}{C_2 N^2} \etp{\frac{1}{C_2 (N-2)^2} - C_5 e^{-\lambda u_N}}
  \ge C_5 N^{-4} 
\end{equation}
for $N$ large, by choosing $\nu > \frac{2}{\lambda}$.
For the second factor, we use the bound \eqref{eq:bbeta}, and the
Lipschitz nature of $\Ll$ and $\tilde{\mathbf{h}}$ on a compact set, and the fact that
$\valabs{q-q'} \le C_6 \frac{\log N}{N}$,
\begin{multline}\label{minorationxiqprime}
  \frac{\xi_{N-2}^{q'}}{\xi^q_N} \ge e^{-2 C_3}
  \exp\etp{N\etp{\etc{\Llam(\tilde{\mathbf{h}}(q',0)) -
        \tilde{h}_0(q',0)q'}
- \etc{\Llam(\tilde{\mathbf{h}}(q,0)) -
        \tilde{h}_0(q,0)q}}} \\
\ge  e^{-2 C_3} e^{-N C_7 \valabs{q-q'}} \ge e^{-2 C_3} e^{- C_7 C_6
  \log N} \ge C_8 N^{-C_9}
\end{multline}
Eventually, combining \eqref{tib}, \eqref{minorationalphaqprime} and
\eqref{minorationxiqprime}, we obtain the lower bound, for $\mu=4+C_9 + \beta \nu$ and
$C>0$ a constant
$$\mathbf{P}_\beta\bigl(V_i>0,0<i<N\mid NY_N=N^2q,\,V_N=0\bigr) \ge C N^{-\mu}\,.$$
\end{proof}

\appendix

\section{Equivalence between Theorem \ref{Theo-shape} and Theorems \ref{Prop5} and \ref{Convenv}}\label{equiv:thDEF}
Assume that  Theorem \ref{Prop5} and \ref{Convenv} hold.  We begin by observing that 
\begin{align}\label{altthD2}
\nonumber d_H\Big(\tfrac{S_L(l)}{\sqrt{L}},\cS_\beta\Big)&\leq \tfrac{N_L(l)}{\sqrt L} d_H\Big( \tfrac{S_L(l)}{N_L(l)},\tfrac{\sqrt{L}}{N_L(l)} \cS_\beta\Big)\\
&\leq \tfrac{N_L(l)}{\sqrt L} d_H\Big( \tfrac{S_L(l)}{N_L(l)},\tfrac{\cS_\beta}{a_\beta}\Big)+ \tfrac{N_L(l)}{\sqrt L} d_H\Big(\tfrac{\cS_\beta}{a_\beta},\tfrac{\sqrt{L}}{N_L(l)} \cS_\beta\Big). 
\end{align}
Theorem \ref{Prop5}, and the inequality $d_H\big(\tfrac{\cS_\beta}{a_\beta},\tfrac{\sqrt{L}}{N_L(l)} \cS_\beta\big)  \leq
C\, |\tfrac{\sqrt{L}}{N_L(l)} -\frac{1}{a_\beta}|$ ($C$ is the radius of a ball containing $\cS_\beta$) ensure that the second term in the r.h.s. of \eqref{altthD2} converges to $0$ in  $P_{L,\beta}$ probability. The same convergence holds for the first term in the r.h.s. of \eqref{altthD2}  and this is a consequence of Theorem \ref{Convenv} and of the inequality
$$d_H\Big(\tfrac{S_L(l)}{N_L(l)},\tfrac{\cS_\beta}{a_\beta}\Big)\leq \max\{\big\|\tilde\cE^+_{l}-\tfrac{\gamma^*_{\beta}}{2}\big\|_{\infty},\big\|\tilde\cE^-_{l}+\tfrac{\gamma^*_{\beta}}{2}\big\|_{\infty}  \}+\tfrac{1}{N_L(l)}.$$
Thus, Theorem \ref{Theo-shape} is a consequence of Theorems   \ref{Prop5} and \ref{Convenv}. Using similar arguments, we can prove that  Theorems \ref{Prop5} and \ref{Convenv} are implied by Theorem \ref{Theo-shape} but we do not give the details here.

\section{Proof of Lemma \ref{ineEarea}}\label{appA}
\begin{proof}
Since $V$ and $A_n$ are symmetric, we can assume that $x,x'\in\mathbb{N}_0:=\mathbb{N}\cup\{0\}$ and thus it is sufficient to show that the result holds for $x'=x+1$. We will argue by induction. Since $A_0=0$, the $m=0$ case is trivial. Now, we assume that the inequality holds true for $m\in\mathbb{N}$. We consider the partition function of size $m+1$, and we can decompose it with respect to the position of $V_1$, i.e.,
\begin{align}\label{eq:expEarea}
\nonumber\mathbf{E}_{\beta,x}\bigl(e^{-\delta A_{m+1}}\bigr)&=\sum_{y\in\mathbb{Z}}\mathbf{E}_{\beta,x}\bigl(e^{-\delta (|y|+|V_2|+\ldots+|V_{m+1}|)}\mathbf{1}_{\{V_1=y\}}\bigr)\\
\nonumber&=\sum_{y\in\mathbb{Z}}\mathbf{P}_{\beta}(U_1=y-x)e^{-\delta|y|}\mathbf{E}_{\beta,y}\bigl(e^{-\delta A_m}\bigr)\\
&=\sum_{y\in\mathbb{N}}R_x(y)e^{-\delta y}\mathbf{E}_{\beta,y}\bigl(e^{-\delta A_m}\bigr)+\mathbf{P}_{\beta}(U_1=x)\mathbf{E}_{\beta}\bigl(e^{-\delta A_m}\bigr),
\end{align}
where $R_x(y)=\mathbf{P}_{\beta}(U_1=y-x)+\mathbf{P}_{\beta}(U_1=-y-x)$. Then, we set $\bar{R}_x(y)=\sum_{y'\geq y}R_x(y')$ for $y\in\mathbb{N}$. Since $\bar{R}_x(1)+\mathbf{P}_{\beta}(U_1=x)=1$, we can rewrite the right hand side in \eqref{eq:expEarea} as 
\begin{multline}\label{eq:expEarea2}
\mathbf{E}_{\beta,x}\bigl(e^{-\delta A_{m+1}}\bigr)=\sum_{y\in\mathbb{N}}\bar{R}_x(y)\Bigl[e^{-\delta y}\mathbf{E}_{\beta,y}\bigl(e^{-\delta A_m}\bigr)-e^{-\delta (y-1)}\mathbf{E}_{\beta,(y-1)}\bigl(e^{-\delta A_m}\bigr)\Bigr]
+\mathbf{E}_{\beta}\bigl(e^{-\delta A_m}\bigr).
\end{multline}
We will show that, for all $y\in\mathbb{N}$, the function $x\mapsto\bar{R}_{x}(y)$ is non-decreasing on $\mathbb{N}_0$. First, if $y\geq x+1$, we obviously have
\begin{equation}
\bar{R}_{x}(y)=\sum_{y'\geq y}R_x(y')\leq\sum_{y'\geq y}R_{x+1}(y')=\bar{R}_{x+1}(y).
\end{equation}
Then, if $1\leq y\leq x$, since
\begin{equation}
\bar{R}_x(y)+\sum_{y'=1}^{y-1}R_x(y')+\mathbf{P}_{\beta}(U_1=x)=\bar{R}_{x+1}(y)+\sum_{y'=1}^{y-1}R_{x+1}(y')+\mathbf{P}_{\beta}(U_1=x+1)=1,
\end{equation}
and
\begin{equation}
\mathbf{P}_{\beta}(U_1=x)+\sum_{y'=1}^{y-1}R_x(y')\geq\mathbf{P}_{\beta}(U_1=x+1)+\sum_{y'=1}^{y-1}R_{x+1}(y'),
\end{equation}
we immediately obtain $\bar{R}_x(y)\leq \bar{R}_{x+1}(y)$.
Coming back to \eqref{eq:expEarea2}, we use the induction hypothesis to claim that 
\begin{equation}
e^{-\delta y}\mathbf{E}_{\beta,y}\bigl(e^{-\delta A_m}\bigr)-e^{-\delta (y-1)}\mathbf{E}_{\beta,(y-1)}\bigl(e^{-\delta A_m}\bigr)\leq 0, \quad y\in \N,
\end{equation}
which, together with the monotonicity of $x\mapsto\bar{R}_{x}(y)$ yields that
\begin{align*}
\mathbf{E}_{\beta,x}\bigl(e^{-\delta A_{m+1}}\bigr)&\geq\sum_{y\in\mathbb{N}}\bar{R}_{x+1}(y)\Bigl[e^{-\delta y}\mathbf{E}_{\beta,y}\bigl(e^{-\delta A_m}\bigr)-e^{-\delta (y-1)}\mathbf{E}_{\beta,(y-1)}\bigl(e^{-\delta A_m}\bigr)\Bigr]+\mathbf{E}_{\beta}\bigl(e^{-\delta A_m}\bigr)\\
&=\mathbf{E}_{\beta,x+1}\bigl(e^{-\delta A_{m+1}}\bigr).
\end{align*}
\end{proof}

\bibliographystyle{amsplain}
\bibliography{cnp}

\end{document}